\newtheorem{theorem}{Theorem}[section]
\newtheorem{lemma}[theorem]{Lemma}
\newtheorem{corollary}[theorem]{Corollary}
\newtheorem{proposition}[theorem]{Proposition}
\theoremstyle{remark}
\theoremstyle{definition}
\newtheorem{definition}[theorem]{Definition}
\numberwithin{equation}{section}
  \DeclareMathOperator{\Kdb}{{\mathbb K}}
\DeclareMathOperator{\Cdb}{{\mathbb C}}
\DeclareMathOperator{\Ndb}{{\mathbb N}}
\begin{document}

\title{Morita equivalence of dual operator algebras}

\author[David P. Blecher]{David P. Blecher*}
\address{Department of Mathematics, University of Houston, Houston, TX
77204-3008}
\email{dblecher@math.uh.edu}
\author{Upasana Kashyap}
\address{Department of Mathematics, University of Houston,
Houston, TX  77204-3008} \email{upasana@math.uh.edu}
\date{\today}
\thanks{*Supported by grant DMS 0400731 from the National Science Foundation}

\keywords{$W^{*}$-algebra; Operator algebra; Dual operator algebra;
 Dual operator module; Morita equivalence}

\begin{abstract}
We consider notions of Morita equivalence
appropriate to weak* closed algebras of Hilbert space operators.
We obtain new variants, appropriate to the
dual algebra setting, of the basic theory of
strong Morita equivalence, and new
nonselfadjoint analogues of aspects of
Rieffel's $W^*$-algebraic Morita equivalence.
\end{abstract}

\maketitle

\section{Introduction and notation}

By definition, an {\em operator algebra} is a subalgebra of
  $B(H)$, the bounded operators on a Hilbert space $H$, which
 is closed in the norm
topology.  It is a {\em dual algebra} if it is closed in
 the weak* topology
(also known as the $\sigma$-weak topology).  In
  \cite{BMP},  the first author, Muhly, and Paulsen
generalized Rieffel's strong Morita equivalence of $C^*$-algebras,
to general operator algebras.  At that time however, we were not
clear about how to generalize Rieffel's variant for $W^*$-algebras
\cite{RF1}, to dual operator algebras.  Recently, two approaches
have been suggested for this, in \cite{BM2} and \cite{Elef1,Elef2},
each of which reflect (different) important aspects of Rieffel's
$W^*$-algebraic Morita equivalence. For example, the notion
introduced in \cite{Elef1,Elef2} is equivalent to the very important
notion of (weak*) `stable isomorphism' \cite{EP}.   The fact
remains, however, that  neither approach seems
able to treat certain other very important examples, such as the
second dual of a strong Morita equivalence. In the present paper we
examine a framework, part of which was suggested at the end of
\cite{BM2},
which does include all examples hitherto considered,
and which represents an important and
natural framework for the Morita equivalence of dual algebras.
It is also one to which all the relevant parts of the earlier theory of 
strong Morita equivalence (from e.g.\ \cite{BMP,BMN})
transfers in a very clean manner, indeed which may in some sense be summarized 
as `just changing the tensor product involved'.  In addition, it
may be easier in some cases to check the criteria for our
variant of Morita equivalence. 
Since many of the ideas and proofs are extremely analogous to those
from our papers on related topics, principally \cite{BMP,DB2,BMN}
and to a lesser extent \cite{DB6,DB3,DB4,BM2}, we will be quite
brief in many of the proofs. That is, we assume that the reader is a
little familiar with these earlier ideas and proof techniques, and
will often merely indicate the modifications to weak* topologies. A
more detailed exposition will be presented in the second authors
Ph.\ D.\ thesis \cite{UK}, along with many other related results.

In Section 2, we develop some basic tensor product properties which
we shall need.  In Section 3, we define our variant of Morita equivalence, and
present some of its consequences.  Section 4 is centered on the
`weak linking algebra',
the key tool for dealing with most aspects
of Morita equivalence, and in Section 5 we prove that if $M$ and
$N$ are weak* Morita equivalent dual operator algebras, then the
von Neumann algebras generated by $M$ and $N$ are Morita equivalent
in Rieffel's $W^*$-algebraic sense.

Turning to notation, if $E, F$ are sets, then $EF$ will mean the
norm closure of the span of products $z w$ for $z \in E,w \in F$. We
will assume that the reader is familiar with basic notions from {\em
operator space theory}, as may be found in any of the current texts
on that subject, e.g.\ \cite{ER1}, and the application of this
theory to operator algebras, as may be found in e.g.\ \cite{BLM}.
We study operator algebras
 from  an operator space point of view.  Thus  an {\em abstract
 operator algebra} $A$ is an operator space and a
 Banach algebra, for which there exists
  a Hilbert space $H$ and a completely isometric
 homomorphism $ \pi : A \rightarrow  B(H)$.

 We will often abbreviate `weak*' to `$w^*$'.
A {\em dual operator algebra} is an operator algebra which is also a
dual operator space. By well
 known duality principles,
 any $w^*$-closed subalgebra of $B(H)$, is a
 dual operator
 algebra.  Conversely, it is known (see e.g.\
 \cite{BLM}), that for any dual
 operator algebra $M$,  there exists a Hilbert
 space $H$ and a $w^*$-continuous completely isometric homomorphism
 $\pi: M \rightarrow B(H)$. In this case, the range $\pi(M)$ is a
 $w^*$-closed subalgebra of $B(H)$, which we may identify with $M$
in every way.
 We take all dual operator algebras to be {\em unital}, that is
we assume they each
possess an identity of norm $1$.  Nondual operator algebras in this
 paper, in contrast, will usually be approximately unital, that is,
 they possess a contractive approximate identity (cai).

 For cardinals or sets $I, J$, we use the symbol $M_{I,J}(X)$ for the
 operator space of $I \times J$
 matrices over $X$, whose `finite submatrices' have uniformly
 bounded norm.  We write $\Kdb_{I,J}(X)$ for the norm closure of
 these `finite submatrices'.   Then $C^w_J(X) = M_{J,1}(X), R^w_J(X) =
 M_{1,J}(X)$, and $C_J(X) = \Kdb_{J,1}(X)$ and $R_J(X) =
 \Kdb_{1,J}(X)$.
We sometimes write $\mathbb{M}_I(X)$ for
$M_{I,I}(X)$.

A  {\em concrete left operator module} over an operator algebra $A$,
 is a subspace $X\subset B(H)$ such that $ \pi(A)X$ $\subset X$ for a
 completely contractive representation $ \pi : A \rightarrow B(H)$. An
 {\em abstract operator  A-module} is an operator space $X$ which is also
 an $A$-module, such that $X$ is completely isometrically isomorphic,
 via an $A$-module map, to a concrete operator $A$-module. Most of the
 interesting modules over operator algebras are operator modules, such
 as Hilbert $C^*$-modules.  Similarly for right modules, or
 bimodules.  
By $_{M} \mathcal{H}$, we will mean the category of completely
contractive normal Hilbert modules over a dual operator algebra $M$.
That is, elements of $_{M} \mathcal{H}$ are pairs $(H, \pi)$, where
$H$ is a (column) Hilbert space (see e.g.\ 1.2.23 in
\cite{BLM}), and $\pi : M \to B(H)$ is a
$w^{*}$-continuous unital completely contractive representation. We
shall call such a map $\pi$ a {\em normal representation} of $M$.
The module action is expressed through the equation $m \cdot \zeta =
\pi(m) \zeta$.  The morphisms are bounded linear transformation
between Hilbert spaces that intertwine the representations, i.e.\ if
$(H_{i}, \pi_{i})$, $i=1,2$, are objects of the category  $_{M}
\mathcal{H}$, then the space of morphisms is defined as:
$B_{M}(H_{1},H_{2})$ = $\{T \in B(H_{1},H_{2}) : T\pi_{1}(m)=
\pi_{2}(m) T $ for all $m \in M\}$.

A {\em concrete dual operator $M$-$N$-bimodule} is a $w^{*}$-closed
 subspace $X$ of $B(K,H)$ such that $\theta(M) X \pi(N)$ $\subset X$, where
 $\theta$ and $\pi$ are normal
 representations of $M$ and $N$ on $H$ and $K$ respectively. An
 {\em abstract
 dual operator $M$-$N$-bimodule} is defined to be a nondegenerate
  operator
 $M$-$N$-bimodule $X$, which is also a dual operator space, such that
 the module actions are separately weak* continuous.  Such
 spaces can be represented completely isometrically as concrete
 dual operator bimodules, and in fact this can be done
 under even weaker hypotheses
 (see e.g.\ \cite{BLM,BM2,ER}).  Similarly for
one-sided modules (the case $M$ or $N$ equals $\Cdb$).
We shall write $_{M}
 \mathcal{R}$ for the category of left dual operator modules over
 $M$. The morphisms in $_{M} \mathcal{R}$ are the $w^*$-continuous
 completely bounded $M$-module maps.
We use standard notation for module mapping spaces, e.g.\
$CB(X,N)_{N}$ (resp.\ $w^*CB(X,N)_N$) are the completely bounded
(resp.\ and $w^*$-continuous) right $N$-module maps $X \to N$.
Any  $H \in \, _{M} \mathcal{H}$ (with its column Hilbert space
structure) is a left dual operator $M$-module.

If $M$ is a  dual operator algebra, then the {\em maximal
$W^*$-cover} $W^*_{\rm max}(M)$ is a $W^*$-algebra containing $M$ as
a $w^*$-closed subalgebra, and which is generated by $M$
as a $W^*$-algebra, and which
has the universal property: any normal
representation $\pi : M \rightarrow B(H)$
extends uniquely to a (unital) normal $*$-representation
$\tilde{\pi} : W^*_{\rm max}(M) \rightarrow B(H)$ (see  \cite{BSo}).
A normal representation $\pi
: M \rightarrow B(H)$ of a dual operator algebra $M$, or the
associated space $H$ viewed as an $M$-module, will be called {\em
normal universal}, if any
other
normal  representation is unitarily equivalent to
the restriction of a `multiple' of $\pi$ to a reducing subspace (see
\cite{BSo}).

\begin{lemma} \label{hi}    A normal representation $\pi
: M \rightarrow B(H)$ of a dual operator algebra $M$ is normal
universal iff its extension $\tilde{\pi}$ to $W^*_{\rm max}(M)$ is
one-to-one.
\end{lemma}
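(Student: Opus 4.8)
The plan is to push the question through the universal property of $\mathcal{N} := W^*_{\rm max}(M)$ and then reduce to the corresponding fact for von Neumann algebras. By that universal property, the assignments $\rho \mapsto \tilde\rho$ (extension) and $\sigma \mapsto \sigma|_M$ (restriction) are mutually inverse bijections between the normal representations of $M$ and the normal unital $*$-representations of $\mathcal{N}$; the relations $\widetilde{\sigma|_M} = \sigma$ and $\tilde\rho|_M = \rho$ both follow from the uniqueness clause. First I would verify that this bijection respects the three ingredients in the definition of \emph{normal universal}. Unitary equivalence transfers: given a unitary $U$ with $U \pi_1(m) U^* = \pi_2(m)$ for all $m \in M$, the map $a \mapsto U \tilde\pi_1(a) U^*$ is a normal $*$-representation of $\mathcal{N}$ extending $\pi_2$, hence equals $\tilde\pi_2$ by uniqueness. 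Multiples transfer because the amplification $\tilde\pi^{(I)}$ is a normal $*$-representation extending $\pi^{(I)}$, so $\widetilde{\pi^{(I)}} = \tilde\pi^{(I)}$.

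The step requiring genuine care, and the one I expect to be the main obstacle, is that \emph{reducing subspaces must be shown to match up}, despite $M$ being nonselfadjoint. Here I would use that, since $M$ generates $\mathcal{N}$ as a $W^*$-algebra and $\tilde\pi^{(I)}$ is normal, the image $\tilde\pi^{(I)}(\mathcal{N})$ is precisely the von Neumann algebra generated by $\pi^{(I)}(M)$. A priori the commutant $\pi^{(I)}(M)'$ is strictly larger than $\tilde\pi^{(I)}(\mathcal{N})'$; the point is that the two agree at the level of \emph{projections}. Indeed, if an orthogonal projection $P$ commutes with every $\pi^{(I)}(m)$, then taking adjoints of $P \pi^{(I)}(m) = \pi^{(I)}(m) P$ shows $P$ also commutes with each $\pi^{(I)}(m)^*$, whence $P$ commutes with the generated von Neumann algebra $\tilde\pi^{(I)}(\mathcal{N})$; the reverse containment is immediate. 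Thus a closed subspace is reducing for $\pi^{(I)}(M)$ iff it is reducing for $\tilde\pi^{(I)}(\mathcal{N})$, and restricting a multiple to such a subspace is again compatible with the extension construction (once more by uniqueness). Combining the three compatibilities yields: $\pi$ is normal universal for $M$ iff $\tilde\pi$ is normal universal for the von Neumann algebra $\mathcal{N}$.

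It then remains to establish the purely $W^*$-algebraic statement that a normal unital $*$-representation $\tilde\pi$ of a von Neumann algebra $\mathcal{N}$ is normal universal iff it is one-to-one, which I would either cite (cf.\ \cite{BSo}) or prove via central supports. If $\tilde\pi$ is faithful, its central support is $1$, which dominates the central support of any normal representation $\tau$; by the structure theory of normal representations, $\tau$ is then unitarily equivalent to a subrepresentation of a multiple of $\tilde\pi$, i.e.\ to the restriction of a multiple to a reducing subspace. Conversely, if $\ker \tilde\pi \neq 0$, then the central support of $\tilde\pi$ is a proper central projection, so the faithful identity representation, whose central support is $1$, cannot be a subrepresentation of any multiple of $\tilde\pi$, and $\tilde\pi$ fails to be normal universal. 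Together with the previous paragraph, this gives the lemma.
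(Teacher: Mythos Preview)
Your argument is correct and more self-contained than the paper's. The paper proves the lemma differently: it simply cites \cite{BSo} for the ($\Leftarrow$) direction, then for ($\Rightarrow$) invokes the existence (from \cite{BSo}) of \emph{some} normal universal $\pi_0$ whose extension is faithful, together with the fact (also from \cite{BSo}) that any two normal universal representations are quasiequivalent; it then observes that quasiequivalence passes to the extensions and that a representation quasiequivalent to a faithful one is faithful.

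Your route avoids quasiequivalence entirely by showing that the bijection $\rho\leftrightarrow\tilde\rho$ preserves unitary equivalence, multiples, and reducing subspaces, so that ``normal universal for $M$'' and ``normal universal for $W^*_{\rm max}(M)$'' correspond. The key observation---that selfadjoint projections in $\pi^{(I)}(M)'$ automatically lie in $\tilde\pi^{(I)}(W^*_{\rm max}(M))'$ because taking adjoints of $P\pi^{(I)}(m)=\pi^{(I)}(m)P$ forces $P$ to commute with $\pi^{(I)}(M)^*$ as well---is exactly the step needed to make reducing subspaces match despite $M$ being nonselfadjoint, and your justification that $\tilde\pi^{(I)}(W^*_{\rm max}(M))$ is the von Neumann algebra generated by $\pi^{(I)}(M)$ is sound. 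What you gain is an argument that does not lean on the quasiequivalence machinery of \cite{BSo}; what the paper gains is brevity, since the heavy lifting has already been done there. The final $W^*$-algebraic step (normal universal $\Leftrightarrow$ faithful via central supports) is standard, as you note.
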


\begin{proof}  The ($\Leftarrow$) direction is stated in \cite{BSo}.
Thus there does exist a normal universal $\pi$ whose extension
$\tilde{\pi}$ to $W^*_{\rm max}(M)$ is one-to-one.
 It is observed
in \cite{BSo} that any other normal universal representation
$\theta$ is
quasiequivalent to  $\pi$.  It follows that the extension
$\tilde{\theta}$ to $W^*_{\rm max}(M)$ is quasiequivalent to
 $\tilde{\pi}$, and it is easy to see from this that
 $\tilde{\theta}$ is one-to-one.
\end{proof}

\section{Some tensor products}

We begin by recalling the definition of the Haagerup
 tensor product. Suppose $X$ and $Y$ are two
 operator spaces. Define $\lVert u \rVert_n $ for $ u \in M_n(X \otimes Y)$
 as:
$$ {\lVert u \rVert}_n  =  \inf\  \lbrace \lVert a \rVert \lVert b
 \rVert \ : \  u =a \odot b, a \in M_{np}(X), b \in M_{pn} (Y), p \in
 \mathbb{N} \rbrace.$$
\noindent Here $ a \odot b $ stands for the $ n \times n $ matrix
whose
 $ i,j $ -entry is $ \sum_{k=1}^{p} {a_{ik} \otimes b_{kj}} $.
The algebraic tensor product $ X \otimes Y$ with this sequence of
 matrix norms is an operator space. The completion of this operator space in
 the above norm is called {\em Haagerup  tensor  product}, and is
 denoted by $ X \otimes_{h} Y $. The completion of an operator space is an
 operator space, hence $X \otimes_{h} Y $ is an operator space.

If $X$ and $Y$ are respectively right and left operator $A$-modules,
 then  the {\em module Haagerup tensor product} $X \otimes_{hA} Y$ is
 defined to be the quotient of   $X \otimes_{h} Y$ by the closure of the
 subspace spanned by terms of the form $xa \odot y$ $-$ $x \odot ay$,
 for $x \in X$, $y \in Y$, $a \in A$.
Let $X$ be a right and $Y$ be a left operator $A$-module where $A$ is
 an operator algebra. We say that a bilinear map $ \psi : X \times Y
 \rightarrow W$ is {\em balanced} if $ \psi (xa,y)= \psi(x,ay)     $ for all $
 x\in X$, $y \in Y$ and $a \in A$. It is well known that the module
 Haagerup tensor product linearizes balanced
 bilinear maps which are completely contractive (or completely bounded)
 in the sense of
 Christensen
 and Sinclair (see e.g.\ 1.5.4 in \cite{BLM}).

 If $X$ and $Y$ are two operator spaces,
 then the {\em extended Haagerup tensor product} $X \otimes_{eh} Y$
 may be defined to be the subspace
 of $(X^* \otimes_{h} Y^*)^{*}$ corresponding to the
 completely bounded bilinear
 maps from $X^* \times Y^* \to \mathbb{C}$ which are
 separately weak$^{*}$-continuous.  If $X$ and $Y$ are
 dual operator spaces, with preduals $X_*$ and $Y_*$,
 then this coincides with the
 $weak^{*}$ {\em Haagerup tensor product} defined earlier in \cite{BS},
 and indeed
 $X \otimes_{eh} Y = (X_* \otimes_{h} Y_*)^*$.  The {\em normal Haagerup tensor product}
 $X \otimes^{\sigma h} Y$ is the
 operator space dual of  $X_* \otimes_{eh} Y_*$.  The
 canonical maps are complete isometries
  $$X \otimes_h Y \to X \otimes_{eh} Y \to X \otimes^{\sigma h} Y
  .$$
See  \cite{ER2} for more details.

\begin{lemma} \label{ballhag}  For any dual
operator spaces $X$ and $Y$,
 ${\rm Ball}(X \otimes_{h} Y)$ is $w^{*}$-dense in
 ${\rm Ball}(X \otimes^{\sigma h} Y)$.
\end{lemma}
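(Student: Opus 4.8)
The plan is to realize the statement as an instance of Goldstine's theorem together with a weak*-continuous quotient map. Write $V = X \otimes_{h} Y$ and $W = X_* \otimes_{eh} Y_*$, so that $X \otimes^{\sigma h} Y = W^*$ by definition. The first step is to unwind the definition of the extended Haagerup tensor product in order to identify $W$ \emph{isometrically} with a closed subspace of $V^* = (X \otimes_{h} Y)^*$. Indeed, since $X = (X_*)^*$ and $Y = (Y_*)^*$, the defining description of $\otimes_{eh}$ exhibits $W = X_* \otimes_{eh} Y_*$ as the subspace of $((X_*)^* \otimes_{h} (Y_*)^*)^* = (X \otimes_{h} Y)^* = V^*$ consisting of those completely bounded bilinear forms on $X \times Y$ that are separately weak*-continuous, carrying precisely the norm inherited from $V^*$. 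Let $j : W \hookrightarrow V^*$ denote this isometric inclusion.

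Next I would observe that the canonical complete isometry $\iota : V \to X \otimes^{\sigma h} Y = W^*$ factors as $\iota = j^* \circ \kappa$, where $\kappa : V \to V^{**}$ is the canonical embedding and $j^* : V^{**} \to W^*$ is the adjoint of $j$. This is a routine check on the pairing: for $u \in V$ and $w \in W$ one has $\langle j^*(\kappa(u)), w\rangle = \langle \kappa(u), j(w)\rangle = \langle w, u\rangle = \langle \iota(u), w\rangle$. Two features of $j^*$ are then relevant. First, $j^*$ is continuous from $\sigma(V^{**}, V^*)$ to $\sigma(W^*, W)$, because for each fixed $w \in W$ the functional $\Phi \mapsto \langle j^*\Phi, w\rangle = \langle \Phi, j(w)\rangle$ is evaluation against the element $j(w) \in V^*$. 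Second, $j^*$ maps ${\rm Ball}(V^{**})$ \emph{onto} ${\rm Ball}(W^*)$: given $f \in {\rm Ball}(W^*)$, a Hahn--Banach extension of $f$ from the subspace $W$ to all of $V^*$ produces $\Phi \in {\rm Ball}(V^{**})$ with $j^*\Phi = f$.

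With these in hand the conclusion follows from Goldstine's theorem, which gives that $\kappa({\rm Ball}(V))$ is $\sigma(V^{**}, V^*)$-dense in ${\rm Ball}(V^{**})$. Applying the weak*-continuous map $j^*$ and using that it carries the bidual ball onto ${\rm Ball}(W^*)$,
$$ {\rm Ball}(X \otimes^{\sigma h} Y) = j^*\bigl(\overline{\kappa({\rm Ball}(V))}^{\,w^*}\bigr) \subseteq \overline{j^*(\kappa({\rm Ball}(V)))}^{\,w^*} = \overline{\iota({\rm Ball}(V))}^{\,w^*}, $$
while the reverse inclusion is immediate since $\iota$ is isometric and ${\rm Ball}(X \otimes^{\sigma h} Y)$ is weak*-compact by Alaoglu. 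This yields the desired density. The only genuinely delicate point is the isometric identification in the first step: one must verify that the intrinsic extended-Haagerup norm on $X_* \otimes_{eh} Y_*$ really is the subspace norm from $(X \otimes_{h} Y)^*$, and not merely a comparable one, since everything downstream depends on $j$ being an isometry. Once that identification is secured, the remainder is the standard Goldstine/quotient-map package.
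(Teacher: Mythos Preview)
Your proof is correct, but it is organized quite differently from the paper's. The paper argues directly by separation: assuming some $x\in{\rm Ball}(X\otimes^{\sigma h}Y)$ lies outside the weak* closure of ${\rm Ball}(X\otimes_h Y)$, it separates by a weak*-continuous functional $\phi$ on $X\otimes^{\sigma h}Y$, reinterprets $\phi$ as a separately weak*-continuous bilinear form on $X\times Y$, and obtains the contradiction $|{\rm Re}\,\phi(x)|\le\|\phi\|\le t<{\rm Re}\,\phi(x)$. Your argument instead reads the situation structurally: since (by the paper's own definition of $\otimes_{eh}$) the predual $W=X_*\otimes_{eh}Y_*$ is \emph{literally} the subspace of $(X\otimes_h Y)^*$ consisting of separately weak*-continuous forms, the canonical map $V\to W^*$ is the restriction map $V^{**}\to W^*$ composed with $\kappa$, and Goldstine plus Hahn--Banach extension finish it. The two approaches are morally the same (Goldstine is itself a separation argument), but yours makes explicit \emph{why} density holds --- namely that $(X\otimes^{\sigma h}Y)_*$ embeds isometrically in $(X\otimes_h Y)^*$ --- which is a useful takeaway. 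Note, incidentally, that the ``delicate point'' you flag at the end is already settled by the paper's definition of $\otimes_{eh}$ as a subspace of $(X\otimes_h Y)^*$, so no extra verification is needed there.
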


\begin{proof}
Let $x$ $\in$ ${\rm Ball}( X \otimes^{\sigma h} Y)$ $\setminus$
 $\overline{{\rm Ball}(X \otimes_{h} Y)}^{w^{*}}$. By the
 geometric Hahn-Banach theorem,
 there exists a  $\phi$ $\in$ $(X \otimes^{\sigma h} Y)_{*}$, and $t$
 $\in$
 $\mathbb{R}$, such that $Re\  \phi(x)$ $>$ $t$ $>$ $Re\  \phi(y)$ for all
 $y$ $\in$ ${\rm Ball}(X \otimes_{h} Y)$. We view $\phi$ as
  a map $\phi : X \otimes_{h} Y \to \mathbb{C}$ corresponding to a
 completely contractive map from $X \times Y \to \mathbb{C}$ which is
 separately $w^{*}$-continuous.  It follows that
 $Re\  \phi(x)$ $>$ $t$ $>$ $\lvert  \phi(y) \rvert$ for all $y$ $\in$
 ${\rm Ball}(X \otimes_{h} Y)$, which implies that
 $\lVert \phi \rVert $ $\leq$ $t$.
 Thus $\lvert Re\ \phi(x) \rvert$ $\leq$ $\lVert \phi \rVert$
 $\lVert x \rVert$ $\leq$ $t$,  which is a contradiction.
\end{proof}

\begin{lemma} \label{asso1}
The normal Haagerup tensor product is associative. That is, if
 $X$,$Y$,$Z$ are dual operator spaces then
 $(X \otimes^{\sigma h} Y) \otimes^{\sigma h}  Z
 = X \otimes^{\sigma h} (Y \otimes^{\sigma h}  Z)$ as
 dual operator spaces.
\end{lemma}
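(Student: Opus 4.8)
The plan is to deduce associativity of $\otimes^{\sigma h}$ from the (known) associativity of the extended Haagerup tensor product $\otimes_{eh}$, by passing to preduals and dualizing. Recall from the discussion preceding the statement that $X \otimes^{\sigma h} Y$ is \emph{defined} to be the operator space dual of $X_* \otimes_{eh} Y_*$. Thus $X \otimes^{\sigma h} Y$ comes equipped with a distinguished predual, namely $(X \otimes^{\sigma h} Y)_* = X_* \otimes_{eh} Y_*$, and this is the predual I would use to iterate the construction. Applying the definition a second time gives
\[
(X \otimes^{\sigma h} Y) \otimes^{\sigma h} Z
= \bigl( (X \otimes^{\sigma h} Y)_* \otimes_{eh} Z_* \bigr)^*
= \bigl( (X_* \otimes_{eh} Y_*) \otimes_{eh} Z_* \bigr)^* ,
\]
and, symmetrically,
\[
X \otimes^{\sigma h} (Y \otimes^{\sigma h} Z)
= \bigl( X_* \otimes_{eh} (Y_* \otimes_{eh} Z_*) \bigr)^* .
\]

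Having reduced to the preduals, I would next invoke the associativity of the extended Haagerup tensor product for the operator spaces $X_*$, $Y_*$, $Z_*$, i.e.\ the complete isometry $(X_* \otimes_{eh} Y_*) \otimes_{eh} Z_* = X_* \otimes_{eh} (Y_* \otimes_{eh} Z_*)$, which is part of the Effros--Ruan theory (see \cite{ER2}) and which is implemented by the canonical map acting as the identity on the algebraic triple tensor product $X_* \otimes Y_* \otimes Z_*$. Since the operator space dual functor sends a complete isometric isomorphism of operator spaces to a complete isometric isomorphism of their duals, dualizing this identification and combining it with the two displays above yields the desired complete isometry between $(X \otimes^{\sigma h} Y) \otimes^{\sigma h} Z$ and $X \otimes^{\sigma h} (Y \otimes^{\sigma h} Z)$. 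A final check that the resulting map is the identity on the algebraic tensor product $X \otimes Y \otimes Z$ lets one genuinely call the two spaces equal, and because both are duals of the identified preduals the isomorphism is automatically a weak*-homeomorphism as well.

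The main obstacle I anticipate is entirely bookkeeping at the level of preduals: one must be careful that the canonical predual of $X \otimes^{\sigma h} Y$ really is $X_* \otimes_{eh} Y_*$ (so that forming the second $\otimes^{\sigma h}$ against $Z$ is the operation intended in the statement), and that the associativity of $\otimes_{eh}$ is being applied to general operator spaces rather than only dual ones. Should a more self-contained argument be preferred, an alternative is to work directly: the ordinary Haagerup tensor product is associative, so the canonical map is the identity on $X \otimes Y \otimes Z$; one then iterates Lemma \ref{ballhag} to see that $X \otimes_h Y \otimes_h Z$ sits weak*-densely in the unit balls of both triple normal products, and upgrades the isometry on this dense set to the full statement via a Krein--Smulian/weak*-continuity argument. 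This route is more hands-on but messier, so I would present the duality argument as the primary proof.
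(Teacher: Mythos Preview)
Your proposal is correct and follows exactly the approach of the paper: the paper's proof is the one-line remark that associativity follows from the definition of $\otimes^{\sigma h}$ together with the associativity of the extended Haagerup tensor product from \cite{ER2}, which is precisely what you spell out in detail. Your added remarks about the predual bookkeeping and the alternative density/Krein--Smulian route are fine but go beyond what the paper records.
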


\begin{proof}
This follows by the definition of the normal Haagerup tensor product
and using
 associativity of the extended Haagerup tensor product (e.g.\ see
 \cite{ER2}).
\end{proof}

 We now turn to the module version of the normal Haagerup tensor
 product, and review some facts from \cite{EP}.
 Let $X$ be a right dual operator $M$-module and $Y$ be a left dual operator
 $M$-module.   Let $(X \otimes_{h M} Y)_{\sigma}^{*}$  denote
 the subspace of $(X \otimes_{h} Y)^{*}$ corresponding to the
 completely bounded
 bilinear maps from $\psi : X \times Y \to \mathbb{C}$ which are
 separately weak$^{*}$-continuous
 and $M$-balanced (that is, $\psi(xm,y) = \psi(x,my))$. Define the
 {\em module normal Haagerup tensor product}
 $X \otimes^{\sigma h}_{M} Y$ to be the operator space dual of
 $(X \otimes_{h M} Y)_{\sigma}^{*}$. Equivalently,
 $X \otimes^{\sigma h}_{M}Y$ is the
 quotient of $X \otimes^{\sigma h} Y$ by the weak$^{*}$-closure of the
 subspace spanned by terms of the form $xm \otimes y$ $-$ $x \otimes my$,
 for $x \in X$, $y \in Y$, $m \in M$. The module normal Haagerup tensor
 product linearizes completely contractive, separately
 weak$^{*}$-continuous,
 balanced bilinear maps (see  \cite[Proposition 2.2]{EP}).
The canonical map $X \to Y \to X \otimes^{\sigma h} Y$ is such a map.

\begin{lemma} \label{E}
Let $X_{1}, X_{2}, Y_{1}, Y_{2}$ be dual operator spaces. If $u :
X_{1} \to Y_{1}$ and $v : X_{2} \to Y_{2}$ are $w^{*}$-continuous,
completely bounded, linear maps, then the map $u \otimes v$ extends
to a well defined $w^{*}$-continuous, linear, completely bounded map
from $X_{1} \otimes^{\sigma h}  X_{2} \to Y_{1} \otimes^{\sigma h}
Y_{2}$, with $\Vert u \otimes v \rVert_{cb}$ $\leq$ $\Vert u
\rVert_{cb}$$\lVert v \rVert_{cb}$.
\end{lemma}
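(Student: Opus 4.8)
The plan is to reduce the statement to the known bifunctoriality of the \emph{extended} Haagerup tensor product, exploiting the duality $X \otimes^{\sigma h} Y = (X_* \otimes_{eh} Y_*)^*$ that is built into the very definition of the normal Haagerup tensor product. The first step is to pass to preadjoints. Since $X_1$ and $Y_1$ are dual operator spaces and $u$ is $w^*$-continuous and completely bounded, $u$ is the adjoint of a unique completely bounded map $u_* : (Y_1)_* \to (X_1)_*$ with $\|u_*\|_{cb} = \|u\|_{cb}$ (a standard duality fact, see e.g.\ \cite{BLM}); likewise $v = (v_*)^*$ for a completely bounded $v_* : (Y_2)_* \to (X_2)_*$ with $\|v_*\|_{cb} = \|v\|_{cb}$.

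Next I would invoke the functoriality of $\otimes_{eh}$ for completely bounded maps (see \cite{ER2}): the elementary tensor map $u_* \otimes v_*$ extends to a well defined completely bounded map
$u_* \otimes v_* : (Y_1)_* \otimes_{eh} (Y_2)_* \to (X_1)_* \otimes_{eh} (X_2)_*$
with $\|u_* \otimes v_*\|_{cb} \leq \|u_*\|_{cb}\,\|v_*\|_{cb}$. Taking its adjoint, and using that the adjoint of a completely bounded map between operator spaces is automatically $w^*$-continuous and completely bounded with the same cb-norm, together with the defining identities $X_i \otimes^{\sigma h} X_j = ((X_i)_* \otimes_{eh} (X_j)_*)^*$ (and similarly for $Y$), yields a $w^*$-continuous completely bounded map
$(u_* \otimes v_*)^* : X_1 \otimes^{\sigma h} X_2 \to Y_1 \otimes^{\sigma h} Y_2$
satisfying $\|(u_* \otimes v_*)^*\|_{cb} \leq \|u\|_{cb}\,\|v\|_{cb}$, which is precisely the required norm bound.

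It then remains to confirm that this adjoint is genuinely an extension of the formal map $u \otimes v$. Evaluating $(u_* \otimes v_*)^*$ on an elementary tensor $x_1 \otimes x_2$ and pairing against $\phi_1 \otimes \phi_2 \in (Y_1)_* \otimes (Y_2)_*$ gives $\langle x_1, u_*(\phi_1)\rangle\,\langle x_2, v_*(\phi_2)\rangle = \langle u(x_1), \phi_1\rangle\,\langle v(x_2), \phi_2\rangle$, so that $(u_* \otimes v_*)^*(x_1 \otimes x_2) = u(x_1) \otimes v(x_2)$. Since such functionals $\phi_1 \otimes \phi_2$ have dense span in the predual, and since the elementary tensors $x_1 \otimes x_2$ span a $w^*$-dense subspace of $X_1 \otimes^{\sigma h} X_2$ (by Lemma~\ref{ballhag}), $w^*$-continuity forces $(u_* \otimes v_*)^*$ to be the unique $w^*$-continuous extension of $u \otimes v$; well-definedness is immediate from this uniqueness.

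The one genuinely nontrivial ingredient is the bifunctoriality of $\otimes_{eh}$ used in the second step, which I would simply cite from \cite{ER2}; all the remaining work is formal duality bookkeeping and the elementary-tensor computation. If one preferred a self-contained argument, this obstacle could instead be addressed by deriving the $\otimes_{eh}$-functoriality from the (elementary) projectivity of the ordinary Haagerup tensor product $\otimes_h$ under completely bounded maps, combined with the description of $\otimes_{eh}$ as the separately $w^*$-continuous completely bounded bilinear functionals.
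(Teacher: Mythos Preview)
Your proof is correct and follows exactly the approach the paper itself takes: pass to preadjoints, apply the functoriality of $\otimes_{eh}$ from \cite{ER2}, and dualize back. The paper's proof is a one-line sketch of precisely this argument; you have simply written out the details, including the verification on elementary tensors that the paper omits.
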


\begin{proof}
This follows by considering the preduals of the maps, and using
the functoriality of the
extended Haagerup tensor product \cite{ER2}.  \end{proof}

\begin{corollary} \label{F}
Let $N$ be a dual algebra, let $X_{1}$ and $Y_{1}$ be dual operator
 spaces which are
  right $N$-modules, and let $X_{2}$, $Y_{2}$ be dual
 operator spaces which are left $N$-modules. If $u : X_{1} \to X_{2}$ and
 $v : Y_{1} \to Y_{2}$ are completely bounded, $w^{*}$-continuous,
 $N$-module maps, then the map $u \otimes v$ extends to a well defined linear,
 $w^{*}$-continuous, completely bounded map from
 $X_{1} \otimes^{\sigma h}_{N} Y_{1}  \to X_{2} \otimes^{\sigma h}_{N} Y_{2}$,
 with $\lVert u \otimes v \rVert_{cb}$ $\leq$ $\lVert u \rVert_{cb}$ $\lVert v \rVert_{cb}$.
\end{corollary}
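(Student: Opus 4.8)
The plan is to reduce everything to Lemma \ref{E} together with the quotient description of the module normal Haagerup tensor product. Since $u$ respects the right $N$-action and $v$ the left $N$-action, both are in particular $w^*$-continuous completely bounded maps between dual operator spaces, so Lemma \ref{E} applies and produces a $w^*$-continuous completely bounded map
$$ w \, = \, u \otimes v \, : \, X_1 \otimes^{\sigma h} Y_1 \longrightarrow X_2 \otimes^{\sigma h} Y_2, \qquad \|w\|_{cb} \le \|u\|_{cb}\,\|v\|_{cb}. $$
Let $q_1$ and $q_2$ denote the canonical ($w^*$-continuous, complete) quotient maps of $X_i \otimes^{\sigma h} Y_i$ onto $X_i \otimes^{\sigma h}_N Y_i$ for $i = 1,2$; recall these are the quotients by the weak*-closures $\mathcal N_i$ of the spans of the elements $xn \otimes y - x \otimes ny$. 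I would then consider the composite $q_2 \circ w$, which is $w^*$-continuous and completely bounded, and show that it factors through $q_1$.

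For the factorization I would check that $q_2 \circ w$ annihilates $\mathcal N_1$. On a generator this is immediate from the module properties of $u$ and $v$:
$$ w\bigl(xn \otimes y - x \otimes ny\bigr) = u(xn)\otimes v(y) - u(x)\otimes v(ny) = u(x)\,n \otimes v(y) - u(x)\otimes n\,v(y), $$
which is exactly a defining generator of $\mathcal N_2$ and hence lies in $\ker q_2$. Thus $q_2 \circ w$ vanishes on the span of the generators, and since $q_2 \circ w$ is $w^*$-continuous it vanishes on the weak*-closure $\mathcal N_1$ as well. Consequently there is a unique linear map $\bar w : X_1 \otimes^{\sigma h}_N Y_1 \to X_2 \otimes^{\sigma h}_N Y_2$ with $\bar w \circ q_1 = q_2 \circ w$, and this is the asserted extension of $u \otimes v$.

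It remains to record the quantitative and topological properties. Because $q_1$ is a complete quotient map, $\|\bar w\|_{cb} = \|\bar w \circ q_1\|_{cb} = \|q_2 \circ w\|_{cb} \le \|w\|_{cb} \le \|u\|_{cb}\|v\|_{cb}$, which gives the stated norm bound. The one point that needs a little care, and which I would flag as the only real obstacle, is the $w^*$-continuity of $\bar w$: a priori one knows only that $\bar w \circ q_1$ is $w^*$-continuous. This follows because $q_1$, being the adjoint of the isometric inclusion of the pre-annihilator $(X_1 \otimes_{hN} Y_1)^{*}_{\sigma}$ into the predual of $X_1 \otimes^{\sigma h} Y_1$, is a weak*-open (topological) quotient map; hence the weak* topology on $X_1 \otimes^{\sigma h}_N Y_1$ is the quotient topology induced by $q_1$, and the universal property of the quotient topology upgrades the $w^*$-continuity of $\bar w \circ q_1$ to that of $\bar w$. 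Alternatively, one can bypass this point by invoking the linearization property recorded just after the definition of $\otimes^{\sigma h}_N$ above: the bilinear map $(x,y) \mapsto u(x)\otimes v(y)$ into $X_2 \otimes^{\sigma h}_N Y_2$ is separately $w^*$-continuous, $N$-balanced, and completely bounded, so it factors through $X_1 \otimes^{\sigma h}_N Y_1$ directly by a $w^*$-continuous completely bounded map, which one then identifies with $\bar w$.
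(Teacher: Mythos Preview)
Your proof is correct and follows essentially the same route as the paper: apply Lemma~\ref{E} to get the map on the unbalanced tensor products, compose with the quotient map $q_2$, check that the result kills the generators of $\mathcal N_1$, and pass to the quotient. The paper's proof is terser and simply asserts that the induced map has ``the required properties''; your treatment of the $w^*$-continuity of $\bar w$ (via the predual description of $q_1$, or alternatively via the universal property of $\otimes^{\sigma h}_N$) fills in the one point the paper leaves implicit.
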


\begin{proof}
Lemma \ref{E} gives a
 $w^{*}$-continuous, completely bounded, linear map
 $X_{1} \otimes^{\sigma h} Y_{1}  \to X_{2} \otimes^{\sigma h} Y_{2}$
 taking $x \otimes y$ to $u(x) \otimes v(y)$.
 Composing this map with the $w^{*}$-continuous,
 quotient map $X_{2} \otimes^{\sigma h} Y_{2}  \to X_{2}
\otimes^{\sigma h}_{N} Y_{2}$, we obtain a $w^{*}$-continuous,
completely bounded map
 $X_{1} \otimes^{\sigma h} Y_{1}  \to X_{2} \otimes^{\sigma h}_{N} Y_{2}$.
 It is easy to see that the kernel of the last map contains all
 terms of form $xn \otimes_{N} y - x \otimes_{N} ny$, with
 $n \in N, x \in X_{1}, y \in Y_{1}$.  This gives a map
 $X_{1} \otimes^{\sigma h}_{N} Y_{1}  \to X_{2} \otimes^{\sigma h}_{N} Y_{2}$
 with the required properties.
\end{proof}

\begin{lemma} \label{D}
If $X$ is a  dual operator $M$-$N$-bimodule and if $Y$ is a
 dual operator $M$-$L$-bimodule,  then $X\otimes^{\sigma h}_{N} Y$ is a
 dual operator $M$-$L$-bimodule.
\end{lemma}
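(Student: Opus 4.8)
The plan is to show that $Z := X \otimes^{\sigma h}_N Y$ is a dual operator space carrying separately $w^*$-continuous, completely contractive left $M$- and right $L$-actions, since this is exactly what it means to be a dual operator $M$-$L$-bimodule. Here $X$ carries commuting left $M$- and right $N$-actions and $Y$ commuting left $N$- and right $L$-actions, the $N$-actions being those over which the tensor product is balanced. That $Z$ is a dual operator space is immediate, as it is by definition an operator space dual. Moreover, since $\otimes^{\sigma h}$ linearizes separately $w^*$-continuous completely contractive bilinear maps, and the canonical bilinear map $M \times Z \to M \otimes^{\sigma h} Z$ is itself separately $w^*$-continuous and completely contractive, it suffices to produce $w^*$-continuous completely contractive linear maps $\mu : M \otimes^{\sigma h} Z \to Z$ and $\rho : Z \otimes^{\sigma h} L \to Z$ implementing the two actions; the module axioms and the mutual commutation of the actions then follow by continuity after checking them on elementary tensors.

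To build $\mu$, I would first use that the left action $M \times X \to X$ is completely contractive and, as $X$ is a dual operator bimodule, separately $w^*$-continuous, so that the universal property of $\otimes^{\sigma h}$ furnishes a $w^*$-continuous completely contractive $\lambda : M \otimes^{\sigma h} X \to X$ with $\lambda(m \otimes x) = mx$. Because the $M$- and $N$-actions on $X$ commute, $\lambda$ is a right $N$-module map, so Corollary \ref{F} applied to $\lambda$ and $\mathrm{id}_Y$ yields a $w^*$-continuous completely contractive map $(M \otimes^{\sigma h} X) \otimes^{\sigma h}_N Y \to X \otimes^{\sigma h}_N Y = Z$. Identifying the domain with $M \otimes^{\sigma h} Z$ via associativity then gives $\mu$, and the symmetric construction starting from the right $L$-action on $Y$ (whose linearization $Y \otimes^{\sigma h} L \to Y$ is a left $N$-module map) gives $\rho$.

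The main obstacle is precisely the identification $(M \otimes^{\sigma h} X) \otimes^{\sigma h}_N Y \cong M \otimes^{\sigma h} (X \otimes^{\sigma h}_N Y)$, a \emph{module} form of associativity going beyond the plain statement of Lemma \ref{asso1}. I would derive it by first applying Lemma \ref{asso1} to obtain $(M \otimes^{\sigma h} X) \otimes^{\sigma h} Y \cong M \otimes^{\sigma h} (X \otimes^{\sigma h} Y)$, and then checking that the $w^*$-closed subspaces by which one quotients to impose $N$-balancing correspond under this isomorphism: using Lemma \ref{E} one sees that $\mathrm{id}_M \otimes q$ is a $w^*$-quotient map, where $q : X \otimes^{\sigma h} Y \to Z$ is the defining quotient, and that $q \circ (\lambda \otimes \mathrm{id}_Y)$ annihilates its kernel by the computation $m(xn) = (mx)n$. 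This exactness of $\otimes^{\sigma h}$ under $w^*$-quotients, the dual analogue of the projectivity of the Haagerup tensor product, is where the real work lies. I would deliberately avoid the tempting shortcut of defining $m \cdot w := (L_m \otimes \mathrm{id})(w)$ pointwise via Corollary \ref{F}, with $L_m : X \to X$ left multiplication by $m$, and then verifying $w^*$-continuity in $m$ by approximating a general $w$ by elementary tensors (which are $w^*$-dense by the module analogue of Lemma \ref{ballhag}): this route fails because $M_*$ sits only $w^*$-densely, not $w^*$-closedly, in $M^*$, so a bounded net in $M_*$ converging in $\sigma(M^*, M)$ need not have its limit in $M_*$.

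Finally, nondegeneracy is automatic: since $M$ and $L$ are unital and $X$, $Y$ are nondegenerate, $1_M$ and $1_L$ act as the identity on every elementary tensor, hence on all of $Z$ by $w^*$-continuity. Thus $Z$ is a nondegenerate operator $M$-$L$-bimodule which is a dual operator space with separately $w^*$-continuous actions, that is, a dual operator $M$-$L$-bimodule.
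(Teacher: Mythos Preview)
Your approach is correct but takes a genuinely different and somewhat longer route than the paper. The paper avoids your ``main obstacle''---the mixed associativity $(M \otimes^{\sigma h} X) \otimes^{\sigma h}_N Y \cong M \otimes^{\sigma h} (X \otimes^{\sigma h}_N Y)$---entirely, by a two-step argument: first it equips the \emph{unbalanced} tensor product $X \otimes^{\sigma h} Y$ with a dual operator $M$-module structure, using only the ordinary associativity of Lemma~\ref{asso1} to produce the chain
\[
M \otimes_h (X \otimes^{\sigma h} Y) \to M \otimes^{\sigma h} (X \otimes^{\sigma h} Y) \cong (M \otimes^{\sigma h} X) \otimes^{\sigma h} Y \to X \otimes^{\sigma h} Y,
\]
citing 3.3.1 in \cite{BLM} for the operator-module axioms and reading off separate $w^*$-continuity from the factorization through $M \otimes^{\sigma h} (X \otimes^{\sigma h} Y)$. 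Then it passes to the quotient $X \otimes^{\sigma h}_N Y$ in one stroke, invoking the general fact (3.8.8 in \cite{BLM}) that the quotient of a dual operator module by a $w^*$-closed submodule is again a dual operator module. Your route instead builds the action directly on the quotient, which forces you to prove the mixed module associativity; as you rightly flag, this rests on exactness of $\otimes^{\sigma h}$ under $w^*$-quotients (equivalently, injectivity of $\otimes_{eh}$), and Lemma~\ref{E} by itself only gives that $\mathrm{id}_M \otimes q$ is $w^*$-continuous and completely contractive, not that it is a complete quotient. One small wrinkle you pass over: to form $(M \otimes^{\sigma h} X) \otimes^{\sigma h}_N Y$ and invoke Corollary~\ref{F} you already need a right $N$-action on $M \otimes^{\sigma h} X$, which is itself a (simpler, unbalanced) instance of the lemma. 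The paper's ``act first on $X \otimes^{\sigma h} Y$, then quotient'' trick is exactly what sidesteps both issues; what it buys is brevity via citation, while what your approach buys is a self-contained construction of the action map $\mu$ that does not lean on the representation theorems behind 3.3.1 and 3.8.8 of \cite{BLM}.
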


\begin{proof}
To show e.g.\ it is a left dual operator $M$-module, use the canonical maps
$$M \otimes_h (X\otimes^{\sigma h} Y) \to M \otimes^{\sigma h}
(X\otimes^{\sigma h} Y) \to (M \otimes^{\sigma h} X) \otimes^{\sigma h} Y
\to X\otimes^{\sigma h} Y .$$
It follows from 3.3.1 in \cite{BLM}, the fact that
 $X \otimes Y$ is a weak* dense $M$-submodule, and the
universal property of $\otimes^{\sigma h}$, that
 $X\otimes^{\sigma h} Y$ is an operator $M$-module.  Composing 
the map $M \otimes^{\sigma h}
(X\otimes^{\sigma h} Y) \to  X\otimes^{\sigma h} Y$ above with
the canonical map $M \times (X\otimes^{\sigma h} Y) \to 
M \otimes^{\sigma h}
(X\otimes^{\sigma h} Y)$, one sees the module action is separately 
weak* continuous (see also
Lemma 2.3 in \cite{EP}).   By
3.8.8 in \cite{BLM},  $X\otimes^{\sigma h}_{N} Y$ is a dual operator 
$M$-module.
  \end{proof}

There is clearly a canonical map $X \otimes_{hM} Y \to X
\otimes^{\sigma h}_{M} Y$, with respect to which:

\begin{corollary} \label{ballmod}
For any dual operator $M$-modules $X$ and $Y$, the image of ${\rm
Ball}(X \otimes_{hM} Y)$ is $w^{*}$-dense in
 ${\rm Ball}(X \otimes^{\sigma h}_{M} Y)$.
\end{corollary}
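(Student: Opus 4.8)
The plan is to deduce the corollary directly from Lemma \ref{ballhag} by means of a commuting square relating the four tensor products, rather than rerunning the geometric Hahn--Banach argument used there. I would introduce the norm quotient map $p : X \otimes_{h} Y \to X \otimes_{hM} Y$, the quotient map $q : X \otimes^{\sigma h} Y \to X \otimes^{\sigma h}_{M} Y$, the canonical complete isometry $i : X \otimes_{h} Y \to X \otimes^{\sigma h} Y$, and the canonical map $j : X \otimes_{hM} Y \to X \otimes^{\sigma h}_{M} Y$ from the statement. Since on an elementary tensor $x \otimes y$ all four maps produce the corresponding class, one has $q \circ i = j \circ p$ on the algebraic tensor product, hence everywhere by continuity.

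Next I would record three facts. First, because $i$ is a complete isometry, Lemma \ref{ballhag} says that $A := i({\rm Ball}(X \otimes_{h} Y))$ is $w^*$-dense in $B := {\rm Ball}(X \otimes^{\sigma h} Y)$. Second, $X \otimes^{\sigma h}_{M} Y$ is by definition the quotient of the \emph{dual} space $X \otimes^{\sigma h} Y$ by a \emph{weak*-closed} subspace, so $q$ is $w^*$-continuous and is a metric surjection; consequently $q(B)$ contains the open unit ball of $X \otimes^{\sigma h}_{M} Y$ and lies inside its closed unit ball, giving $\overline{q(B)}^{\,w^*} = {\rm Ball}(X \otimes^{\sigma h}_{M} Y)$. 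Third, $p$ and $j$ are contractive.

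The chase then runs as follows. Weak*-continuity of $q$ yields $q(B) = q(\overline{A}^{\,w^*}) \subseteq \overline{q(A)}^{\,w^*}$, so
\[
{\rm Ball}(X \otimes^{\sigma h}_{M} Y) = \overline{q(B)}^{\,w^*} \subseteq \overline{q(A)}^{\,w^*} \subseteq {\rm Ball}(X \otimes^{\sigma h}_{M} Y),
\]
the final inclusion holding since $q$ is contractive and the closed ball is $w^*$-closed. Hence $q(A)$ is $w^*$-dense in ${\rm Ball}(X \otimes^{\sigma h}_{M} Y)$. Finally, $q \circ i = j \circ p$ together with contractivity of $p$ gives $q(A) = j\big(p({\rm Ball}(X \otimes_{h} Y))\big) \subseteq j({\rm Ball}(X \otimes_{hM} Y))$, so the image of ${\rm Ball}(X \otimes_{hM} Y)$ under $j$ is $w^*$-dense, as required.

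The step I expect to require the most care is the second fact above: verifying that $q$ really is $w^*$-continuous (which rests on the module tensor product being formed by quotienting the dual space by the weak*-closure of the balancing relations, so that one is dividing by a weak*-closed subspace) and that such a quotient map sends the unit ball onto a $w^*$-dense subset of the target ball. Once this is in hand the remainder is a formal diagram chase, and no separate separation argument is needed.
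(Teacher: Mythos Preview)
Your proof is correct and follows essentially the same route as the paper. The paper simply presents the pointwise version of your argument: given $z$ with $\lVert z\rVert<1$, it lifts $z$ through the quotient map $q:X\otimes^{\sigma h}Y\to X\otimes^{\sigma h}_{M}Y$ (citing \cite[Proposition~2.1]{EP} for its $w^*$-continuity), applies Lemma~\ref{ballhag} to approximate the lift by a net in ${\rm Ball}(X\otimes_h Y)$, and pushes back down via $q$---implicitly using the same factorization $q\circ i=j\circ p$ you spell out. Your version is just the set-theoretic repackaging; note in particular that the paper does \emph{not} rerun the Hahn--Banach separation argument, so your opening remark is a slight mischaracterization of what you are avoiding.
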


\begin{proof}
Consider the canonical $w^{*}$-continuous quotient map $q : X
 \otimes^{\sigma h} Y \to X \otimes^{\sigma h}_{M} Y$ as in
  \cite[Proposition 2.1]{EP}.
 If $z$ $\in$ $X \otimes^{\sigma h}_{M} Y$ with $\lVert z \rVert$ $<$ $1$,
 then there exists  $z'$ $\in$ $X \otimes^{\sigma h} Y$ with
 $\lVert z' \rVert$ $<$ 1 such that $q(z')$ = $z$. By the above Lemma,
  there
 exists a net $(z_{t})$ in ${\rm Ball}(X \otimes_{h} Y)$ such that
 $z_{t}$ $\buildrel w^* \over \to$ $z'$.  Then
 $q(z_{t})$ $\buildrel w^* \over \to$ $q(z')$ = $z$.
\end{proof}

\begin{lemma} \label{A}  For any dual operator $M$-modules $X$ and $Y$,
and $m,n \in \Ndb$, we have  $M_{mn}(X \otimes^{\sigma h}_{M} Y)$
$\cong$ $C_{m}(X) \otimes^{\sigma h}_{M} R_{n}(Y)$ completely
isometrically and weak* homeomorphically. This is also true with $m,
n$ replaced by arbitrary cardinals: $M_{IJ} (X \otimes^{\sigma
h}_{M} Y)$ $\cong$ $C_{I}(X) \otimes^{\sigma h}_{M} R_{J}(Y)$.
\end{lemma}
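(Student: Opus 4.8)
The plan is to establish first the finite non-module identity $M_{m,n}(X \otimes^{\sigma h} Y) \cong C_m(X) \otimes^{\sigma h} R_n(Y)$ by comparing preduals, using $X \otimes^{\sigma h} Y = (X_* \otimes_{eh} Y_*)^*$. The ingredients, all standard from \cite{BLM,ER2} and available because $C_m$ and $R_n$ are finite dimensional, are the identification of the predual of $C_m(X)$ with $R_m(X_*)$, of the predual of $R_n(Y)$ with $C_n(Y_*)$, and of the predual of $M_{m,n}(Z)$ with $R_m \otimes_{eh} Z_* \otimes_{eh} C_n$ for any dual operator space $Z$. Taking $Z = X \otimes^{\sigma h} Y$, so that $Z_* = X_* \otimes_{eh} Y_*$, and invoking the associativity of $\otimes_{eh}$ used in the proof of Lemma \ref{asso1}, one identifies both $(M_{m,n}(X \otimes^{\sigma h} Y))_*$ and $(C_m(X) \otimes^{\sigma h} R_n(Y))_*$ with $R_m \otimes_{eh} X_* \otimes_{eh} Y_* \otimes_{eh} C_n$. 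Dualizing then yields a complete isometry, which is automatically a weak$^*$-homeomorphism.

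To incorporate the module balancing I would use that $X \otimes^{\sigma h}_M Y$ is the quotient of $X \otimes^{\sigma h} Y$ by the weak$^*$-closure of the span of the terms $xm \otimes y - x \otimes my$. Because matrix amplification commutes with quotients by weak$^*$-closed subspaces, that is $M_{m,n}(Z/W) \cong M_{m,n}(Z)/M_{m,n}(W)$ completely isometrically and weak$^*$-homeomorphically for $W$ weak$^*$-closed (\cite{BLM}), it is enough to check that the complete isometry just produced carries the balancing subspace of $M_{m,n}(X \otimes^{\sigma h} Y)$ onto that of $C_m(X) \otimes^{\sigma h} R_n(Y)$. This is a direct check on generators: the matrix unit $E_{ij}$ tensored with $xm \otimes y - x \otimes my$ is sent to $\xi m \otimes \eta - \xi \otimes m\eta$, where $\xi \in C_m(X)$ and $\eta \in R_n(Y)$ place $x$ and $y$ in slots $i$ and $j$ respectively, and conversely every generator of the balancing subspace for the right-hand side arises this way. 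Hence the two weak$^*$-closed balancing subspaces correspond, and the quotients are isomorphic as required.

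The step I expect to be the main obstacle is the extension to arbitrary cardinals, where the small spaces $C_I(X) = \Kdb_{I,1}(X)$ and $R_J(Y) = \Kdb_{1,J}(Y)$ appear on the right while the full matrix space $M_{I,J}$ appears on the left; the mechanism reconciling these is that $\otimes^{\sigma h}_M$ performs the weak$^*$-completion bridging the $\Kdb$ and $M$ conventions. I would realize $M_{I,J}(X \otimes^{\sigma h}_M Y)$ as the weak$^*$-closure of the union of its finite corners $M_{F,G}(X \otimes^{\sigma h}_M Y)$ over finite $F \subseteq I$ and $G \subseteq J$, identify each corner with $C_F(X) \otimes^{\sigma h}_M R_G(Y)$ by the finite case, and note that these identifications are compatible with the corner inclusions. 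It then remains to see that the finitely supported elements are weak$^*$-dense in $C_I(X) \otimes^{\sigma h}_M R_J(Y)$, which is exactly the type of conclusion furnished by the ball-density results of Lemma \ref{ballhag} and Corollary \ref{ballmod}, since a finite column or row approximates in norm and the normal module tensor product is the weak$^*$-closure of the ordinary module Haagerup tensor product. The delicate point is to keep the identification isometric in the limit, but as the finite-corner maps are complete isometries compatible with the inclusions, their common weak$^*$-continuous extension is again a complete isometry and a weak$^*$-homeomorphism.
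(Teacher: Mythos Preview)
Your argument for finite $m,n$ is essentially the paper's own proof: compute the predual of $C_m(X)\otimes^{\sigma h}R_n(Y)$ as $R_m\otimes_{eh}X_*\otimes_{eh}Y_*\otimes_{eh}C_n$ using associativity of $\otimes_{eh}$ and the finite-dimensionality of $R_m,C_n$, recognize this as the predual of $M_{mn}(X\otimes^{\sigma h}Y)$, and then pass to the module quotient by checking that the balancing subspaces correspond under the explicit isomorphism $[x_i]\otimes[y_j]\mapsto[x_i\otimes y_j]$ and that $M_{mn}$ commutes with quotients by weak$^*$-closed subspaces.

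For arbitrary cardinals your route diverges from the paper's. The paper simply remarks that the infinite case is ``similar (or can be deduced easily from Proposition \ref{asso2})'', i.e.\ from associativity of $\otimes^{\sigma h}_M$, presumably via identifications such as $C_I^w(X)\cong C_I^w\otimes^{\sigma h}X$ and $M_{IJ}(Z)\cong C_I^w\otimes^{\sigma h}Z\otimes^{\sigma h}R_J^w$ together with the finite case or a direct predual computation. Your approach instead builds the isomorphism as a weak$^*$-limit of the finite-corner isomorphisms, invoking the ball-density results. Both are reasonable sketches; yours is more hands-on, while the associativity route is cleaner once the relevant tensor identities for $C_I^w$ and $R_J^w$ are in hand. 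One point worth flagging in your version: you correctly sense the tension between $C_I(X)=\Kdb_{I,1}(X)$ and the dual-space hypotheses needed for $\otimes^{\sigma h}$, but you should make precise in what sense $C_I(X)\otimes^{\sigma h}_M R_J(Y)$ is defined when $C_I(X)$ is not itself a dual operator space (the answer being that the tensor product effects the weak$^*$-completion, as you say, but this deserves a line of justification rather than an assertion).
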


\begin{proof}
We just prove the case that $m,n \in \Ndb$, the other being similar
(or can be deduced easily from Proposition \ref{asso2}).
 First we claim that $M_{mn} (X \otimes^{\sigma h } Y)$ $\cong $
$C_{m}(X) \otimes^{\sigma h} R_{n}(Y)$. Using facts from \cite{ER2}
and basic operator space duality, the predual of the latter space is
\begin{eqnarray*}
C_{m}(X)_{*} \otimes_{eh} R_{n}(Y)_{*} & \cong &   (R_{m}
\otimes_{h} X_{*} ) \otimes_{eh} (Y_{*} \otimes_{h}
C_{n}) \\
&\cong&   (R_{m} \otimes_{eh} X_{*}) \otimes_{eh} (Y_{*}
\otimes_{eh}
C_{n}) \\
 & \cong&   R_{m} \otimes_{eh} (X_{*}  \otimes_{eh}  Y_{*} )\otimes_{eh}
  C_{n} \\
  & \cong&   R_{m} \otimes_{h} (X_{*}  \otimes_{eh}  Y_{*} )\otimes_{h}
  C_{n} \\
& \cong& (X_{*} \otimes_{eh} Y_{*}) \overset{\frown}{\otimes}
(M_{mn})_* .
\end{eqnarray*}
We have used for example 1.5.14  in \cite{BLM}, 5.15 in \cite{ER2},
and associativity of the extended Haagerup
 tensor product \cite{ER2}.
The latter space is the predual of $M_{mn} (X \otimes^{\sigma h }
 Y)$, by e.g.\ 1.6.2 in \cite{BLM}.  This gives the claim.
 If $\theta$ is  the ensuing completely isometric isomorphism
 $C_{m}(X)   \otimes^{\sigma h} R_{n}(Y) \to M_{mn}(X \otimes^{\sigma h}
 Y )$, it is easy to check that $\theta$
 takes $[x_{1}\  x_{2}\       \hdots x_{m}]^{T}$  $\otimes$  $[y_{1} \ y_{2}  \hdots  y_{n}]$
 to the matrix $[x_{i} \otimes y_{j}]$.
Now  $C_{m}(X) \otimes^{\sigma  h}_{M} R_{n}(Y)$  = $C_{m}(X)
\otimes^{\sigma h} R_{n}(Y)/N$ where $N $ = $[xt \otimes y - x
\otimes ty]^{-w^{*}} $ with
 $x\in C_{m}(X), y \in R_{n}(Y), t \in M $.
 Let  $N'$ = $[x t \otimes y - x \otimes ty]^{-w^{*}}$
 where $x \in X, y \in Y, t \in M $, then clearly $\theta(N) = M_{mn}(N')$.
 Hence $$C_{m}(X) \otimes^{\sigma h} R_{n}(Y)/N \cong
 M_{mn}(X \otimes^{\sigma h} Y) /\theta(N) =
 M_{mn}(X \otimes^{\sigma h} Y)  /M_{nm}(N'),$$
which in turn equals $M_{mn}(X \otimes^{\sigma h} Y/N') = M_{mn}(X
\otimes^{\sigma h}_{M} Y)$.
\end{proof}

\begin{corollary} \label{ballmatrix} For any dual operator $M$-modules
$X$ and $Y$, and $m,n \in \Ndb$, we have that ${\rm Ball}(M_{mn}(X
\otimes_{h M}  Y)) $ is $w^{*}$-dense in {\rm Ball} $(M_{mn}(X
\otimes^{\sigma h}_{M} Y))$.
\end{corollary}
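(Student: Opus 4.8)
The plan is to reduce the matricial statement to the $m=n=1$ case already furnished by Corollary \ref{ballmod}, transporting everything through the identifications of Lemma \ref{A}. First I would record the non-normal analogue of Lemma \ref{A}, namely the completely isometric identification $M_{mn}(X \otimes_{hM} Y) \cong C_m(X) \otimes_{hM} R_n(Y)$. This is the module version of the standard matricial formula $M_{mn}(X \otimes_h Y) \cong C_m(X) \otimes_h R_n(Y)$ for the Haagerup tensor product (see e.g.\ 1.5.14 in \cite{BLM}); one obtains it by quotienting that isometry on both sides by the appropriate balancing subspaces, in exactly the manner of the computation $\theta(N) = M_{mn}(N')$ carried out in the proof of Lemma \ref{A}. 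Both this identification and the weak* homeomorphic identification $\theta$ of Lemma \ref{A} send a simple tensor $[x_i]^T \otimes [y_j]$ to the matrix $[x_i \otimes y_j]$.

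Second, I would observe that these two identifications intertwine the canonical maps into the normal tensor products. Precisely, the entrywise application of the canonical map $X \otimes_{hM} Y \to X \otimes^{\sigma h}_M Y$ gives a map $M_{mn}(X \otimes_{hM} Y) \to M_{mn}(X \otimes^{\sigma h}_M Y)$, and under the identifications above this corresponds to the canonical map $C_m(X) \otimes_{hM} R_n(Y) \to C_m(X) \otimes^{\sigma h}_M R_n(Y)$ followed by $\theta$. Since every map here agrees on simple tensors, sending $[x_i]^T \otimes [y_j]$ to $[x_i \otimes y_j]$, and since the simple tensors span a dense subspace, the resulting square commutes.

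Finally, I would apply Corollary \ref{ballmod} with $X, Y$ replaced by the dual operator $M$-modules $C_m(X)$ and $R_n(Y)$: the image of ${\rm Ball}(C_m(X) \otimes_{hM} R_n(Y))$ is $w^*$-dense in ${\rm Ball}(C_m(X) \otimes^{\sigma h}_M R_n(Y))$. Transporting this across the commuting square, and using that $\theta$ is a complete isometry and a weak* homeomorphism (so it carries balls onto balls and preserves $w^*$-density), yields that the image of ${\rm Ball}(M_{mn}(X \otimes_{hM} Y))$ is $w^*$-dense in ${\rm Ball}(M_{mn}(X \otimes^{\sigma h}_M Y))$, as required.

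The main obstacle is the first step: verifying that the module Haagerup matricial identity holds completely isometrically, and that its balancing subspace matches $M_{mn}(N')$ under the matricial formula for $\otimes_h$. This is routine but is where the real content lies; once the two identifications are in hand and shown to intertwine the canonical maps, the density assertion is a formal consequence of the $m=n=1$ case. (Alternatively, one could argue directly in the spirit of the proof of Corollary \ref{ballmod}, lifting a norm-$<1$ element of $M_{mn}(X \otimes^{\sigma h}_M Y)$ through the complete quotient map $M_{mn}(q)$ and invoking Lemma \ref{ballhag} at the matrix level via the identification $M_{mn}(X \otimes^{\sigma h} Y) \cong C_m(X) \otimes^{\sigma h} R_n(Y)$ established inside the proof of Lemma \ref{A}.)
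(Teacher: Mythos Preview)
Your proposal is correct and is essentially the paper's own argument: the paper too applies Lemma~\ref{A} and Corollary~\ref{ballmod} to $C_m(X)$ and $R_n(Y)$, and then transports back through the module Haagerup matricial identity. The only difference is that for your ``main obstacle'' the paper simply cites 3.4.11 in \cite{BLM} for the completely isometric identification $M_{mn}(X \otimes_{hM} Y) \cong C_m(X) \otimes_{hM} R_n(Y)$ rather than re-deriving it, so there is no obstacle at all.
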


\begin{proof}
If $\eta$ $\in$ {\rm Ball} $(M_{mn}(X \otimes^{\sigma h}_{M}  Y))$,
then by Lemma \ref{A}, $\eta$ corresponds to  an element $\eta'$
$\in$ $C_{m}(X) \otimes ^{\sigma h}_{M} R_{n}(Y)$. By Lemma
\ref{ballmod}, there exists a net $(u_{t})$ in
 $C_{m}(X) \otimes_{h M} R_{n}(Y)$ such that
 $u_{t}   \buildrel w^* \over  \to \eta'$.
 By 3.4.11 in \cite{BLM}, $u_{t}$ corresponds to
 $u_{t}'$ $\in$ {\rm Ball}$(M_{mn}(X \otimes_{hM} Y))$
 such that $u_{t}' \buildrel w^{*} \over  \to \eta$.
\end{proof}

\begin{proposition} \label{asso2}
The normal module Haagerup tensor product is associative. That is,
if
 $M$ and $N$ are dual operator algebras, if $X$ is a right dual operator
 $M$-module, if $Y$ is a $M$-$N$-dual operator bimodule, and $Z$ is a
 left dual operator $N$-module, then
 $(X \otimes^{\sigma h}_{M} Y) \otimes^{\sigma h}_{N}  Z$
 is completely isometrically isomorphic to
 $X \otimes^{\sigma h}_{M} (Y \otimes^{\sigma h}_{N}  Z)$.
\end{proposition}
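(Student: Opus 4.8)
The plan is to reduce everything to the already-established associativity of the non-module normal Haagerup tensor product (Lemma~\ref{asso1}), by exhibiting both iterated module tensor products as one and the same quotient of the triple product. Write $W := X \otimes^{\sigma h} Y \otimes^{\sigma h} Z$, which is unambiguous by Lemma~\ref{asso1} and which is a dual operator $M$-$N$-bimodule by two applications of Lemma~\ref{D}. Inside $W$ let $K$ be the weak*-closure of the span of all the $M$-balancing terms $xm \otimes y \otimes z - x \otimes my \otimes z$ together with all the $N$-balancing terms $x \otimes yn \otimes z - x \otimes y \otimes nz$, where $x \in X$, $y \in Y$, $z \in Z$, $m \in M$, $n \in N$. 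I claim that
\[
(X \otimes^{\sigma h}_{M} Y) \otimes^{\sigma h}_{N} Z \;\cong\; W/K \;\cong\; X \otimes^{\sigma h}_{M} (Y \otimes^{\sigma h}_{N} Z),
\]
completely isometrically and weak* homeomorphically, and that the resulting identification carries $(x \otimes y) \otimes z$ to $x \otimes (y \otimes z)$; composing the two isomorphisms then gives the proposition.

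The crux is the observation that $\otimes^{\sigma h}$ respects weak*-continuous complete quotient maps. Granting this, tensoring the quotient map $X \otimes^{\sigma h} Y \to X \otimes^{\sigma h}_{M} Y$ with the identity on $Z$ (legitimate by Lemma~\ref{E}) produces a weak*-continuous complete quotient map $W \to (X \otimes^{\sigma h}_{M} Y) \otimes^{\sigma h} Z$ whose kernel is exactly the weak*-closure of the span of the $M$-balancing terms; a further quotient by the image of the $N$-balancing terms realizes $(X \otimes^{\sigma h}_{M} Y) \otimes^{\sigma h}_{N} Z$ as $W/K$. The symmetric argument, peeling off the $N$-variable first, realizes $X \otimes^{\sigma h}_{M} (Y \otimes^{\sigma h}_{N} Z)$ as the \emph{same} $W/K$. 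The quotient-respecting property of $\otimes^{\sigma h}$ itself follows by duality from the injectivity (subspace property) of the extended Haagerup tensor product on the preduals, as developed in \cite{ER2}: a weak*-continuous complete quotient $X \to X/X_1$ has as predual a complete isometry $X_1^{\perp} \to X_*$, and injectivity of $\otimes_{eh}$ then makes $X_1^{\perp} \otimes_{eh} Z_* \to X_* \otimes_{eh} Z_*$ a complete isometry, which dualizes to the desired complete quotient map.

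The step I expect to be the main obstacle is verifying that the kernel so produced is precisely the stated weak*-closure $K$ rather than merely containing it, i.e.\ pinning down the kernel on the nose. Here the density results Corollary~\ref{ballmod} and Corollary~\ref{ballmatrix} are the essential tools: they let one pass between the module Haagerup tensor product $\otimes_{hM}$ and its normal version $\otimes^{\sigma h}_{M}$ by weak*-approximation at every matrix level, thereby reducing the kernel computation to the classical norm-closed module Haagerup tensor product, where the analogous associativity and the identification of the balancing kernels are standard (see \cite{BLM,BMP}; compare also Lemma~\ref{A} and \cite{EP}). Once the two kernels are matched with $K$, the complete isometry of Lemma~\ref{asso1} descends to the quotients and yields the asserted bimodule isomorphism.
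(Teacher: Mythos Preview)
Your approach is valid but takes a genuinely different route from the paper's. Both arguments ultimately identify each iterated product with the same quotient $W/K$, but the paper does this via a \emph{universal property}: it first checks (by extending \cite[Proposition~2.2]{EP} to the threefold setting) that $W/K$ linearizes separately weak*-continuous, completely contractive, balanced trilinear maps, and then shows directly that $(X \otimes^{\sigma h}_{M} Y) \otimes^{\sigma h}_{N} Z$ satisfies the same universal property. The only delicate step in the paper's argument is showing that the induced bilinear map $u' : (X \otimes^{\sigma h}_{M} Y) \times Z \to W$ is completely contractive, and that is precisely where Corollary~\ref{ballmatrix} enters. Your route instead uses the projectivity of $\otimes^{\sigma h}$ (dual to injectivity of $\otimes_{eh}$ from \cite{ER2}) to exhibit the iterated product as a two-stage complete quotient of $W$, and then matches the composite kernel with $K$. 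Each approach has its merits: the paper's is more categorical and transparently symmetric; yours avoids constructing the trilinear universal property from scratch and leans instead on a single clean duality fact.

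One remark on what you flag as the main obstacle. The reduction to the norm-closed module Haagerup tensor product via Corollaries~\ref{ballmod} and~\ref{ballmatrix} is not really the right tool for the kernel identification; a direct preannihilator argument does the job more cleanly. Your injectivity-of-$\otimes_{eh}$ reasoning already shows that the first-stage kernel is the weak*-closed span of $J \otimes Z$ (where $J$ is the $M$-balancing kernel in $X \otimes^{\sigma h} Y$), since a separately weak*-continuous bilinear form on $(X \otimes^{\sigma h} Y) \times Z$ vanishing on $J \times Z$ descends, with the same completely bounded norm, to a form on $(X \otimes^{\sigma h}_M Y) \times Z$. For the second stage, the point is simply that the $N$-balancing condition $un \otimes z = u \otimes nz$ for arbitrary $u \in X \otimes^{\sigma h}_M Y$ reduces, by weak*-density of elementary tensors and the separate weak*-continuity of the canonical map $(X \otimes^{\sigma h}_M Y) \times Z \to (X \otimes^{\sigma h}_M Y) \otimes^{\sigma h} Z$ (cf.\ \cite[Lemma~2.3]{EP}), to the case $u = x \otimes_M y$, which is exactly the $N$-balancing relation in $W$. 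No passage through the norm-closed theory is needed.
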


\begin{proof}
We define $X \otimes^{\sigma h}_{M} Y  \otimes^{\sigma h}_{N}  Z$ to
be
 the quotient of $X \otimes^{\sigma h} Y \otimes^{\sigma h} Z$ by the
 $w^{*}$-closure of the linear span of terms of the form
 $xm \otimes y \otimes z- x \otimes my \otimes z$ and
 $x \otimes yn \otimes z- x \otimes y \otimes nz$ with
 $x\in X, y \in Y, z \in Z, m \in M, n \in N$. By
 extending the arguments of Proposition 2.2 in \cite{EP} to the
 threefold normal module
 Haagerup tensor product, one sees that
 $X \otimes^{\sigma h}_{M} Y \otimes^{\sigma h}_{N}  Z$
 has the following universal property: If $W$
 is a dual operator space and $u : X \times Y \times Z \to W $ is a
 separately $w^{*}$-continuous, completely contractive, balanced, trilinear map,
 then there exists a $w^{*}$-continuous and completely contractive, linear map
 $\tilde{u} :X \otimes^{\sigma h}_{M} Y  \otimes^{\sigma h}_{N} Z \to W  $
 such that $\tilde{u}(x\otimes_{M} y \otimes_{N} z) = u(x,y,z)$.
     We will prove that  $(X \otimes^{\sigma h}_{M} Y ) \otimes^{\sigma h}_{N}  Z$
 has the above universal property defining
 $X \otimes^{\sigma h}_{M} Y  \otimes^{\sigma h}_{N}  Z$.
 Let $u : X \times Y \times Z \to W $ be a
 separately $w^{*}$-continuous, completely contractive,
 balanced, trilinear map.
 For each fixed $z \in Z$,  define $u_{z} : X \times Y \to W $
 by $u_{z}(x, y) = u(x,y,z)$.  This is  a separately
 $w^{*}$-continuous, balanced, bilinear map, which is completely
 bounded.
 Hence we obtain a $w^{*}$-continuous completely bounded linear map
 $u_{z}' : X \otimes^{\sigma h}_{M} Y \to W$ such that
 $u_{z}'(x \otimes_{M} y) = u_{z}(x,y)$.
 Define $u' :  (X \otimes^{\sigma h}_{M} Y) \times Z \to  W$ by
 $u'(a,z)$ =  $u_{z}'(a)$, for $a \in X \otimes^{\sigma h}_{M} Y$.
 Then $u'(x \otimes_{M} y,z) = u(x,y,z)$, and it
 is routine to check that $u'$ is bilinear and balanced over $N$.
 We will show that $u'$ is
completely contractive on  $(X \otimes_{ h M} Y) \times Z$, and then
the complete
 contractivity of $u'$ follows from Corollary \ref{ballmatrix}.
 Let $a$ $\in$ $M_{nm}(X \otimes_{hM} Y)$ with
 $\lVert a \lVert$ $<$ 1 and $z$ $\in$ $M_{mn}(Z)$ with
 $\lVert z \rVert$ $<$ 1. We want to show $\lVert u_{n}' (a,z) \rVert$ $<$ 1.
 It is well known that we can write $a = x \odot_{M} y$ where
 $x$ $\in$ $M_{nk}(X)$ and $y \in M_{km} (Y)$ for some
 $k \in \mathbb{N}$, with
 $\lVert x \rVert$ $<$ 1
 and $\Vert y \rVert$ $<$ 1. Hence $\lVert u_{n}' (a,z) \rVert$ =
 $\lVert u_{n}(x,y,z)\rVert$ $\leq$ $\lVert x \rVert  \lVert y \rVert \lVert z \rVert$ $<$ 1,
 proving $u'$ is completely contractive. By Proposition
 2.2 in \cite{EP}, we obtain a $w^{*}$-continuous, completely contractive,
 linear map $\tilde{u} :(X \otimes^{\sigma h}_{M} Y) \otimes^{\sigma h}_{N} Z  \to W  $
 such that $\tilde{u}((x \otimes_{M} y) \otimes_{N} z )$ =
 $u'(x \otimes_{M} y, z) $ = $u(x,y,z)$. This shows that
 $(X \otimes^{\sigma h}_{M} Y) \otimes^{\sigma h}_{N}  Z$ has the defining universal
 property of $X \otimes^{\sigma h}_{M} Y \otimes^{\sigma h}_{N}  Z$.
  Therefore $(X \otimes^{\sigma h}_{M} Y) \otimes^{\sigma h}_{N}  Z$ is
 completely isometrically isomorphic and $w^*$-homeomorphic
 to $X \otimes^{\sigma h}_{M} Y \otimes^{\sigma h}_{N}  Z$. Similarly
 $X \otimes^{\sigma h}_{M} (Y \otimes^{\sigma h}_{N}  Z) =
 X \otimes^{\sigma h}_{M} Y \otimes^{\sigma h}_{N} Z$.
\end{proof}

\begin{lemma} \label{C}
If $X$ is a left dual operator $M$-module then $M \otimes^{\sigma
h}_{ M} X$ is completely isometrically isomorphic to $X$.
\end{lemma}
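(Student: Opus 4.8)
The plan is to exhibit explicit mutually inverse maps between $M \otimes^{\sigma h}_{M} X$ and $X$, following the classical ``unit object'' argument but checking the operator-space and weak* refinements at each stage. In one direction I would use the module action itself. The map $(m,x) \mapsto m \cdot x$ from $M \times X$ to $X$ is separately $w^{*}$-continuous (since $X$ is a dual operator module), completely contractive (since $X$ is an operator module), and $M$-balanced, by associativity of the action: $(ma)\cdot x = m \cdot (ax)$. Hence by the universal property of the module normal Haagerup tensor product (Proposition 2.2 of \cite{EP}, recalled before Lemma \ref{E}), it induces a $w^{*}$-continuous complete contraction $\Phi : M \otimes^{\sigma h}_{M} X \to X$ with $\Phi(m \otimes_{M} x) = m \cdot x$.

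For the reverse direction I would use that $M$ is unital with identity $1$ of norm one, and define $\Psi : X \to M \otimes^{\sigma h}_{M} X$ by $\Psi(x) = 1 \otimes_{M} x$. To see that $\Psi$ is a $w^{*}$-continuous complete contraction, I would factor it as the complete isometry $X \cong \mathbb{C} \otimes^{\sigma h} X$ (which follows from $\mathbb{C} \otimes_{eh} X_{*} \cong X_{*}$ and the definition of $\otimes^{\sigma h}$), followed by the map $\iota \otimes \mathrm{id}_{X}$ induced by the unital embedding $\iota : \mathbb{C} \to M$, $\lambda \mapsto \lambda 1$ (a $w^{*}$-continuous complete isometry since $\lVert 1 \rVert = 1$), and then the $w^{*}$-continuous quotient map $M \otimes^{\sigma h} X \to M \otimes^{\sigma h}_{M} X$. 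That $\iota \otimes \mathrm{id}_{X}$ is a $w^{*}$-continuous complete contraction is exactly Lemma \ref{E}.

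Finally I would verify the two composites are the identity. The composite $\Phi \circ \Psi$ sends $x$ to $1 \cdot x = x$, so it equals $\mathrm{id}_{X}$ outright, with no density argument needed. For $\Psi \circ \Phi$, on an elementary tensor the balancing relation gives $1 \otimes_{M} (m \cdot x) = (1 \cdot m) \otimes_{M} x = m \otimes_{M} x$, whence $\Psi(\Phi(m \otimes_{M} x)) = m \otimes_{M} x$; thus $\Psi \circ \Phi$ agrees with the identity on the span of elementary tensors, which is $w^{*}$-dense by Corollary \ref{ballmod}. Since $\Psi \circ \Phi - \mathrm{id}$ is $w^{*}$-continuous, its kernel is $w^{*}$-closed and therefore equals all of $M \otimes^{\sigma h}_{M} X$. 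Consequently $\Phi$ and $\Psi$ are mutually inverse $w^{*}$-continuous complete contractions, so $\Phi$ is a completely isometric $w^{*}$-homeomorphism. The step demanding the most care is the complete contractivity and weak* continuity of $\Psi$: once the identification $\mathbb{C} \otimes^{\sigma h} X \cong X$ is in hand, Lemma \ref{E} and the balancing relation reduce the remainder to routine verification.
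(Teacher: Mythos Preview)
Your argument is correct and is precisely a fleshed-out version of what the paper itself offers: the paper's proof is a one-line pointer (``As in Lemma 3.4.6 in \cite{BLM}, or follows from the universal property''), and you have carried out the universal-property route in detail. The only remark is that your density appeal at the end could cite Corollary \ref{ballmod} (or even just the $w^{*}$-density of $X \otimes_{hM} Y$ in $X \otimes^{\sigma h}_{M} Y$) directly, which you do.
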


\begin{proof}
As in Lemma 3.4.6 in \cite{BLM}, or follows from the
universal property.
\end{proof}

 \section{Morita contexts}

We now define two
 variants of Morita equivalence for unital dual operator
algebras, the first being more general than the second.
There are many equivalent variants of these definitions,
some of which we shall see later.

Throughout this section, we fix a pair of unital dual operator
algebras, $M$ and $N$, and a pair of dual operator
 bimodules $X$ and $Y$; $X$ will always be a $M$-$N$-bimodule and
 $Y$ will always be an $N$-$M$-bimodule.

\begin{definition} \label{w*Mor}  We say that $M$ is {\em weak*
Morita equivalent} to $N$, if  $M \cong X \otimes^{\sigma h}_{N} Y$
as dual operator $M$-bimodules (that is, completely
isometrically, $w^{*}$-homeomorphically, and also as
$M$-bimodules), and similarly if
 $N \cong Y \otimes^{\sigma h}_{M} X$
as dual operator $N$-bimodules.
We call $(M, N , X, Y)$ a {\em weak* Morita context} in this case.
\end{definition}

In this section, we will also fix  separately
 weak$^{*}$-continuous completely contractive bilinear maps
 $(\cdot,\cdot ) : X \times Y
 \to M$, and
 $\lbrack \cdot, \cdot \rbrack : Y \times X \to N$, and we will work 
with the  $6$-tuple, or {\em context}, $(M, N , X, Y, ( \cdot, \cdot ), \lbrack \cdot,
 \cdot \rbrack)$.

\begin{definition} \label{basicdef}  We say that $M$ is
{\em weakly Morita equivalent} to $N$, if there exist $w^*$-dense
operator algebras $A$ and $B$ in $M$ and $N$
respectively, and there exists a $w^*$-dense
 operator $A$-$B$-submodule $X^{'}$ in $X$,  and a $w^*$-dense
 $B$-$A$-submodule $Y^{'}$ in $Y$, such that the `subcontext'
 $(A,B,X^{'},Y^{'},(\cdot, \cdot ), \lbrack \cdot, \cdot \rbrack)$
 is a (strong) Morita context in the sense of \cite[Definition 3.1]{BMP}.
 In this case, we call $(M, N , X, Y)$ (or more properly the $6$-tuple above the
 definition),  a {\em weak Morita context}.
  \end{definition}

 $\mathbf{Remark.}$  Some authors use the term `weak Morita equivalence'
 for a quite different notion, namely to mean
 that the algebras have equivalent categories of Hilbert space
 representations.

\medskip

Weak Morita equivalence, as we have just defined it, is really
nothing more than the `weak$^{*}$-closure of' a strong Morita
equivalence in the sense of \cite{BMP}.  This definition includes
all examples that have hitherto been considered in the literature:

\medskip

 $\mathbf{Examples}$:

\begin{enumerate}
\item  We shall see in Corollary  \ref{isws} that every weak Morita
equivalence is an example of weak* Morita equivalence.
 \item   We shall see in Section 4 that every weak Morita
equivalence arises as follows:  Let $A, B$ be subalgebras
 of $B(H)$ and $B(K)$ respectively, for Hilbert spaces $H, K$, and
let $X \subset B(K,H), Y \subset B(H,K)$, such that the associated
subset ${\mathcal L}$ of $B(H \oplus K)$ is a subalgebra of $B(H
\oplus K)$, for Hilbert spaces $H, K$.  This is the same as
specifying a list of obvious algebraic conditions, such as $X Y
\subset A$.  Assume in addition that
$A$ possesses a cai $(e_t)$ with terms of the form $x y$, for $x \in
{\rm Ball}(R_n(X))$ and $y \in {\rm Ball}(C_n(Y))$, and $B$
possessing a cai with terms of a similar form $y x$ (dictated by
symmetry). Taking the weak* (that is, $\sigma$-weak) closure of all
these spaces clearly yields a weak Morita equivalence of
$\overline{A}^{w*}$ and $\overline{B}^{w*}$.
\item  Every weak* Morita
equivalence arises similarly to the setting in (2).
The main difference is that $A$, $B$ are unital, and $(e_t)$ is not
a cai, but $e_t \to 1_A$ weak*, and similarly for the net in $B$.
\item   Von
Neumann algebras which are Morita equivalent in Rieffel's
$W^*$-algebraic sense from \cite{RF1}, are clearly weakly Morita
equivalent.  We state this in the language of TROs. We recall that a
TRO is a subspace $Z \subset B(K,H)$ with $Z Z^* Z \subset Z$.
Rieffel's $W^*$-algebraic Morita equivalence of $W^*$-algebras $M$
and $N$ is essentially the same (see e.g.\ \cite[Section 8.5]{BLM}
for more details) as having a weak* closed TRO (that is, a {\em
WTRO}) $Z$, with $Z Z^*$ weak* dense in $M$ and $Z^* Z$ weak* dense
in $N$. Recall that $Z^* Z$ denotes the norm closure of the span of
products $z^* w$ for $z,w \in Z$. Here $(Z Z^*, Z Z^*, Z, Z^*)$ is
the weak* dense subcontext.

\item  More generally, the `tight Morita $w^*$-equivalence' of
\cite[Section 5]{BM2}, is easily seen to be a special case of weak
Morita equivalence.  In this  case, the  equivalence bimodules $X$
and $Y$
 are `selfdual'.  Indeed, this selduality is the great advantage
 of the approach of \cite[Section 5]{BM2}.

\item The second duals of strongly Morita equivalent operator algebras
 are weakly Morita equivalent. Recall that if $A$ and $B$ are
 approximately unital
 operator algebras, then $A^{**}$ and $B^{**}$ are unital dual operator
 algebras, by 2.5.6 in \cite{BLM}. If $X$ is a non-degenerate operator
 $A$-$B$-bimodule, then $X^{**}$ is a dual operator $A^{**}$-$B^{**}$-bimodule
 in a canonical way. Let $( \cdot, \cdot )$ be a bilinear map from
 $X \times Y$ to $A$ that is balanced over $B$ and is an $A$-bimodule map. Then
 notice that by 1.6.7 in \cite{BLM}, there is a unique separately $w^{*}$-continuous
 extension from $X^{**} \times Y^{**}$ to $A^{**}$, which we still call
 $( \cdot, \cdot )$.
 Now the weak Morita
 equivalence
 follows easily from the Goldstine lemma.

\item Any unital dual operator algebra $M$ is
weakly Morita
 equivalent to $\mathbb{M}_{I}(M)$, for any cardinal $I$. The
 weak* dense strong Morita subcontext in this case is
 $(M,\mathbb{K}_{I}(M),R_{I}(M),C_{I}(M))$,
whereas the equivalence bimodules $X$ and $Y$ above
 are $R_I^w(M)$ and $C_I^w(M)$ respectively.

 \item  {\em TRO equivalent} dual operator algebras $M$ and $N$,
or more generally {\em $\Delta$-equivalent algebras}, in the sense of
 \cite{Elef1,Elef2}, are weakly Morita
 equivalent.  If $M \subset B(H)$ and
$N \subset B(K)$, then TRO equivalence means
 that there
 exists a TRO $Z$ $\subset$ $B(H,K)$ such that $M$ = $[Z^{*} N Z
 ]^{\overline w^{*}}$ and $N$ = $[Z M Z^{*}]^{\overline w^{*}}.$
Eleftherakis shows that one may assume that $Z$ is a WTRO and
$1_N z = z 1_M = z$ for $z \in Z$.
Define $X$ and $Y$ to be the weak*  closures of $MZ^{*}N$
and $N Z M$ respectively.
Define $A$ and $B$ to be,
respectively,  $Z^* N Z$ and $Z M Z^*$.
Define $X'$ and $Y'$ to be,
respectively, the norm closures of $Z^* Y Z^*$ and $Z X Z$.
Since $Z$ is a TRO, $Z^{*} Z$ is
 a $C^{*}$-algebra, and so it has a contractive approximate
 identity $(e_{t})$ where $e_{t}$ = $\sum_{k=1}^{n(t)} x^t_{k} y^t_{k}$
 for some $y^t_{k}$ $\in Z$, and $x^t_k = (y^t_k)^*$. It is easy to check
 that $(e_{t})$ is a
cai for $A$, and a similar statement holds for $B$. Indeed it is
clear that $(A, B, X',
Y')$ is a weak$^{*}$-dense strong Morita subcontext of $(M, N, X,
Y)$. Hence $M$ and $N$
are weakly Morita equivalent.  We remark that  
it is proved in \cite{EP} that, in our language,
 $M$ and $N$ are weak* Morita equivalent.

\item Examples of weak and weak* Morita equivalence may also be easily built
as at the end of
  \cite[Section 6]{BJ}, from a weak* closed subalgebra $A$ of a
von Neumann algebra $M$, and a strictly positive $f \in M_+$
satisfying a certain `approximation in modulus' condition. Then the
weak linking algebra of such an example is Morita equivalent in the
same sense to $A$ (see Section 4), but they are probably not always weak* stably isomorphic.  

\item  A beautiful example from \cite{Enew} (formerly part of
\cite{Elef1}): two `similar' separably acting nest algebras 
are clearly weakly Morita equivalent by the facts presented  
around \cite[Theorem 3.5]{Enew} (Davidson's similarity theorem),
but Eleftherakis shows they need not be `$\Delta$-equivalent'
(that is, weak* stably isomorphic \cite{EP}).    \end{enumerate}

In the theory of strong Morita equivalence, and also in our paper, 
it is very important that  $N$  has some kind of `approximate identity' $(f_s)$  of the form 
 \begin{equation} \label{ffa} f_{s}  = \sum_{i=1}^{n_s} \, [y^s_i,x^s_i], \qquad
 \Vert [y^s_1, \cdots , y^s_{n_s}] \Vert \Vert [x^s_1, \cdots , x^s_{n_s}]^T
 \Vert < 1 , \end{equation}
and similarly that $M$ has a cai $(e_t)$
of form \begin{equation} \label{efa} e_t = \sum_{i=1}^{m_t} \,
(x^t_i,y^t_i), \qquad
 \Vert [x^t_1, \cdots , x^t_{m_t}] \Vert
 \Vert [y^t_1, \cdots , y^t_{m_t}]^T
 \Vert < 1 . \end{equation}
Here $x^s_i, x^t_i \in X, y^s_i, y^t_i \in Y$.

In what follows, we say, for example, that
$(\cdot,\cdot)$ is a {\em bimodule map} if $m(x,y) = (mx,y)$ and $(x,y)m = (x,ym)$
for all $x \in X, y \in Y, m \in M$.

\begin{theorem}  \label{neweq}  $(M,N,X,Y)$ is a weak* Morita context
iff the following conditions hold: there exists a separately
weak$^{*}$-continuous completely contractive $M$-bimodule map
$(\cdot,\cdot ) : X \times Y \to M$ which is balanced over $N$, and a
 separately weak$^{*}$-continuous completely contractive  $N$-bimodule
  map  $\lbrack \cdot, \cdot \rbrack : Y \times X \to N$ which is balanced over $M$,
such that $(x,y)x' = x [y,x']$ and $y' (x,y) = [y',x] y$ for $x,x'
\in X, y,y' \in Y$; and also there exist nets $(f_s)$ in $N$ and
$(e_t)$ in $M$ of the form in {\rm (3.1)} and {\rm (3.2)} above,
with $f_s \to 1_N$ and $e_t \to 1_M$ weak*.
\end{theorem}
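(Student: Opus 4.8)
The plan is to prove the two implications separately, with the sufficiency ($\Leftarrow$) direction carrying the main construction and the necessity ($\Rightarrow$) direction being where I expect the real difficulty to lie. For ($\Leftarrow$), suppose the two pairings and the nets are given. Since $(\cdot,\cdot)$ is separately weak*-continuous, completely contractive and balanced over $N$, I would invoke the universal property of the module normal Haagerup tensor product (\cite[Proposition 2.2]{EP}) to get a $w^*$-continuous complete contraction $\Phi : X \otimes^{\sigma h}_{N} Y \to M$ with $\Phi(x \otimes_{N} y) = (x,y)$; since $(\cdot,\cdot)$ is an $M$-bimodule map, so is $\Phi$. Define $\Psi : Y \otimes^{\sigma h}_{M} X \to N$ symmetrically from $[\cdot,\cdot]$. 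The task is then to show $\Phi$ (and, by the mirror-image argument, $\Psi$) is a surjective complete isometry and $w^*$-homeomorphism. First I would note that $u_t = \sum_i x^t_i \otimes_{N} y^t_i$ has $\lVert u_t \rVert \le 1$ by the bound in \eqref{efa} and $\Phi(u_t) = e_t \to 1_M$ weak*; passing to a subnet and using weak*-compactness of the ball of the dual space $X \otimes^{\sigma h}_{N} Y$ yields $u_0$ with $\lVert u_0 \rVert \le 1$ and $\Phi(u_0) = 1_M$. Because $\Phi$ is a left $M$-module map, $m = m\Phi(u_0) = \Phi(m\,u_0)$ for all $m$, so $\Phi$ is onto, and $\Lambda : m \mapsto m\,u_0$ is a $w^*$-continuous complete contraction with $\Phi \circ \Lambda = \mathrm{id}_M$.

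The heart of the argument is to verify $\Lambda \circ \Phi = \mathrm{id}$, which I would check on elementary tensors: for $x \in X, y \in Y$,
$$\Lambda(\Phi(x \otimes_{N} y)) = (x,y)\,u_0 = \lim_t \sum_i (x,y)x^t_i \otimes_{N} y^t_i = \lim_t \sum_i x[y,x^t_i] \otimes_{N} y^t_i ,$$
using the relation $(x,y)x' = x[y,x']$. Balancing over $N$ and then applying $[y,x^t_i]y^t_i = y(x^t_i,y^t_i)$ (the relation $y'(x,y) = [y',x]y$) turns this into $\lim_t x \otimes_{N} (y e_t) = x \otimes_{N} y$, by separate weak*-continuity of the canonical map together with $y e_t \to y$. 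Since $\Lambda\Phi$ and the identity are $w^*$-continuous and agree on the $w^*$-dense span of elementary tensors (Corollary \ref{ballmod}), they coincide. Hence $\Phi^{-1} = \Lambda$ is a $w^*$-continuous complete contraction, so $\Phi$ is a completely isometric, $w^*$-homeomorphic $M$-bimodule isomorphism; the same argument run with $(f_s)$ gives $N \cong Y \otimes^{\sigma h}_{M} X$, so $(M,N,X,Y)$ is a weak* Morita context.

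For ($\Rightarrow$), I would fix the defining bimodule isomorphisms and set $(x,y) := \Phi(x \otimes_{N} y)$ and $[y,x] := \Psi(y \otimes_{M} x)$; separate weak*-continuity, complete contractivity, balancing, and the bimodule-map property are then inherited from the canonical quotient maps and from $\Phi,\Psi$ being bimodule maps. Applying Corollary \ref{ballmod} to $\Phi^{-1}(1_M)$ gives a net in $\mathrm{Ball}(X \otimes_{hN} Y)$ converging weak* to it; writing its members as finite sums with the Haagerup factorization and applying $\Phi$ produces $e_t = \sum_i (x^t_i,y^t_i) \to 1_M$ of the form \eqref{efa}, and symmetrically $f_s \to 1_N$ of the form \eqref{ffa}. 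The delicate point, and what I expect to be the genuine obstacle, is the two associativity relations $(x,y)x' = x[y,x']$ and $y'(x,y) = [y',x]y$: these do \emph{not} follow formally from the existence of $\Phi$ and $\Psi$, since one is free to compose either isomorphism with a bimodule automorphism, so a purely tensor-theoretic argument through Proposition \ref{asso2} and Lemma \ref{C} stalls (the two evident collapsing maps $X \otimes^{\sigma h}_{N} Y \otimes^{\sigma h}_{M} X \to X$ need not agree without the relation). The clean way around this is to realise the whole context concretely, representing $M,N,X,Y$ as the corners of a subalgebra of $B(H \oplus K)$ (the weak linking algebra developed in Section 4) in such a way that both pairings become honest composition of operators; associativity of operator multiplication then delivers both relations simultaneously, and in fact re-derives the bimodule and balancing identities. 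Thus the two places demanding care are the $\Lambda\Phi = \mathrm{id}$ computation in sufficiency and, more seriously, the production of \emph{compatible} pairings in necessity, the latter forcing one to pass through a concrete linking-algebra representation rather than reasoning abstractly from the isomorphisms alone.
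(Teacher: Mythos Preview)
Your $(\Leftarrow)$ argument is correct and is a pleasant variant of the paper's. The paper does not build an explicit inverse $\Lambda$; instead it proves the key identity $\pi(u)\,x\otimes_N y = u\,(x,y)$ for all $u\in X\otimes^{\sigma h}_N Y$ (your computation with the two associativity relations, extended by weak* density), then defines $\rho_t(m)=\sum_i m x_i^t\otimes_N y_i^t$ and observes $\rho_t\circ\pi([u_{jk}])=[u_{jk}e_t]\buildrel w^*\over\to[u_{jk}]$, whence $\Vert[u_{jk}]\Vert\le\Vert\pi([u_{jk}])\Vert$ by lower semicontinuity of the norm. Your route---extracting a weak* cluster point $u_0$ and showing $m\mapsto m\,u_0$ is a two-sided inverse---reaches the same conclusion and is arguably cleaner, though you should say explicitly that $m\mapsto m\,u_0$ is weak*-continuous (Lemma \ref{D} or \cite[Lemma 2.3]{EP}) so that $\Lambda\Phi=\mathrm{id}$ propagates from elementary tensors.

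Your $(\Rightarrow)$ direction has a genuine gap. You correctly diagnose that the associativity relations $(x,y)x'=x[y,x']$ and $y'(x,y)=[y',x]y$ do not follow formally from the two bimodule isomorphisms, and that this is the real obstacle. But your proposed cure---passing to the weak linking algebra of Section 4 and reading the pairings off as operator composition---is circular. The linking algebra $\mathcal{L}^w$ is defined with ``the obvious multiplication'' on $2\times 2$ matrices, and associativity of that multiplication is \emph{exactly} the relation $(x,y)x'=x[y,x']$; Section 4 explicitly takes the full 6-tuple of Theorem \ref{neweq} as standing hypothesis. Concretely, if you represent $M\subset B(H)$, set $K=Y\otimes^{\sigma h}_M H^c$, and define $\Phi(y)\zeta=y\otimes\zeta$ and $\Psi(x)(y'\otimes\zeta)=(x,y')\zeta$, then $\Phi(y)\Psi(x)(y'\otimes\zeta)=y(x,y')\otimes\zeta$, and identifying this with the action of $[y,x]\in N$ already \emph{requires} $y(x,y')=[y,x]y'$. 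So the linking-algebra representation does not manufacture compatibility---it presupposes it.

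The paper's fix is different in kind: one keeps the naive pairings coming from the isomorphisms, observes (via the two collapsing maps $X\otimes^{\sigma h}_N Y\otimes^{\sigma h}_M X\to X$ you mention) that they differ by a weak*-continuous $M$-bimodule automorphism of $M$, which is left multiplication by a unitary $u\in M$, and then replaces $(\cdot,\cdot)$ by $u^{-1}(\cdot,\cdot)$. This is the argument of \cite[Proposition 1.3]{DB4}, transported to the dual setting (and in fact slightly simpler here, since the intertwining map is weak*-continuous and $w^*CB_M(M)\cong M$). That is the missing ingredient in your proposal.
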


\begin{proof}
($\Leftarrow$) \ Under these conditions, we first claim that if  $\pi : X \otimes^{\sigma h}_{N} Y$ $\to$ $M$
is the canonical ($w^{*}$-continuous)
$M$-$M$-bimodule map induced by $(\cdot,\cdot)$, then $\pi(u) x \otimes_{N} y $ = $u
(x,y)$ for all $x \in X, y \in Y$, and $u \in X \otimes^{\sigma
h}_{N} Y$.  To see this, fix  $x \otimes_{N} y$ $\in$ $X \otimes^{\sigma h}_{N} Y $. Define
$f,g : X \otimes^{\sigma h}_{N} Y$ $\to$ $X \otimes^{\sigma h}_{N}
Y$ :
 $f(u) = u (x,y)$ and $g(u)  = \pi(u) x \otimes_{N} y $  where $u$ $\in$
 $X \otimes^{\sigma h}_{N} Y$. We need to show that $f$ = $g$. Since
 $X \otimes_{hN} Y$ is $w^{*}$-dense in $X \otimes^{\sigma h}_{N} Y$,
  and $f,g$
 are $w^{*}$-continuous, it is enough to check that
 $f$ = $g$ on $X \otimes_{hN} Y$. For $u$ = $x' \otimes_{N} y'$, we
 have
 $$u(x,y) = x' \otimes_{N} y' (x,y) = x' \otimes_{N} [y',x]y =
 x' [y',x] \otimes_{N} y = (x',y')x \otimes_{N} y = \pi(u)x \otimes_{N}
 y,$$ as desired in the claim.

To see that $M \cong X \otimes^{\sigma h}_{N} Y$,
we shall show that  $\pi$ above is a
 complete isometry.   Since $M$ =  Span${( \cdot, \cdot)}^{-w^{*}}$, it will
 follow from the Krein-Smulian theorem that $\pi$ maps onto $M$.    Choose
 an approximate identity $(e_{t})$ for $A$ of the form in (\ref{efa}).
Define $\rho_{t} : M \to X \otimes^{\sigma h}_{N} Y$ : $\rho_{t} (m)
=
 \sum_{i=1} ^{n_t} m x_{i}^{t} \otimes_{N} y_{i}^{t} $.  For $[u_{jk}] \in
 M_n(X \otimes^{\sigma h}_{N} Y)$, we have by the last paragraph that
 $$\rho_{t}  \circ \pi([u_{jk}]) =
 [ \sum_{i=1} ^{n_t}  \pi(u_{jk}) x_{i} ^{t} \otimes_{N} y_{i}^{t}]
 =  [\sum_{i=1} ^{n_t} u_{jk} (x_{i}^{t}, y_{i}^{t})]
 = [u_{jk} e_{t}]  \buildrel w^* \over \to [u_{jk}] ,$$
the convergence by \cite[Lemma 2.3]{EP}.  Since $\rho_{t}$ is
 completely contractive, we have
$$\lVert [u_{jk}e_{t}] \rVert = 
\lVert (\rho_{t} \circ \pi)([u_{jk}]) \rVert \leq \lVert \pi([u_{jk}]) \rVert .$$
  As $[u_{jk}]$ is the $w^{*}$-limit of the net $([u_{jk}e_{t}])_{t}$, by
 Alaoglu's theorem we deduce that
 $\lVert [u_{jk}] \rVert $ $\leq$ $\lVert \pi([u_{jk}]) \rVert$.
Similarly, $N \cong Y \otimes^{\sigma h}_{M} X$.

($\Rightarrow$)  \  The existence of the nets $(f_s)$ and $(e_t)$
follows from Corollary \ref{ballmod}.  Define $(\cdot,\cdot)$ to be
 the composition of the canonical map $X \times Y \to X \otimes^{\sigma h}_{N} Y$
with the isomorphism of the latter space with $M$.  Similarly one 
obtains $[\cdot,\cdot]$, and these maps have all the desired 
properties except the relations $(x,y)x' = x [y,x']$ and
$y' (x,y) = [y',x] y$.    To obtain these we have to adjust $(\cdot,\cdot)$ by multiplying 
it by a certain unitary in $M$, as in the proof of
\cite[Proposition 1.3]{DB4}.   Indeed that proof transfers easily to our
present setting, and in fact becomes  slightly simpler, since in the latter proof 
the map called $T$ is weak* continuous in our case, and $w^*CB_M(M) \cong M$.   
\end{proof}

\begin{corollary} \label{isws}  Every weak Morita context is
a weak* Morita context.
\end{corollary}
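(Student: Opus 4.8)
The plan is to reduce everything to the criterion of Theorem~\ref{neweq}. So I would start from a weak Morita context $(M,N,X,Y,(\cdot,\cdot),[\cdot,\cdot])$ and fix a weak$^*$-dense strong Morita subcontext $(A,B,X',Y',(\cdot,\cdot),[\cdot,\cdot])$ as provided by Definition~\ref{basicdef}. The crucial observation is that the pairings $(\cdot,\cdot) : X \times Y \to M$ and $[\cdot,\cdot] : Y \times X \to N$ are, by the standing hypotheses fixed at the start of the section, already separately weak$^*$-continuous and completely contractive; they are the \emph{same} maps whose restrictions make the subcontext a strong Morita context. Thus it suffices to verify the remaining conditions in Theorem~\ref{neweq}: that $(\cdot,\cdot)$ is an $M$-bimodule map balanced over $N$, that $[\cdot,\cdot]$ is an $N$-bimodule map balanced over $M$, that the relations $(x,y)x' = x[y,x']$ and $y'(x,y) = [y',x]y$ hold on all of $X$ and $Y$, and that there exist nets of the forms \eqref{ffa} and \eqref{efa} converging weak$^*$ to the identities.

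Each of these algebraic identities holds on the dense subcontext, since this is precisely the content of the axioms for a strong Morita context in \cite[Definition 3.1]{BMP}: there $(\cdot,\cdot)$ is an $A$-bimodule map balanced over $B$, symmetrically for $[\cdot,\cdot]$, and the two displayed relations are the linking-algebra associativity conditions. To promote each identity to the full context I would extend one variable at a time, using the weak$^*$ density of $A,B,X',Y'$ in $M,N,X,Y$ together with the separate weak$^*$ continuity of the pairings and of the (dual operator) bimodule actions. For example, to obtain $(mx,y) = m(x,y)$, first fix $x \in X'$ and $y \in Y'$ and note that both sides are weak$^*$-continuous in $m$ and agree for $m \in A$, hence for all $m \in M$; then fix $m \in M$ and $y \in Y'$ and repeat in the $X'$-variable, and finally in the $Y'$-variable. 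The balancing, the right-module identity, and the two associativity relations are handled in exactly the same fashion.

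Next I would produce the nets. From the theory of strong Morita contexts (the linking algebra of a strong context has a contractive approximate identity, which may be chosen in diagonal form; see \cite{BMP,BMN}), $A$ has a cai $(e_t)$ of the form \eqref{efa} and $B$ has a cai $(f_s)$ of the form \eqref{ffa}. It then remains to check $e_t \to 1_M$ and $f_s \to 1_N$ weak$^*$. Since the $e_t$ are contractive, any weak$^*$ cluster point $e$ of $(e_t)$ lies in $\mathrm{Ball}(M)$; for fixed $a \in A$ one has $e_t a \to a$ in norm, while along the convergent subnet $e_t a \to ea$ weak$^*$ by separate weak$^*$ continuity of multiplication in $M$, so $ea = a$. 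As $A$ is weak$^*$ dense in $M$ and multiplication is separately weak$^*$ continuous, $em = m$ for all $m \in M$, whence $e = e\,1_M = 1_M$; since every subnet of $(e_t)$ has a further subnet converging to $1_M$, the whole net converges weak$^*$ to $1_M$. The argument for $(f_s)$ is identical.

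With all hypotheses of Theorem~\ref{neweq} verified, that theorem yields that $(M,N,X,Y)$ is a weak$^*$ Morita context, which is the assertion. I expect the one genuinely delicate point to be the weak$^*$-density extension of the algebraic relations in the middle step: although each passage is routine, it relies essentially on the separate weak$^*$ continuity built into the definitions of the pairings and of dual operator bimodules, and it is exactly where the hypothesis that the subcontext is weak$^*$ dense does its work. The extraction of the diagonal cai's from the strong context and their weak$^*$ convergence to the identities is comparatively standard.
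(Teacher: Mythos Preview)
Your proposal is correct and follows essentially the same route as the paper: verify the hypotheses of Theorem~\ref{neweq} by extending the balanced/bimodule/associativity identities from the dense subcontext via separate weak$^*$ continuity, and obtain the nets from the cai's of $A$ and $B$ (the paper cites \cite[Lemma~2.9]{BMP} for their special form), showing they converge weak$^*$ to the identities by the same subnet argument you give.
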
 

\begin{proof}  
Let $(M,N,X,Y,( \cdot, \cdot ), \lbrack \cdot,\cdot \rbrack)$
be a weak Morita context with strong Morita subcontext $(A,B,X',Y')$.
 If  $(f_s)$ is a cai  for $B$ it is
clear that $f_s \to 1_N$ weak*. Indeed if a subnet  $f_{s_\alpha}
\to f$ in the weak$^{*}$-topology in $N$, then
  $bf$ = $b$ for all $b \in B$. By  weak$^{*}$-density it
 follows that $bf = b$ for all $b \in N$. Similarly $fb = b$. Thus
 $f = 1_{N}$. 
By Lemma 2.9 in \cite{BMP} we may choose $(f_s)$ of the form
(\ref{ffa}), and similarly $A$ has a cai $(e_t)$
of form in (\ref{efa}).
That $(x,y)x' = x [y,x']$ and $y' (x,y) = [y',x] y$ for
$x,x' \in X, y,y' \in Y$, follows by weak* density, and from
the fact that the analogous relations hold in $X'$ and $Y'$.
Similarly one sees that
$(\cdot,\cdot)$ and $[\cdot,\cdot]$ are balanced bimodule maps.  
\end{proof}   

A key point for us, is that the condition involving (\ref{ffa}) 
in Theorem \ref{neweq}  
becomes a powerful tool when expressed in terms of an `asymptotic factorization' of
$I_Y$ through spaces of the form $C_n(N)$ (or $C_n(B)$ in the case
of a weak Morita equivalence).  Indeed, define $\varphi_s(y)$ to be
the column $[(x_j^s,y)]_j$  in $C_{n_s}(N)$, for $y$ in $Y$, and
define $\psi_s([b_j]) = \sum_j \, y_j^s b_j$ for $[b_j]$ in
$C_{n_s}(N)$. Then $\psi_s(\varphi_s(y)) = f_s y \to y$ weak* if $y
\in Y$ (or in norm if $y \in Y'$, in the case of a weak Morita
equivalence, in which case we can replace $C_{n_s}(N)$ by $C_n(B)$).
Similarly, the condition involving (\ref{efa}) may be expressed
 in terms of an `asymptotic factorization' of $I_X$ through
spaces of the form $R_n(N)$ (or $R_n(B)$ in the `weak Morita' case),
or through $C_n(M)$ (or $C_n(A)$).

Henceforth in this section,  let $(M,N,X,Y,( \cdot,\cdot ),\lbrack
\cdot, \cdot \rbrack)$ be as in Theorem \ref{neweq}. We will also
refer to this $6$-tuple as the weak* Morita context.

\begin{theorem} \label{omod}
Weak* Morita equivalent dual operator algebras have equivalent
 categories of dual operator modules.
\end{theorem}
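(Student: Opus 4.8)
The plan is to construct an explicit functor in each direction and show that they are mutually inverse up to natural isomorphism. Given the weak* Morita context $(M,N,X,Y,(\cdot,\cdot),[\cdot,\cdot])$, I would define functors
\[
F = Y \otimes^{\sigma h}_{M} (-) : \; _{M}\mathcal{R} \to \; _{N}\mathcal{R}, \qquad
G = X \otimes^{\sigma h}_{N} (-) : \; _{N}\mathcal{R} \to \; _{M}\mathcal{R}.
\]
On objects these are well-defined by Lemma \ref{D} (the result is a left dual operator module over the correct algebra), and on morphisms one uses Corollary \ref{F}: a $w^*$-continuous completely bounded $M$-module map $u : Z_1 \to Z_2$ is sent to $I_Y \otimes u$, which is $w^*$-continuous, completely bounded, and an $N$-module map. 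Functoriality (preservation of composition and identities) is then immediate from the behaviour of $\otimes$ on maps.

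The core of the argument is that $G \circ F \cong \mathrm{Id}$ and $F \circ G \cong \mathrm{Id}$ naturally. First I would compute, for $Z \in \; _{M}\mathcal{R}$,
\[
G(F(Z)) = X \otimes^{\sigma h}_{N} (Y \otimes^{\sigma h}_{M} Z)
\cong (X \otimes^{\sigma h}_{N} Y) \otimes^{\sigma h}_{M} Z
\cong M \otimes^{\sigma h}_{M} Z \cong Z,
\]
where the first isomorphism is associativity (Proposition \ref{asso2}), the second is the defining property of the weak* Morita context ($X \otimes^{\sigma h}_{N} Y \cong M$ as dual operator $M$-bimodules, Definition \ref{w*Mor}), and the last is Lemma \ref{C}. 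The symmetric computation gives $F(G(Z)) \cong Z$. Each of these isomorphisms is completely isometric and $w^*$-homeomorphic, hence is an isomorphism in the relevant category of dual operator modules.

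The main obstacle, and the step requiring genuine care rather than assembly of earlier lemmas, is \emph{naturality}: I must verify that the isomorphism $\eta_Z : G(F(Z)) \to Z$ is natural in $Z$, i.e.\ that for every morphism $u : Z_1 \to Z_2$ the square relating $\eta_{Z_1}, \eta_{Z_2}$ and $(I_X \otimes I_Y \otimes u)$ commutes. The reason this is delicate is that the associativity isomorphism of Proposition \ref{asso2} and the bimodule isomorphism $X \otimes^{\sigma h}_{N} Y \cong M$ are identified via universal properties, so commutativity of the naturality square is cleanest to check on the $w^*$-dense subset of elementary tensors $x \otimes_N y \otimes_M z$, where the maps act by recognizable formulas, and then extended to all of $G(F(Z))$ using $w^*$-continuity of all maps involved together with the density from Corollary \ref{ballmod}. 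I would also record that the composite natural isomorphisms $G F \cong \mathrm{Id}$ and $F G \cong \mathrm{Id}$ are compatible in the sense required for an equivalence; this follows because all the constituent isomorphisms are the canonical ones arising from universal properties, so the relevant coherence diagrams commute automatically on elementary tensors. Once naturality is in hand, $F$ and $G$ furnish the desired equivalence of categories.
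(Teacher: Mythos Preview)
Your proposal is correct and follows essentially the same approach as the paper: both define the functors via $X \otimes^{\sigma h}_{N} (-)$ and $Y \otimes^{\sigma h}_{M} (-)$ (your $F,G$ are the paper's $\mathcal{G},\mathcal{F}$), invoke Lemma~\ref{D} and Corollary~\ref{F} for well-definedness, and then use associativity (Proposition~\ref{asso2}) together with Lemma~\ref{C} and the defining isomorphisms of the weak* Morita context to show the composites are naturally isomorphic to the identity. The only difference is that you spell out the naturality verification on elementary tensors, whereas the paper simply defers this routine step to the analogous argument in \cite[Theorem~3.9]{BMP}.
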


\begin{proof}
 If $Z \in$ $ _{N} \mathcal{R} $ and if
 $ \mathcal{F}(Z)$ = $X \otimes^{ \sigma h}_{ N} Z$,
 then  $\mathcal{F}(Z)$ is a  left dual operator
 $M$-module by Lemma \ref{D}.  That is,
  $ \mathcal{F}(Z) \in $  $_{M} \mathcal{R}$.
 Further, if $T \in  w^* CB_{N}(Z,W)$, for $Z,W \in$ $ _{N} \mathcal{R}$,
 and if
 $\mathcal{F}(T)$ is defined to be
 $I \otimes_{N} T : \mathcal{F}(Z) \rightarrow \mathcal{F}(W) $,
 then by the functoriality of the normal
 module Haagerup tensor product we have
 $\mathcal{F}(T) \in \, w^{*}CB_{M}(\mathcal{F}(Z), \mathcal{F}(W) )$,  and
 $\lVert \mathcal{F}(T) \rVert_{cb}$
 $\leq$ $ \lVert T \rVert_{cb}$. Thus $\mathcal{F}$ is a contractive
 functor from $_{N} \mathcal{R}$ to $_{M} \mathcal{R}$. Similarly,
  we obtain
 a contractive functor $\mathcal{G}$ from $_{M} \mathcal{R}$ to
 $_{N} \mathcal{R}$.  Namely, $\mathcal{G}(W)$ = $Y \otimes^{\sigma h}_{M} W$,
 for $W \in $ $_{M} \mathcal{R}$, and $\mathcal{G}(T)$ =
 $I \otimes_{M} T$ for $T \in w^*CB_{M}(W,Z)$ with
 $W, Z $ $\in$ $ _{M} \mathcal{R}$.
 Similarly, it is easy to check that
  these functors are completely contractive; for
 example, $T \mapsto \mathcal{F}(T)$ is a completely contractive map on
 each space $w^*CB_{N}(Z,W)$ of morphisms. If we compose $\mathcal{F}$
 and $\mathcal{G}$, we find that for $Z \in$ $_{N} \mathcal{R}$ we have
 $\mathcal{G}(\mathcal{F}(Z)) \in $ $_{N} \mathcal{R}$.  By  Proposition \ref{asso2}
and Lemma \ref{C}, we have
$$\mathcal{G}(\mathcal{F}(Z))  \cong Y \otimes^{\sigma h}_{M}
(X \otimes^{\sigma h}_{N} Z)
 \cong  (Y \otimes^{\sigma h}_{M} X) \otimes^{\sigma h}_{N} Z
\cong  N \otimes^{\sigma h}_{N} Z \cong  Z.  $$ \noindent where the
isomorphisms are completely isometric. The rest of
 the proof follows as in Theorem 3.9 in \cite{BMP}.
\end{proof}

{\bf Remark.}  We imagine that the ideas of \cite{DB4} show that the converse 
of the last theorem is true, and hope to pursue this in the future.

\medskip
 
We shall adopt the convention
 from algebra of writing maps on the side opposite the one on which ring
 acts on the module. For example a left $A$-module map will be written
 on the right and a right $A$-module map will be written on the left.
  The pairings and actions arising in the weak Morita context
 give rise to eight maps:

\begin{align*}
  R_{N}:  N  &\to  CB_{M}(X,X),      & xR_{N}(b) &=  x \cdot b \\
  L_{N}:  N  &\to  CB(Y,Y)_{M},      & L_{N}(b)y &=   b \cdot y \\
  R_{M}:  M  &\to  CB_{N}(Y,Y),      & yR_{M}(a) &=   y \cdot a \\
  L_{M}:  M  &\to  CB(X,X)_{N},      & L_{M}(a)x &=   a \cdot x \\
 R^{M}:  Y &\to  CB_{M}(X,M),      & xR^{M}(y) &=   ( x,y ) \\
 L^{N}:  Y &\to  CB(X,N)_{N},      & L^{N}(y)x &=   [y,x] \\
 R^{N}:  X &\to  CB_{N}(Y,N),      & yR^{M}(x) &=   [y,x] \\
 L^{M}:  X &\to  CB(Y,M)_{M},      & L^{M}(x)y &=   ( x,y )
  \notag
\end{align*}

The first four maps are completely contractive since  module actions
 are completely contractive. Also the maps $L_{N}$ and $L_{M}$ are
 homomorphisms and $R_{N}$ and $R_{M}$ are anti-homomorphisms.
 Similar proofs to the analogous results in \cite{BMP}
 show that $R^M, L^{N}$, $ R^{N}$, and $ L^{M}$ are completely contractive.

\begin{theorem} \label{three}
 If $(M, N , X, Y, ( \cdot, \cdot ), \lbrack \cdot, \cdot \rbrack )$ is
 a weak* Morita context, then each of the maps $R^{M}$, $R^{N}$, $L^{M}$
 and $L^{N}$ is a weak* continuous complete isometry. The range of
  $R^{M}$ is $w^{*}$$ CB_{M}(X,M)$, with similar assertions holding
   for $R^{N}$, $L^{M}$ and
  $L^{N}$. The map $L_{N}$ (resp.  $R_{N}$) is a $w^*$-continuous
  completely isometric
 isomorphism (resp.  anti-isomorphism) onto the
 $w^{*}$-closed left (resp. right) ideal  $w^*CB(Y)_M$
(resp.\ $w^*CB_M(X)$).  The latter also equals the left multiplier
algebra (see \cite[Chapter 4]{BLM}) ${\mathcal M}_{\ell}(Y)$ (resp.\
${\mathcal M}_r(X)$). Similar results hold for
 $L_{M}$ and $R_{M}$.
\end{theorem}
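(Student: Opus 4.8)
The plan is to treat the four ``pairing'' maps $R^{M},R^{N},L^{M},L^{N}$ first, and then the four ``action'' maps $R_{N},L_{N},R_{M},L_{M}$, in each case mirroring the corresponding arguments of \cite{BMP} but systematically replacing norm limits by weak* limits. The engine throughout will be the asymptotic factorizations of $I_{Y}$ and $I_{X}$ recorded just above the theorem: writing $\alpha_{s}=[x_{1}^{s},\cdots,x_{n_{s}}^{s}]$ and $\beta_{s}=[y_{1}^{s},\cdots,y_{n_{s}}^{s}]^{T}$, we have $\Vert\alpha_{s}\Vert\,\Vert\beta_{s}\Vert<1$, while $\psi_{s}\circ\varphi_{s}$ is left multiplication by $f_{s}$ and converges weak* to $I_{Y}$ (and symmetrically for $I_{X}$, using the net $(e_{t})$ of form (\ref{efa})).

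Take $R^{M}$ as the model case. The upper bound for the complete isometry is the complete contractivity already noted above. For the lower bound I would, for a matrix $[y_{kl}]\in M_{p}(Y)$, factor $f_{s}\cdot[y_{kl}]=\psi_{s}^{(p)}(\varphi_{s}^{(p)}([y_{kl}]))$, where $\varphi_{s}$ is built by applying $R^{M}(\cdot)$ entrywise to the column $\alpha_{s}^{T}$; this gives $\Vert f_{s}[y_{kl}]\Vert\leq\Vert\beta_{s}\Vert\,\Vert[R^{M}(y_{kl})]\Vert_{cb}\,\Vert\alpha_{s}\Vert<\Vert[R^{M}(y_{kl})]\Vert_{cb}$, and since $f_{s}[y_{kl}]\to[y_{kl}]$ weak*, Alaoglu's theorem (exactly as in the proof of Theorem \ref{neweq}) yields $\Vert[y_{kl}]\Vert\leq\Vert[R^{M}(y_{kl})]\Vert_{cb}$. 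That $R^{M}(y)$ lands in $w^{*}CB_{M}(X,M)$ is immediate from the separate weak* continuity and left $M$-module property of $(\cdot,\cdot)$. For surjectivity onto $w^{*}CB_{M}(X,M)$, given a weak* continuous left $M$-module map $T:X\to M$ I would let $y_{T}$ be a weak* cluster point of the bounded net $\big(\sum_{i}y_{i}^{s}(x_{i}^{s}T)\big)_{s}$ (bounded by $\Vert\alpha_{s}\Vert\,\Vert\beta_{s}\Vert\,\Vert T\Vert_{cb}<\Vert T\Vert_{cb}$) and verify $R^{M}(y_{T})=T$: using $(x,ym)=(x,y)m$, the left $M$-linearity of $T$, and the relation $(x,y)x'=x[y,x']$, the expression $(x,\sum_{i}y_{i}^{s}(x_{i}^{s}T))$ collapses to $(xf_{s})T$, which tends to $xT$ because $xf_{s}\to x$ weak* and $T$ is weak* continuous. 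The maps $R^{N},L^{M},L^{N}$ are handled identically, using the factorization of $I_{X}$ where appropriate.

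The four action maps are treated the same way. That $L_{N},L_{M}$ are homomorphisms and $R_{N},R_{M}$ anti-homomorphisms was already observed; complete isometry follows by the same factorization estimate, now recovering $b\in N$ as the weak* limit of $bf_{s}=\sum_{i}[L_{N}(b)y_{i}^{s},x_{i}^{s}]$ (using $b[y,x]=[by,x]$). For surjectivity of $L_{N}$ onto $w^{*}CB(Y)_{M}$, given a weak* continuous right $M$-module map $S$ on $Y$ I would set $b_{S}=\lim_{s}\sum_{i}[Sy_{i}^{s},x_{i}^{s}]$; the relation $[y',x]y=y'(x,y)$ together with right $M$-linearity of $S$ turns $b_{S}y$ into $\lim_{s}S(f_{s}y)=Sy$, giving $L_{N}(b_{S})=S$. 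Weak* continuity of all eight maps I would obtain by Krein--Smulian: each is a bounded map between dual operator spaces, and on the unit ball weak* continuity reduces, via a description of the preduals of the $w^{*}CB$ spaces as completions of elementary tensors $z\otimes\rho$ (with $\rho$ in $M_{*}$ or $Y_{*}$ as appropriate), to the separate weak* continuity of the pairings and module actions. Finally, the identification of $w^{*}CB(Y)_{M}$ with the left multiplier algebra ${\mathcal M}_{\ell}(Y)$ (and $w^{*}CB_{M}(X)$ with ${\mathcal M}_{r}(X)$), and the one-sided ideal structure, I would transfer from the multiplier computations of \cite{BMP} and \cite[Chapter 4]{BLM}; weak* closedness is then automatic, since $L_{N}$ being a weak* homeomorphism carries the weak*-compact ball of $N$ onto the ball of $w^{*}CB(Y)_{M}$.

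I expect the main obstacle to be bookkeeping the weak* topology rather than any single hard estimate. Concretely, the delicate points are pinning down the preduals of $w^{*}CB_{M}(X,M)$ and $w^{*}CB(Y)_{M}$ precisely enough to run Krein--Smulian, and checking that the cluster points $y_{T}$ and $b_{S}$ are forced (independent of the chosen subnet) by the limit identities $(x,y_{T})=xT$ and $b_{S}y=Sy$. The algebraic identities $(x,y)x'=x[y,x']$ and $y'(x,y)=[y',x]y$ do all the heavy lifting in collapsing the factorized expressions, so once the weak* continuity of the constructed inverse maps is secured, the identification of ranges and the multiplier-algebra statements should follow as in \cite{BMP}.
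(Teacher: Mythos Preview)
Your approach is correct, but it differs from the route the paper actually takes. The paper explicitly remarks that ``most of this can be proved directly, as in \cite[Theorem~4.1]{BMP}'' --- which is precisely your strategy of direct factorization estimates using the nets $(e_{t})$ and $(f_{s})$ --- and then says ``Instead we will deduce it from the functoriality (Theorem~\ref{omod}).'' Concretely, the paper obtains the complete isometries in one line via the categorical equivalence: for example
\[
M \;\cong\; w^{*}CB_{M}(M) \;\cong\; w^{*}CB_{N}(\mathcal{F}(M)) \;\cong\; w^{*}CB_{N}(Y),
\]
and observes the composite is $R_{M}$. Weak* continuity is then checked on bounded nets (separate weak* continuity of the module action plus Krein--Smulian), the left-ideal property from the identity $T\,L_{N}([y,x]) = L_{N}([Ty,x])$ and weak* density, and the multiplier identification from the sandwich $N \to \mathcal{M}_{\ell}(Y) \to w^{*}CB(Y)_{M}$: both arrows are completely contractive and weak* continuous by general multiplier theory \cite[Chapter~4]{BLM}, and since the composite is a completely isometric isomorphism, so is each factor. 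Your direct approach buys self-containment and avoids leaning on Theorem~\ref{omod}; the paper's buys brevity. One place where your outline is a bit soft is the multiplier-algebra step: ``transfer from \cite{BMP}'' will not quite do it, since the point is the dual/weak* version of the sandwich just described, which needs \cite[Theorem~4.7.4]{BLM} rather than the norm-topology arguments of \cite{BMP}.
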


\begin{proof}   Most of this can be proved directly, as in
\cite[Theorem 4.1]{BMP}.  Instead we will deduce it from the
functoriality (Theorem \ref{omod}).  For example, because of the equivalence of
categories
via the functor ${\mathcal F} = Y \otimes^{\sigma h}_M
-$, we have completely isometrically:
$$M \cong \, w^*CB_M(M) \cong \, w^*CB_N({\mathcal
F}(M)) \cong \, w^*CB_N(Y) ,$$ and the composition of these maps is
easily seen to be $R_M$. Thus $R_M$ is a  complete isometry. Similar
proofs work for the other seven
 maps.  To see that $L_{N}$ is $w^{*}$-continuous, for example, let $(b_{t})$
be a bounded net in
 $N$ converging in the $w^{*}$-topology of $N$ to $b$ $\in N$. Then
 $L_{N}(b_{t})$ is a bounded net in $CB(Y)_{M}$. As the module action is
 separately $w^{*}$-continuous, it is easy to see that
 $L_{N}(b_{t})$ converges to $ L_{N}(b)$ in the $w^{*}$-topology.  Thus
 $L_N$ is a $w^{*}$-continuous isometry with  $w^{*}$-closed range,
  by the Krein-Smulian theorem.
 To see that its range is a left ideal simply
 use the weak$^{*}$-density of the span of terms $[y,x]$ in $N$, and the
 equation $T L_{N}([y,x])(y')$ = $ L_{N}[Ty, x](y')$ for  $T$
 $\in CB(Y,Y)_{M}$, $ y' \in Y$.  We leave the variants for the other maps
 to the reader.

 To see the
assertions involving multiplier algebras, note that we have obvious
completely contractive maps
$$N \longrightarrow {\mathcal M}_{\ell}(Y)  \longrightarrow
w^{*} CB(Y)_{M} .$$ The first of these arrows arises since $Y$ is a
left operator $N$-module (see \cite[Theorem 4.6.2]{BLM}).  The
second arrow always exists by general properties (see e.g.\
\cite[Chapter 4]{BLM}, or Theorem 4.1 in \cite{BM2}) of the left
multiplier algebra of a dual operator module.
  Both arrows are weak* continuous by
e.g.\ Theorem 4.7.4 (ii) and 1.6.1 in \cite{BLM}. Since $N  \cong
w^{*} CB(Y)_{M}$ completely isometrically and
$w^*$-homeomorphically, we deduce that these spaces coincide with
${\mathcal M}_{\ell}(Y)$ too.
 \end{proof}

{\bf Remark.}   Note that in the case of weak Morita equivalence,
 $CB_{A}(X')$ is an operator algebra (\cite{BMP}, Theorem
 4.9). It is not true in general that $CB_{M}(X)$ is an operator
 algebra,
as we show in \cite{UK}.  Nonetheless,  the above shows that $w^{*}
CB_M(X)$ is a dual operator algebra ($\cong N$).

\begin{theorem}
If $M$ and $N$ are weak* Morita equivalent dual operator algebras,
then their centers are completely isometrically isomorphic via a
$w^*$-homeomorphism.
\end{theorem}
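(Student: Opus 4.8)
The strategy I would use is to realize both $Z(M)$ and $Z(N)$ concretely inside a single space: the weak*-continuous completely bounded $M$-$N$-bimodule endomorphisms of the equivalence bimodule $X$, which I will write as $w^*CB_M(X)_N = w^*CB_M(X) \cap w^*CB(X)_N$. The guiding idea is that a central element of $M$ acts on $X$ as such a bimodule map, and Theorem \ref{three} tells me precisely which bimodule maps arise in this way. This is the concrete shadow of the fact that the equivalence of categories ${}_M\mathcal{R}\simeq{}_N\mathcal{R}$ from Theorem \ref{omod} must match up the centers of the two categories.

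First I would handle the $M$ side. For $z\in Z(M)$ the left multiplication $L_M(z):x\mapsto z\cdot x$ automatically lies in $w^*CB(X)_N$, and since $z$ is central it is also left $M$-linear, because $z\cdot(m\cdot x)=(zm)\cdot x=(mz)\cdot x=m\cdot(z\cdot x)$. Conversely, by Theorem \ref{three} every element of $w^*CB(X)_N$ equals $L_M(a)$ for a unique $a\in M$, and such a map is in addition left $M$-linear exactly when $(am)\cdot x=(ma)\cdot x$ for all $m\in M, x\in X$; since $L_M$ is a complete isometry, and hence injective, this forces $am=ma$ for all $m$, i.e.\ $a\in Z(M)$. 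So $L_M$ restricts to a bijection of $Z(M)$ onto $w^*CB_M(X)_N$. As $Z(M)$ is weak*-closed in $M$ (being the intersection of the kernels of the weak*-continuous maps $a\mapsto am-ma$) and $L_M$ is a completely isometric weak*-homeomorphism onto $w^*CB(X)_N$, this restriction is itself a completely isometric weak*-homeomorphism of $Z(M)$ onto $w^*CB_M(X)_N$.

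Next I would run the symmetric argument with $R_N$ in place of $L_M$. For any $b\in N$, right multiplication $R_N(b):x\mapsto x\cdot b$ is automatically left $M$-linear, and it is moreover right $N$-linear precisely when $b\in Z(N)$ (again using injectivity of the action). Thus $R_N$ restricts to a completely isometric weak*-homeomorphism of $Z(N)$ onto the same space $w^*CB_M(X)_N$. Composing, $\Psi:=(R_N|_{Z(N)})^{-1}\circ L_M|_{Z(M)}$ is a completely isometric weak*-homeomorphism $Z(M)\to Z(N)$, characterized by the relation $z\cdot x=x\cdot\Psi(z)$ for all $x\in X$. That $\Psi$ is a unital algebra isomorphism would then follow directly from this characterization: for $z_1,z_2\in Z(M)$ one has $x\cdot\Psi(z_1z_2)=(z_1z_2)\cdot x=z_1\cdot(x\cdot\Psi(z_2))=(x\cdot\Psi(z_1))\cdot\Psi(z_2)=x\cdot(\Psi(z_1)\Psi(z_2))$ for all $x$, so injectivity of the action gives $\Psi(z_1z_2)=\Psi(z_1)\Psi(z_2)$, and clearly $\Psi(1_M)=1_N$.

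The only point requiring care, which I would flag as the crux, is the faithfulness invoked twice above: that $c\cdot x=0$ for all $x\in X$ forces $c=0$ in $M$, and its analogue in $N$. Fortunately this is nothing but the injectivity built into the complete-isometry assertions of Theorem \ref{three} (for $L_M$ and $R_N$ respectively), so I would not need a separate argument for it. Everything else is routine bookkeeping with the left/right module conventions recorded before Theorem \ref{three}.
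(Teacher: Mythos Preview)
Your proof is correct and takes essentially the same approach as the paper: both use Theorem \ref{three} to identify each center via its action on an equivalence bimodule, the paper working with $Y$ and the pair $R_M,\,L_N$ (then appealing to symmetry for surjectivity), and you working with $X$ and the pair $L_M,\,R_N$. Your explicit identification of the common image as the two-sided bimodule endomorphisms $w^*CB_M(X)_N$ is a tidy repackaging that makes bijectivity and multiplicativity transparent, but the underlying argument is the same.
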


\begin{proof}
 By Theorem \ref{three} there is a $w^*$-continuous complete isometry
 $R_M : M \to w^*CB_N(Y)$.  The restriction of $R_M$ to
 $Z(M)$ maps into $w^*CB(Y)_M \cong N$, and so we have defined
 a $w^*$-continuous completely isometric homomorphism
 $\theta : Z(M) \to N$.  One easily
 sees that $\theta(a)(y) = y a$, for $a \in Z(M)$. It is also
 easy to see that this implies that $\theta$ maps into $Z(N)$,
  and to argue, by symmetry, that $\theta$ must be an isomorphism.
\end{proof}

\begin{lemma} \label{ball}  If $Z$ is a left dual operator $N$-module,
then the canonical map from $X \otimes_{hN} Z$ into $X
\otimes^{\sigma h}_{N} Z$ is completely isometric. In the case of
weak Morita equivalence, the canonical map $X' \otimes_{hB} Z \to X
\otimes^{\sigma h}_{N} Z$ is completely isometric, and it maps the
ball onto a $w^{*}$-dense set in {\rm Ball}$(X \otimes^{\sigma
h}_{N} Z)$.
\end{lemma}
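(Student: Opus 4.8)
The plan is to exploit the asymptotic factorization of $I_X$ described just after Corollary \ref{isws}. Using the net $(e_t)$ of \eqref{efa} and the relation $(x,y)x'=x[y,x']$, one has $e_t x=\sum_i x_i^t[y_i^t,x]$ for $x\in X$; so (after rescaling the $x_i^t,y_i^t$ so that each of $\lVert[x_i^t]\rVert$ and $\lVert[y_i^t]^T\rVert$ is $<1$) the maps $\Phi_t\colon X\to C_{m_t}(N)$, $\Phi_t(x)=([y_i^t,x])_i$, and $\Psi_t\colon C_{m_t}(N)\to X$, $\Psi_t([b_i])=\sum_i x_i^t b_i$, are completely contractive, weak*-continuous right $N$-module maps with $\Psi_t\circ\Phi_t=e_t\cdot\,$. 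First I would record the key identification $C_{m_t}(N)\otimes_{hN}Z\cong C_{m_t}(Z)\cong C_{m_t}(N)\otimes^{\sigma h}_N Z$: the left isomorphism is the $h$-analogue of Lemma \ref{C} amplified over $C_{m_t}$, the right one is Lemmas \ref{C} and \ref{A}. The crucial point is that the canonical map between these two is the identity of the single operator space $C_{m_t}(Z)$, so on this intermediate space the $h$- and $\sigma h$-norms literally coincide.

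Next, tensoring with $I_Z$ (Corollary \ref{F} on the $\sigma h$ side) and routing through $C_{m_t}(Z)$, I obtain a completely contractive map $\Theta_t:=(\Psi_t\otimes I)\circ(\Phi_t\otimes I)\colon X\otimes^{\sigma h}_N Z\to X\otimes_{hN}Z$ satisfying $\Theta_t\circ j=(e_t\cdot)\otimes I$, where $j$ denotes the canonical map. Reducing to finite tensors by Corollaries \ref{ballmod} and \ref{ballmatrix}, this yields the matricial estimate $\lVert((e_t\cdot)\otimes I)(w)\rVert_{h}\le\lVert j(w)\rVert_{\sigma h}$ for all $w$. This is the heart of the argument: the factorization forces everything through $C_{m_t}(Z)$, where the two norms agree.

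For the weak Morita case (the second assertion) this closes cleanly. Here $(e_t)$ may be taken to be a cai for $A$ with $x_i^t\in X'$, $y_i^t\in Y'$, so $[y_i^t,x]\in B$ and the same computation gives $\Theta_t(j(w))=((e_t\cdot)\otimes I)(w)$ inside $X'\otimes_{hB}Z$; since $e_t x\to x$ in norm for $x\in X'$, we get $((e_t\cdot)\otimes I)(w)\to w$ in the norm of $X'\otimes_{hB}Z$. Combined with the estimate, $\lVert w\rVert_h=\lim_t\lVert((e_t\cdot)\otimes I)(w)\rVert_h\le\lVert j(w)\rVert_{\sigma h}\le\lVert w\rVert_h$, so $X'\otimes_{hB}Z\to X\otimes^{\sigma h}_N Z$ is a complete isometry. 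The weak*-dense ball statement then drops out of the same map: for $\xi\in{\rm Ball}(X\otimes^{\sigma h}_N Z)$ one has $\Theta_t(\xi)\in{\rm Ball}(X'\otimes_{hB}Z)$ and $j(\Theta_t(\xi))=((e_t\cdot)\otimes I)(\xi)\to\xi$ weak* (since $e_t\to 1_M$ weak* and the actions are separately weak* continuous, as in Theorem \ref{neweq}).

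The main obstacle is the first assertion, for a general dual module $Z$ in the merely weak* Morita setting, where $e_t\to1_M$ only weak* and hence $((e_t\cdot)\otimes I)(w)\to w$ holds only weak*, not in norm; the clean limiting argument above is then unavailable. The genuine difficulty is to see that the separately weak*-continuous balanced forms (the predual of $X\otimes^{\sigma h}_N Z$) norm $X\otimes_{hN}Z$ — equivalently, that $j$ is injective and completely isometric. I expect this to be the delicate step, since a general completely bounded balanced form need not be weak*-continuous in the $Z$ variable and no regularization through $\Phi_t,\Psi_t$ repairs this. I would handle it by adapting the separation argument of \cite{BMP,EP}, combining the estimate of the second paragraph with weak* lower semicontinuity of the norm in the dual space $X\otimes^{\sigma h}_N Z$ and the injectivity forced by the Morita conditions; alternatively one may attempt to bootstrap from the weak Morita case using the weak*-density just established.
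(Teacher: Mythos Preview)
Your argument for the weak Morita case is essentially the paper's own: both use the asymptotic factorization of $e_t\cdot I_X$ through $C_{m_t}(N)$ (or $C_{m_t}(B)$) to obtain the key inequality $\lVert (e_t\cdot\,\otimes I)(u)\rVert_h\le\lVert j(u)\rVert_{\sigma h}$, and then pass to the limit using that $(e_t)$ is a cai for $A$ acting nondegenerately on $X'$. Your density argument via $\Theta_t$ is a legitimate alternative to the paper's; the paper instead approximates an arbitrary $u\in\mathrm{Ball}(X\otimes^{\sigma h}_N Z)$ first by elements $w\odot z$ with $w\in\mathrm{Ball}(R_n(X))$, $z\in\mathrm{Ball}(C_n(Z))$ (Corollary~\ref{ballmod}), then rewrites $e_t w\odot z$ explicitly as $x\odot v$ with $x\in\mathrm{Ball}(R_m(X'))$ and $v\in\mathrm{Ball}(C_m(Z))$. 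Your route is slicker; the paper's is more hands-on but equivalent.

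On the first assertion (the weak* Morita case for $X\otimes_{hN}Z$): you are right that the limiting step does not close in the same way, since $e_t\to 1_M$ only weak* and hence $(e_t\cdot\,\otimes I)(u)\to u$ need not hold in the Haagerup norm. The paper does not spell this out either: it proves only the weak Morita case and declares the other ``similar''. So your diagnosis of the obstacle is accurate, but your proposed remedies (a separation argument, or bootstrapping from the weak Morita density) remain sketches rather than a proof. Note that the factorization does give $(\Phi_t\otimes I)(u)=(\Phi_t\otimes I)(j(u))$ in $C_{m_t}(Z)$, so one at least gets injectivity of $j$ and the estimate $\lVert e_t\cdot u\rVert_h\le\lVert j(u)\rVert_{\sigma h}$ for every $t$; the missing ingredient is precisely an argument that $\sup_t\lVert e_t\cdot u\rVert_h\ge\lVert u\rVert_h$, and neither you nor the paper supplies it explicitly.
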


\begin{proof}  We just treat the `weak Morita' case, the other being
similar.  The canonical map here is completely contractive, let us
call it $\theta$. On the other hand, let $\varphi_s, \psi_s$ be as
defined just below Corollary  \ref{isws}, with $\psi_s(\varphi_s(y)) = f_s y \to y$.  Then
for $u \in M_n(X' \otimes_B Z)$, we have
$$\Vert \theta_n(u) \Vert \geq \Vert (\varphi_s \otimes
I)_n(\theta_n(u)) \Vert = \Vert (\varphi_s \otimes I)_n(u) \Vert
\geq \Vert f_s u \Vert .$$ Taking a limit over $s$, gives $\Vert
\theta_n(u) \Vert \geq \Vert u \Vert$.

Let $u$ $\in {\rm Ball}(X \otimes^{\sigma h}_{N} Z)$.  By Lemma
\ref{ballmod}, there exists a net $(u_{t})$ in the image of {\rm
Ball}$(X \otimes_{h N} Z)$ such that $u_{t}$ $ \buildrel w^* \over
\to$ $u$.
 We may assume that each $u_{t}$ is of the form
 $w \odot z$, for $w \in {\rm Ball}(R_n(X)), z \in {\rm Ball}(C_n(Z))$.
Let $(e_t)$ be as in (\ref{efa}), that is, with each $e_t$ of the
form $(x,y)$ (in
 suggestive notation), for $x \in
{\rm Ball}(R_m(X'))$ and $y \in {\rm Ball}(C_m(Y'))$.    However, $w
\odot z$ is the weak* limit of terms $e_t w \odot z$, and $e_t w
\odot z = x \odot v$, where $v$ is a column with $k$th entry $\sum_j
[y_k, w_j] z_j$.  It is easy to check that $\Vert v \Vert \leq 1$.
\end{proof}

\begin{proposition} \label{equiv}
Weak* Morita equivalence is an equivalence relation.
\end{proposition}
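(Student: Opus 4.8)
The plan is to verify reflexivity, symmetry, and transitivity in turn; the first two are formal, and transitivity carries the real content. For reflexivity, I would take $X = Y = M$, viewed as an $M$-$M$-bimodule by multiplication. Lemma \ref{C} gives $M \otimes^{\sigma h}_{M} M \cong M$ completely isometrically and $w^*$-homeomorphically, and the map there is evidently an $M$-bimodule isomorphism; so $(M,M,M,M)$ witnesses that $M$ is weak* Morita equivalent to itself. Symmetry is immediate from Definition \ref{w*Mor}, which is manifestly symmetric under interchanging $(M,X)$ with $(N,Y)$: if $(M,N,X,Y)$ is a weak* Morita context then so is $(N,M,Y,X)$.

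For transitivity, suppose $(M,N,X,Y)$ and $(N,P,V,W)$ are weak* Morita contexts, so that $V$ is an $N$-$P$-bimodule and $W$ a $P$-$N$-bimodule with $N \cong V \otimes^{\sigma h}_{P} W$ and $P \cong W \otimes^{\sigma h}_{N} V$ as dual operator bimodules. The correct candidate equivalence bimodules are $\tilde{X} = X \otimes^{\sigma h}_{N} V$ and $\tilde{Y} = W \otimes^{\sigma h}_{N} Y$, which are respectively an $M$-$P$-bimodule and a $P$-$M$-bimodule by Lemma \ref{D}. I would then run the chain
$$\tilde{X} \otimes^{\sigma h}_{P} \tilde{Y} \cong X \otimes^{\sigma h}_{N} (V \otimes^{\sigma h}_{P} W) \otimes^{\sigma h}_{N} Y \cong X \otimes^{\sigma h}_{N} N \otimes^{\sigma h}_{N} Y \cong X \otimes^{\sigma h}_{N} Y \cong M,$$
where the first isomorphism regroups via the associativity of Proposition \ref{asso2}, the second applies the functoriality of Corollary \ref{F} to tensor the $N$-bimodule isomorphism $N \cong V \otimes^{\sigma h}_{P} W$ on the left by $\mathrm{id}_X$ and on the right by $\mathrm{id}_Y$, the third uses Lemma \ref{C}, and the last is the hypothesis $M \cong X \otimes^{\sigma h}_{N} Y$. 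A symmetric computation, now invoking $N \cong Y \otimes^{\sigma h}_{M} X$ and $P \cong W \otimes^{\sigma h}_{N} V$, gives $\tilde{Y} \otimes^{\sigma h}_{M} \tilde{X} \cong P$, so that $(M,P,\tilde{X},\tilde{Y})$ is a weak* Morita context.

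The step I expect to demand the most care is not the abstract chain itself but the bookkeeping that every link in it is simultaneously a complete isometry, a $w^*$-homeomorphism, and a bimodule map for the outer $M$- and $P$-actions, and that the composite is therefore a genuine dual operator $M$-bimodule isomorphism. The associativity isomorphism of Proposition \ref{asso2} and the map of Lemma \ref{C} already carry the metric and weak* data; the point is to check that they intertwine the outer actions, and that Corollary \ref{F} transports the given $N$-bimodule isomorphism without disturbing those actions. All of this is routine once the candidate bimodules $\tilde{X}$ and $\tilde{Y}$ have been identified, so the essential insight is simply that one should tensor together the intermediate equivalence bimodules.
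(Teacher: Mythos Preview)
Your proposal is correct and follows essentially the same approach as the paper, which simply remarks that the result ``follows the usual lines'' with transitivity coming from associativity of the tensor products (Proposition \ref{asso2}) and Lemma \ref{C}. You have written out explicitly what the paper leaves implicit, including the candidate bimodules $\tilde X = X \otimes^{\sigma h}_N V$ and $\tilde Y = W \otimes^{\sigma h}_N Y$ and the chain of isomorphisms, and your attention to the bimodule and weak* bookkeeping is appropriate.
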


\begin{proof}  This follows the usual lines, for example the transitivity
follows from associativity of the tensor products and Lemma \ref{C}.
\end{proof}

{\bf Remark.}  Concerning transitivity of weak Morita equivalence,
it is convenient to consider Definition \ref{basicdef} as defining
an equivalence between pairs $(M,A)$ and $(N,B)$, as opposed to just
between $M$ and $N$. That is we also consider the
 weak$^{*}$-dense operator subalgebras.  Then it is fairly routine to
 see that weak Morita equivalence
 is an equivalence relation \cite{UK}.

\begin{theorem} \label{hmod}
Weak* Morita equivalent dual operator algebras have equivalent
 categories of normal Hilbert modules.  Moreover, the equivalence
preserves the subcategory of modules corresponding to completely
isometric normal representations.
\end{theorem}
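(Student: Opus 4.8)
The plan is to re-use the functors $\mathcal{F} = X \otimes^{\sigma h}_{N} (-)$ and $\mathcal{G} = Y \otimes^{\sigma h}_{M} (-)$ from the proof of Theorem \ref{omod}, and to show that they restrict to mutually inverse equivalences between the Hilbert module subcategories $_{N}\mathcal{H}$ and $_{M}\mathcal{H}$. On objects, given $(H,\rho) \in \, _{N}\mathcal{H}$, the space $\mathcal{F}(H) = X \otimes^{\sigma h}_{N} H$ is by Lemma \ref{D} a left dual operator $M$-module, and the action $m \cdot (x \otimes \xi) = mx \otimes \xi$ gives a $w^{*}$-continuous unital completely contractive representation $\sigma$ of $M$. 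On morphisms I would use Corollary \ref{F}: a bounded intertwiner $T$ is in particular a $w^{*}$-continuous completely bounded $N$-module map, so $\mathcal{F}(T) = I_X \otimes T$ is a morphism of $_{M}\mathcal{H}$; as in Theorem \ref{omod} this is completely contractive, and using the natural isomorphism $\mathcal{G}\mathcal{F} \cong \mathrm{Id}$ one sees it is in fact completely isometric on morphism spaces.

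The real content is the object-level claim that $\mathcal{F}(H)$ is again a (column) Hilbert space, so that $(\mathcal{F}(H),\sigma) \in \, _{M}\mathcal{H}$, and symmetrically for $\mathcal{G}$. This is the weak* analogue of the familiar fact that the module Haagerup tensor product of an operator module with a Hilbert module is again a Hilbert module (as in \cite{BMP}): realizing $X$ and $N$ concretely on Hilbert spaces and letting $H$ act through its normal representation, the elementary tensors $x \otimes \xi$ may be identified with genuine vectors, and Corollary \ref{ballmod} together with Lemma \ref{ball} should let one transport the resulting Hilbert space norm from $X \otimes_{hN} H$ to its $w^{*}$-closure $X \otimes^{\sigma h}_{N} H$. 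Alternatively, and perhaps more cleanly, I would reduce to a single representation: every object of $_{N}\mathcal{H}$ is unitarily a reducing subspace of a multiple of a normal universal representation; since $\mathcal{F}$ is compatible with $\ell^2$-direct sums (by the cardinal form of Lemma \ref{A}) and with the adjoints of intertwiners, it carries orthogonal decompositions to orthogonal decompositions, so it suffices to verify the claim for one convenient representation.

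The natural isomorphisms $\mathcal{G}\mathcal{F}(H) \cong H$ and $\mathcal{F}\mathcal{G}(K) \cong K$ are exactly those of Theorem \ref{omod}, arising from Proposition \ref{asso2} and Lemma \ref{C}; since a surjective complete isometry between column Hilbert spaces is a unitary, these are unitary intertwiners, so $\mathcal{F}$ and $\mathcal{G}$ are inverse equivalences of $_{N}\mathcal{H}$ and $_{M}\mathcal{H}$. For the final assertion I would characterize the completely isometric normal representations intrinsically, via Lemma \ref{hi}, as those objects whose module is quasi-equivalent to (contains a copy of) the normal universal representation, and then show this property is preserved. The key point is that $\mathcal{F}$ sends the normal universal $N$-module to a module quasi-equivalent to the normal universal $M$-module: the universal object is characterized purely categorically inside $_{N}\mathcal{H}$ as the object of which every object is a reducing subspace of a multiple, and $\mathcal{F}$, being an equivalence compatible with multiples and with reducing subspaces, must preserve this characterization.

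I expect the main obstacle to be the object-level claim that $X \otimes^{\sigma h}_{N} H$ is genuinely a Hilbert space, together with the compatibility of $\mathcal{F}$ with the adjoints of intertwiners, which is needed both for the reducing-subspace reductions and for the last assertion. Producing an honest inner product on $X \otimes^{\sigma h}_{N} H$, rather than merely an operator space structure, is where the weak* topology and the nonselfadjointness make the passage from the norm-closed theory of \cite{BMP} least automatic, and is the step I would check most carefully.
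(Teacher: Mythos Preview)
Your outline correctly isolates the crux of the argument---showing that $Y \otimes^{\sigma h}_{M} H^{c}$ (or $X \otimes^{\sigma h}_{N} H^{c}$) is a column Hilbert space---but neither of your two suggested routes actually closes this gap, and the paper supplies a device you do not mention.  Your route (a), transporting a Hilbert norm from $X \otimes_{hN} H$ via Lemma \ref{ball} and Corollary \ref{ballmod}, presupposes that $X \otimes_{hN} H$ is already known to be a Hilbert space; the \cite{BMP} argument for that uses a strong Morita context with norm-convergent approximate identities, which is exactly what is unavailable in the weak* setting.  Realizing $X$ concretely in some $B(K,H)$ compatibly with the pairings is what Section 4 does, and that construction \emph{uses} the present theorem, so invoking it here would be circular.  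Your route (b) is only a reduction: after passing to a single representation you still owe the verification.  The paper's actual mechanism is an \emph{ultraproduct embedding}.  Using the asymptotic factorizations $\varphi_{s} : K \to C_{n_{s}}(H^{c})$ coming from the nets (3.1) (tensored with $I_{H}$ via Corollary \ref{F}), one defines $T : K \to \prod_{\mathcal{U}} C_{n_{s}}(H^{c})$ by $T(z) = (\varphi_{s}(z))_{s}$; the ultraproduct of column Hilbert spaces is a column Hilbert space, and the weak* convergence $\psi_{s}\varphi_{s}(z) = f_{s} z \to z$ is precisely what forces $T$ to be a complete isometry.  This is the missing idea, and it is what makes the passage from \cite{BMP} to the weak* setting go through.

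For the final assertion your proposed categorical characterization is not correct.  Lemma \ref{hi} characterizes \emph{normal universal} representations, not completely isometric ones: a completely isometric normal representation of $M$ need not extend to a faithful representation of $W^{*}_{\rm max}(M)$, so ``completely isometric'' is not the same as ``quasi-equivalent to the normal universal module.''  The paper proceeds by a direct estimate instead: assuming $M \subset B(H)$ completely isometrically and writing $\rho$ for the induced representation of $N$ on $K$, one bounds $\Vert [\rho(b_{pq})] \Vert$ from below by applying $[\rho(b_{pq})]$ to tensors $[y_{kl} \otimes \zeta_{rs}]$, then hitting the result with $[x_{ij}]$, and invoking Theorem \ref{three} twice (once to pass from $\Vert [(x_{ij}, b_{pq} y_{kl})] \Vert$ to $\Vert [b_{pq} y_{kl}] \Vert$, and once more to recover $\Vert [b_{pq}] \Vert$).
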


\begin{proof}  If $H$ is a normal Hilbert $M$-module, 
let $K = Y \otimes^{\sigma h}_{M} H^{c}$.
By the discussion just below Corollary  \ref{isws},
combined with Corollary \ref{F}, there are nets of maps $\varphi_s :
K \to C_{n_s}(M) \otimes^{\sigma h}_M \, H^c \cong C_{n_s}(H^c)$,
and maps $\psi_s : C_{n_s}(H^c) \to K$, with $\psi_s (\varphi_s(z))
= f_s z \to z$ weak* for all $z \in K$. Here $(f_s)$ is as in (3.1).
Let $\Lambda$ be the directed set indexing $s$, and let ${\mathcal
U}$ be an ultrafilter with the property that $\lim_{{\mathcal U}} \,
z_s = \lim_{\Lambda} \, z_s$ for scalars $z_s$, whenever the latter
limit exists.  Let $H_{\mathcal U}$ be the ultraproduct of the
spaces $C_{n_s}(H^c)$, which is a column Hilbert space, as is well
known and easy to see.  Define $T : K \to H_{\mathcal U}$ by $T(x) =
(\varphi_s(x))_s$, for $x \in K$.  This is a complete
contraction. To see that it is an isometry, note that for any $x \in
K, \rho \in {\rm Ball}(X_*)$, we have
$$|\rho(x)| = \lim_{{\mathcal U}} \, |\rho(\psi_s(\varphi_s(x)))| \leq
\lim_{{\mathcal U}} \, \Vert \varphi_s(x) \Vert =  \Vert T(x) \Vert
.$$ Similarly, $T$ is a complete isometry, as we leave to the reader
to check.

By Lemma \ref{ball} and Corollary \ref{ballmod}, $Y  \otimes_{hM}
H^c = Y  \otimes^{\sigma h}_{M} H^{c}$.  Note that since $-
\otimes_{h} H^{c} = - \overset{\frown}{\otimes} H^c$ (see e.g.\
\cite[Proposition 9.3.2]{ER1}), we may replace $\otimes_{hM}$ here
by $\overset{\frown}{\otimes}_M$ (see 3.4.2 of \cite{BLM} for this
notation).

That $K = Y \otimes^{\sigma h}_{M} H^{c}$ is a normal Hilbert
$N$-module follows from Theorem \ref{omod}. Finally, suppose that
$M$ is a weak* closed subalgebra of $B(H)$, we will show that the
induced representation $\rho$ of $N$ on $K$ is completely isometric.
 Certainly this map is completely contractive. If $[b_{pq}] \in M_{d}(N)$,
$[y_{kl}] \in {\rm Ball}(M_m(Y)), [\zeta_{rs}] \in {\rm
Ball}(M_g(H^c)), [x_{ij}] \in {\rm Ball}(M_n(X))$, then
$$\Vert [\rho(b_{pq})] \Vert \geq \Vert [b_{pq}y_{kl} \otimes \zeta_{rs}] \Vert
\geq \Vert [ (x_{ij},b_{pq} y_{kl}) \zeta_{rs} ] \Vert .$$  Taking
the supremum over all such $[\zeta_{rs}]$, gives
$$\Vert [\rho(b_{pq})] \Vert \geq \sup \{ \Vert [(x_{ij},b_{pq} y_{kl})] \Vert :
[x_{ij}] \in {\rm Ball}(M_n(X)) \} = \Vert [b_{pq} y_{kl}] \Vert,
$$
by Theorem \ref{three}.  Taking the supremum over all such
$[y_{kl}] \in {\rm Ball}(M_m(Y))$
gives $\Vert [\rho(b_{pq})] \Vert \geq \Vert [b_{pq}] \Vert$,
by Theorem \ref{three}
again.   \end{proof}

The last result shows that weak* Morita equivalent
operator algebras have equivalent categories of normal
representations.  It would be very interesting to characterize when two
operator algebras have equivalent categories of normal
representations; it seems quite possible that this happens iff 
they are weak* Morita equivalent.  

\begin{corollary} \label{mazur}  If $H \in \, _M{\mathcal H}$ then
 $Y \otimes^{\sigma h}_{M} H^{c} =
 Y \otimes_{h M} H^{c} = Y \overset{\frown}{\otimes}_{M} H^{c}$
 completely isometrically.
These are column Hilbert spaces.  Here $\overset{\frown}{\otimes}_{M}$ 
is as in {\rm 3.4.2} of {\rm \cite{BLM}}. In the case of weak Morita
equivalence, these also equal $Y' \otimes_{h A} H^{c} = Y'
 \overset{\frown}{\otimes}_{A} H^{c}$.
 \end{corollary}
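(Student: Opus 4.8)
The plan is to assemble the statement from facts already obtained inside the proof of Theorem \ref{hmod}, adding only a short surjectivity argument. First I would invoke the symmetric version of Lemma \ref{ball} (interchanging the roles of $M,N$ and $X,Y$): since $H^c$ is a left dual operator $M$-module, the canonical map $\iota : Y \otimes_{hM} H^c \to Y \otimes^{\sigma h}_{M} H^c$ is a complete isometry. Thus the only content of the first displayed equality is that $\iota$ is onto, and everything else reduces to identifying the various tensor products.

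For surjectivity I would use that $K := Y \otimes^{\sigma h}_{M} H^c$ is a column Hilbert space. This is furnished by the ultraproduct embedding $T : K \to H_{\mathcal U}$ constructed in the proof of Theorem \ref{hmod}: $K$ sits completely isometrically inside a column Hilbert space, and a closed subspace of a column Hilbert space is again one, which simultaneously settles the ``these are column Hilbert spaces'' clause. Now the range $\iota(Y \otimes_{hM} H^c)$ is norm-complete, hence norm-closed in $K$; since $K$ is a Hilbert space, Mazur's theorem (closed convex sets are weakly closed) makes this range weak* closed, and therefore its intersection with ${\rm Ball}(K)$ is weak* closed as well, being the intersection of a weak* closed set with the weak* compact set ${\rm Ball}(K)$. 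Because $\iota$ is isometric, this intersection is exactly $\iota({\rm Ball}(Y \otimes_{hM} H^c))$, which by Corollary \ref{ballmod} is $w^*$-dense in ${\rm Ball}(K)$. Comparing, the two balls coincide, so $\iota$ is onto and $Y \otimes_{hM} H^c = Y \otimes^{\sigma h}_{M} H^c$.

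Next I would pin down $\overset{\frown}{\otimes}_M$. Since $- \otimes_h H^c = - \overset{\frown}{\otimes} H^c$ for a column Hilbert space (e.g.\ \cite[Proposition 9.3.2]{ER1}), passing to the quotient by the same $M$-balancing relations gives $Y \otimes_{hM} H^c = Y \overset{\frown}{\otimes}_M H^c$, which closes the chain of equalities in the first display.

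Finally, for the weak Morita case I would repeat the argument word for word, but starting from the second assertion of Lemma \ref{ball}: the canonical map $Y' \otimes_{hA} H^c \to Y \otimes^{\sigma h}_{M} H^c$ is a complete isometry whose ball has $w^*$-dense image, so the same reflexivity-plus-closed-ball reasoning forces it to be onto, while $Y' \otimes_{hA} H^c = Y' \overset{\frown}{\otimes}_A H^c$ follows again from the identity $- \otimes_h H^c = - \overset{\frown}{\otimes} H^c$. The only real (and mild) obstacle is the surjectivity step; it hinges entirely on knowing $K$ is a Hilbert space, so that Mazur's theorem upgrades norm-closedness to weak* closedness. This is precisely why the column-Hilbert-space conclusion borrowed from Theorem \ref{hmod} is the indispensable ingredient.
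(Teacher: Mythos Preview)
Your proposal is correct and follows essentially the same route as the paper. The paper's proof simply says ``We saw the first part in the last proof,'' referring to the passage in the proof of Theorem \ref{hmod} where it is noted that ``By Lemma \ref{ball} and Corollary \ref{ballmod}, $Y \otimes_{hM} H^c = Y \otimes^{\sigma h}_{M} H^c$'' and that $-\otimes_h H^c = -\overset{\frown}{\otimes} H^c$; you have unpacked exactly this, making explicit the Mazur-type step (norm-closed subspace of a Hilbert space is weakly, hence weak*, closed) that the paper leaves tacit but alludes to via the label \texttt{mazur}. The one small item you did not mention is the paper's brief observation that $A$ acts nondegenerately on $H$ (via $\langle \eta,\eta\rangle = \lim_t \langle e_t\eta,\eta\rangle = 0$ for $\eta \in H \ominus [AH]$), included so that $H^c$ is a genuine operator $A$-module in the sense required for $\overset{\frown}{\otimes}_A$ as in \cite[3.4.2]{BLM}; this is a technical hygiene point rather than a gap in your argument.
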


\begin{proof}  We saw the first part in the last proof.  The assertions
involving $Y'$ follow in a similar way, by Lemma \ref{ball}.  Note
that in this case, if $\eta \in H \ominus [A H]$ then $$\langle \eta
, \eta \rangle =
 \lim_t \, \langle e_t \eta , \eta \rangle = 0 .$$
Thus $A$ acts nondegenerately on $H$.
\end{proof}

\section{The weak linking algebra}

In this section again, $(M,N,X,Y,( \cdot, \cdot ), \lbrack \cdot,\cdot
\rbrack)$ is a weak*
 Morita context.
Suppose that $M$ is represented as a weak$^{*}$-closed nondegenerate
subalgebra of $B(H)$, for a Hilbert space $H$.
 Then by Corollary \ref{mazur}, $K= Y
\otimes^{\sigma h}_{M} H^{c}$ is a column Hilbert space. Define a
right $M$-module map $\Phi : Y \to B(H,K)$ by $\Phi(y)(\zeta) = y
\otimes_{M} \zeta$ where $y \in Y$ and $\zeta \in H$. It is easy to
see that $\Phi$ is a completely contractive $N$-$M$-bimodule map. It
is weak$^{*}$-continuous, since if we have a bounded net $y_t \to y$
 weak$^{*}$ in $Y$, and if $\zeta \in H$, then
$y_t \otimes_{M} \zeta \to y \otimes_{M} \zeta$ weakly by \cite{EP}.
That is, $\Phi(y_t) \to \Phi(y)$ in the WOT, and it follows that
$\Phi$ is weak$^{*}$-continuous.  If $\Vert \Phi(y) \Vert \leq 1$,
and if $\zeta \in {\rm Ball}(H^{(n)})$, and $[x_{ij}] \in {\rm Ball}(M_n(X))$,
then
$$\Vert [(x_{ij},y)] \zeta \Vert  =  \Vert [x_{ij} \otimes \Phi(y)]
\zeta \Vert \Vert \leq \Vert \Phi(y) \Vert.$$ Taking the supremum
over such $\zeta$, and then over such $[x_{ij}]$, we obtain from
Theorem \ref{three} that $\Vert y \Vert \leq 1$.  Thus $\Phi$ is an
isometry, and a similar but more tedious argument shows that $\Phi$
is a complete isometry.  By the Krein-Smulian theorem we deduce that
the range of $\Phi$ is weak$^{*}$-closed.  A similar
argument, which we leave to the reader, shows that the map $\Psi : X
\to B(K,H)$, defined by $\Psi(x)(y \otimes \zeta) = (x,y) \zeta$, is
a $w^*$-continuous completely isometric $M$-$N$-bimodule map. As we
said in Theorem \ref{hmod}, the induced normal representation $N \to
B(K)$ is completely isometric.

We use the above to define the direct sum $M \oplus^{c} Y$ as
follows.   For specificity, the reader might want to
 take $H$ to be a universal normal
representation of $M$, that is the restriction to $M$ of a
one-to-one normal representation of $W^*_{\rm max}(M)$. Define a map
$\theta : M \oplus^{c} Y \to B(H, K \oplus H)$ by $\theta
((m,y))(\zeta) = (m \zeta , y \otimes_{M} \zeta)$,
 for $y \in Y, m \in M, \zeta \in H$.
One can quickly check that $\theta$ is a one-to-one, $M$-module map,
and that $\theta$ is a weak$^{*}$-continuous  complete isometry when
restricted to each of $Y$ and $M$.  Also, $W$= Ran$(\theta)$ is
easily seen to be weak$^{*}$-closed. We norm $M \oplus^{c} Y$  by
pulling back the operator space structure
 from $W$ via $\theta$.
Thus $M \oplus^{c} Y$ may be identified with the weak$^{*}$-closed
right $M$-submodule $W$ of $B(H, H \oplus K)$; and hence it is a
dual operator $M$-module.  In a similar way, we define $M \oplus^r
X$ to be the canonical weak$^{*}$-closed left $M$-submodule of $B(H \oplus K,H)$.

We next define the `weak linking algebra' of the context, namely
  $$\mathcal{L}^w =
\Big\lbrace \left[
\begin{array}{ccl}
a & x \\
y & b
\end{array}
\right] : \ a \in M, b\in N, x\in X, y\in Y \Big\rbrace,  $$ with
the obvious multiplication.   As in \cite[Lemma 5.6]{BMP}, one
easily sees that there is at most one possible sensible dual
operator space structure on this linking algebra.
 Indeed if $\Lambda$ is the set indexing $t$ in
the net in (\ref{efa}), and if $\beta, t \in \Gamma$, then define
$\theta^{\beta,t}$ on the linear space $\mathcal{L}^w$ to be the map
$\theta^{\beta}$ in  \cite[p.\ 45]{BMP}, but with all the
$y_i^\beta$ replaced by $y_i^t$.  Then a simple modification of the
argument in  \cite[p.\ 50-51]{BMP}, and using semicontinuity of the
norm in the weak* topology, yields that
any `sensible' norm assigned to
$\mathcal{L}^w$ must agree with $\sup_{\beta, t} \, \Vert
\theta^{\beta,t}(\cdot) \Vert$

That such a dual operator space structure does exist, one only need
 view $\mathcal{L}^{w}$ as a subalgebra
${\mathcal R}$ of $B(H \oplus K)$, using the obvious pairings $X
\times K \to H$ (induced by $(\cdot,\cdot)$), $Y \times H \to K$,
and $N \times K \to K$ (this is the induced representation of $N$ on
$K$ from Theorem \ref{hmod}). It is easy to check that $(M,{\mathcal
R},M \oplus^r X,M \oplus^{c} Y)$ is also a  weak* Morita context
(this follows from norm equalities of the kind in e.g.\ 
the centered equations
in \cite[Theorem 5.12]{BMP}). This all may be most easily visualized by
picturing both contexts as $3 \times 3$-matrices, namely as
subalgebras of $B(H \oplus H \oplus K)$. Theorem \ref{three} gives
${\mathcal R} \cong w^{*} CB(M \oplus^{c} Y)_{M}$ completely
isometrically and $w^*$-homeomorphically.

 Note that in a weak Morita situation, the linking operator
algebra of the strong Morita context $(A,B,X',Y')$ can be identified
completely isometrically as the obvious weak* dense subalgebra
${\mathcal L}$ of ${\mathcal R}$ (see e.g.\ \cite[Proposition
6.10]{DB2}). Incidentally, at this point we have already proved the
assertion made at the start of Example (1) in Section 3, and indeed
that every weak Morita equivalence arises as the weak* closure of a
strong Morita equivalence, or can be viewed as the weak*-closure, in
some representation, of the linking operator algebra of a strong
Morita equivalence. We
 have a strong Morita context $(A,{\mathcal L},A
\oplus^r X',A \oplus^c Y')$ (see \cite{BMP,BMN}), which can be
viewed as a subcontext of $(M,{\mathcal R},M \oplus^r X,M \oplus^{c}
Y)$.  Thus the latter  is a weak Morita context.

Extracting from the last paragraphs, we have:

 \begin{corollary} \label{ela}  $M$ is weak* Morita equivalent to
the weak linking algebra ${\mathcal L}^w$.  Indeed this is a weak
Morita equivalence if $(M,N,X,Y)$ is a weak Morita context.
\end{corollary}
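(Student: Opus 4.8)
The plan is to extract both assertions directly from the detailed construction of the weak linking algebra carried out in the preceding paragraphs. The essential observation is that the weak linking algebra $\mathcal{L}^w$ has already been identified, as a dual operator algebra, with the concrete subalgebra $\mathcal{R}$ of $B(H \oplus K)$; thus any Morita-theoretic statement about $\mathcal{R}$ transfers verbatim to $\mathcal{L}^w$, and there is essentially nothing new to compute.

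For the first assertion, I would simply invoke the fact, established above, that $(M, \mathcal{R}, M \oplus^r X, M \oplus^c Y)$ is a weak* Morita context. Since $\mathcal{L}^w = \mathcal{R}$ as dual operator algebras, this immediately yields that $M$ is weak* Morita equivalent to $\mathcal{L}^w$, with equivalence bimodules $M \oplus^r X$ and $M \oplus^c Y$ and the evident induced pairings.

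For the second assertion, in the weak Morita case I would pass to the strong Morita context $(A, \mathcal{L}, A \oplus^r X', A \oplus^c Y')$, where $\mathcal{L}$ is the linking operator algebra of the strong Morita subcontext $(A,B,X',Y')$. As noted above, $\mathcal{L}$ embeds completely isometrically as the obvious weak*-dense subalgebra of $\mathcal{R}$, and the sub-bimodules $A \oplus^r X'$ and $A \oplus^c Y'$ are weak*-dense in $M \oplus^r X$ and $M \oplus^c Y$ respectively. Confirming that these density and algebraic-compatibility conditions fit the requirements of Definition \ref{basicdef} then exhibits the weak* Morita context $(M, \mathcal{R}, M \oplus^r X, M \oplus^c Y)$ as a weak Morita context, as desired.

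The only point requiring any care is the verification that $(A, \mathcal{L}, A \oplus^r X', A \oplus^c Y')$ is genuinely a weak*-dense subcontext of $(M, \mathcal{R}, M \oplus^r X, M \oplus^c Y)$ — in particular that $A \oplus^r X'$ is weak*-dense in $M \oplus^r X$, which follows from the weak*-density of $A$ in $M$ and of $X'$ in $X$ together with the definition of the direct sums, and that $\mathcal{L}$ is weak*-dense in $\mathcal{R}$, which is precisely the content of the identification of $\mathcal{L}$ as the stated subalgebra. Everything else is a direct quotation of results already proved, so I expect no genuine obstacle here.
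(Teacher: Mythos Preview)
Your proposal is correct and follows essentially the same approach as the paper: the corollary is stated there as a direct extraction from the preceding paragraphs, which establish that $(M,\mathcal{R},M\oplus^r X,M\oplus^c Y)$ is a weak* Morita context and, in the weak Morita case, that the strong Morita context $(A,\mathcal{L},A\oplus^r X',A\oplus^c Y')$ sits inside it as a weak*-dense subcontext. Your write-up is slightly more explicit about the density verifications than the paper, but the argument is the same.
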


It is often useful here to know that:

\begin{proposition} \label{assoc}  With notation as in Theorem
{\rm \ref{hmod}}, we have $(M \oplus^c Y) \otimes^{\sigma h}_M H^c
\cong (H \oplus K)^c$ as Hilbert spaces.
\end{proposition}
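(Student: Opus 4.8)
The plan is to reduce the statement to the distributivity of the functor $- \otimes^{\sigma h}_M H^c$ over the column direct sum $\oplus^c$, and then to evaluate the two resulting summands using results already in hand. Concretely, I would first establish that for dual operator right $M$-modules $V$ and $W$ there is a completely isometric, $w^*$-homeomorphic identification
\[
(V \oplus^c W) \otimes^{\sigma h}_M H^c \; \cong \; (V \otimes^{\sigma h}_M H^c) \, \oplus^c \, (W \otimes^{\sigma h}_M H^c).
\]
Granting this with $V = M$ and $W = Y$, Lemma \ref{C} gives $M \otimes^{\sigma h}_M H^c \cong H^c$, while by construction (see Theorem \ref{hmod}) $Y \otimes^{\sigma h}_M H^c = K$. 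Since the column direct sum of two column Hilbert spaces is just the column Hilbert space of their Hilbert-space direct sum, $H^c \oplus^c K = (H \oplus K)^c$, and the Proposition follows.

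For the distributivity isomorphism I would argue exactly along the lines of the proof of Lemma \ref{A}, in two stages mirroring the passage from $\otimes^{\sigma h}$ to $\otimes^{\sigma h}_M$. First, for the non-module tensor product, I pass to preduals: the predual of a column direct sum is the row direct sum of preduals, $(V \oplus^c W)_* \cong V_* \oplus^r W_*$, and the extended Haagerup tensor product is functorial and distributes over such row/column direct sums (see \cite{ER2}). Using $X \otimes^{\sigma h} Y = (X_* \otimes_{eh} Y_*)^*$ and dualizing then yields $(V \oplus^c W) \otimes^{\sigma h} H^c \cong (V \otimes^{\sigma h} H^c) \oplus^c (W \otimes^{\sigma h} H^c)$. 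Second, one passes to the module version by quotienting out the $w^*$-closed span of the balancing terms $vm \otimes \zeta - v \otimes m\zeta$; since these relations act within each summand, this balancing subspace respects the direct-sum decomposition (just as $\theta(N) = M_{mn}(N')$ in Lemma \ref{A}), so the quotient is again a column direct sum. Alternatively, one may build the isomorphism directly from the completely contractive, $w^*$-continuous, right $M$-module inclusion and projection maps associated with $\oplus^c$, pushed through the tensor product by Corollary \ref{F}, and then check the assembled map respects the column matrix norms.

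The main obstacle is precisely the verification that the distributivity isomorphism is genuinely \emph{completely} isometric and a $w^*$-homeomorphism, and that the two summands combine into the column structure of $(H \oplus K)^c$ rather than some other operator-space direct sum. Here one must track carefully how the column operator space structure on $M \oplus^c Y$, pulled back in Section 4 from its concrete realization inside $B(H, H \oplus K)$, interacts with the balancing relations defining $\otimes^{\sigma h}_M$, and confirm that the inner product on the resulting Hilbert space is the direct-sum inner product. Once the column norms are matched on the $w^*$-dense image of $(M \oplus^c Y) \otimes_{hM} H^c$ (using Corollary \ref{ballmod} to pass to the weak* closure), the identification $(M \oplus^c Y) \otimes^{\sigma h}_M H^c \cong (H \oplus K)^c$ is complete.
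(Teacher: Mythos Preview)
Your proposal is correct, but the route differs from the paper's. The paper does \emph{not} establish a general distributivity lemma for $\otimes^{\sigma h}_M$ over $\oplus^c$. Instead it first invokes Corollary~\ref{ela}, which says that $(M,\mathcal{L}^w, M \oplus^r X, M \oplus^c Y)$ is itself a weak* Morita context; then Theorem~\ref{hmod} applies with $M \oplus^c Y$ playing the role of the equivalence bimodule, and this gives \emph{immediately} that $L = (M \oplus^c Y) \otimes^{\sigma h}_M H^c$ is a column Hilbert space. Only after that does the paper push the two projections $M \oplus^c Y \to M$ and $M \oplus^c Y \to Y$ through the tensor product via Corollary~\ref{F}, obtaining idempotents $P,Q$ on $L$ with $P+Q=I$ and ranges $H^c$ and $K$. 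Because $L$ is already known to be a Hilbert space, this forces $L \cong H^c \oplus K = (H\oplus K)^c$ with no further norm computation.

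Your ``alternative'' via the inclusion/projection maps and Corollary~\ref{F} is in fact exactly the second half of the paper's argument; what you are missing is the shortcut for the first half. Your main approach via preduals and distributivity of $\otimes_{eh}$ would also work and is more general, but it costs you the verification you flag as the main obstacle (matching the concrete $\oplus^c$ structure from Section~4 with the abstract one, and checking complete isometry). The paper sidesteps that entirely by borrowing the Hilbert-space conclusion from the Morita machinery already built.
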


\begin{proof}  We will just sketch this, since it is
not used here.  By Corollary \ref{ela}, and Theorem {\rm
\ref{hmod}}, we have that $L = (M \oplus^c Y) \otimes^{\sigma h}_M
H^c$ is a column Hilbert space. Moreover, the projections from $M
\oplus^c Y$ onto $M$ and $Y$ respectively, induce by Corollary
\ref{F}, projections $P$ and $Q$ from $L$ onto $M \otimes^{\sigma
h}_M H^c \cong H^c$, and $K$, respectively, such that $P + Q = I$.
\end{proof}

Mimicking the proof of \cite[Theorem 5.1]{BMP} we have:

\begin{theorem}
Let  $(M,N,X,Y)$ be a weak* Morita context. Then there is a lattice
isomorphism between the $w^{*}$-closed $M$-submodules of $X$ and the
lattice of $w^{*}$-closed left ideals in $N$. The $w^{*}$-closed
$M$-$N$-submodules of $X$ corresponds to the $w^{*}$-closed
two-sided ideals in $N$. Similar statements for $Y$ follows by
symmetry. In particular, $M$ and $N$ have isomorphic lattices of
$w^{*}$-closed two-sided ideals.
\end{theorem}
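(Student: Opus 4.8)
The plan is to exhibit an explicit order-preserving bijection between the two lattices and then note that it respects the lattice operations. For a $w^*$-closed left $M$-submodule $W$ of $X$, set $\Phi(W) = [Y,W]^{-w^*}$, the weak* closed linear span of the elements $[y,w]$ with $y \in Y$, $w \in W$; and for a $w^*$-closed left ideal $J$ of $N$, set $\Psi(J) = [XJ]^{-w^*}$. First I would check these land in the correct lattices: since $[\cdot,\cdot]$ is an $N$-bimodule map we have $N[y,w] = [Ny,w] \subseteq [Y,W]$, so $\Phi(W)$ is a $w^*$-closed left ideal of $N$; and since $MX \subseteq X$ we get $M(XJ) = (MX)J \subseteq XJ$, so $\Psi(J)$ is a $w^*$-closed left $M$-submodule of $X$. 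Both maps are evidently inclusion-preserving.

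The heart of the argument is that $\Phi$ and $\Psi$ are mutually inverse, and this is where the defining relations of Theorem \ref{neweq} and the asymptotic identities (\ref{ffa})--(\ref{efa}) enter. For $\Psi(\Phi(W))$ I would use the identity $x[y,w] = (x,y)w$ (coming from $(x,y)x' = x[y,x']$) to rewrite $X[Y,W]$ as $(X,Y)W$. The inclusion $\Psi(\Phi(W)) \subseteq W$ is then immediate, since $(X,Y) \subseteq M$ and $W$ is a $w^*$-closed left $M$-submodule. For the reverse inclusion I would invoke the net $(e_t)$ of (\ref{efa}): for $w \in W$ one has $e_t w = \sum_i (x^t_i,y^t_i)w = \sum_i x^t_i [y^t_i, w] \in X[Y,W]$, and $e_t w \to 1_M w = w$ weak* by separate weak* continuity of the module action, so $w \in \Psi(\Phi(W))$. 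The computation $\Phi(\Psi(J)) = J$ is symmetric, now using $[y,xj] = [y,x]j$ to identify $[Y,XJ]$ with $[Y,X]J$, the inclusion $[Y,X] \subseteq N$ for one direction, and the net $(f_s)$ of (\ref{ffa}) with $f_s j \to j$ weak* for the other. Once $\Phi$ and $\Psi$ are seen to be mutually inverse and inclusion-preserving in both directions, they constitute a lattice isomorphism (an order isomorphism of lattices automatically preserves meets and joins).

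For the refinement to two-sided objects, I would observe that the same maps restrict correctly: if $W$ is in addition a right $N$-submodule then $[Y,W]N = [Y,WN] \subseteq [Y,W]$, so $\Phi(W)$ is a two-sided ideal, and conversely if $J$ is two-sided then $(XJ)N = X(JN) \subseteq XJ$ shows $\Psi(J)$ is an $M$-$N$-submodule; hence $\Phi$ and $\Psi$ cut down to a lattice isomorphism between $w^*$-closed $M$-$N$-submodules of $X$ and $w^*$-closed two-sided ideals of $N$. The statements for $Y$ follow verbatim by the symmetry of the context. Finally, for the ``in particular'' clause I would run the identical argument on the $M$-side of $X$, using the pairing $(\cdot,\cdot)$ in place of $[\cdot,\cdot]$: the maps $W \mapsto (W,Y)^{-w^*}$ and $I \mapsto [IX]^{-w^*}$ (with round-trip identities $(ix,y) = i(x,y)$ and $(w,y)x = w[y,x]$, and the nets $(e_t),(f_s)$) give a lattice isomorphism between $w^*$-closed $M$-$N$-submodules of $X$ and $w^*$-closed two-sided ideals of $M$. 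Composing with the isomorphism already obtained on the $N$-side of $X$ yields the desired isomorphism between the two-sided ideal lattices of $M$ and $N$.

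I expect the main obstacle to be the careful handling of weak* closures: one must ensure that forming $[XJ]^{-w^*}$, then $[Y,\,\cdot\,]^{-w^*}$, and so on, does not lose or gain elements, so that the set-level identities such as $X[Y,W] = (X,Y)W$ upgrade to genuine equalities of $w^*$-closed spans. The recurring device is that the ``easy'' inclusions come for free from $(X,Y) \subseteq M$ and $[Y,X] \subseteq N$, while the ``hard'' inclusions are supplied uniformly by the weak* convergence $e_t \to 1_M$ and $f_s \to 1_N$ together with separate weak* continuity of the actions, exactly as in the strong Morita analogue \cite[Theorem 5.1]{BMP}.
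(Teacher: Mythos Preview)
Your proposal is correct and is precisely the weak*-adaptation of \cite[Theorem 5.1]{BMP} that the paper invokes; the paper itself gives no further details beyond that reference, and your explicit maps $\Phi(W)=[Y,W]^{-w^*}$, $\Psi(J)=[XJ]^{-w^*}$, together with the use of the relations $(x,y)x'=x[y,x']$, $[y,x]n=[y,xn]$ and the nets $(e_t)$, $(f_s)$, constitute exactly that mimicry.
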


We next show, analogously to
 \cite[Section 6]{BMP}, that if $M$ and $N$ are
 $W^*$-algebras, then they are Morita equivalent in Rieffel's sense iff
 they are weakly (or equivalently, weak*) Morita equivalent in our sense.  Indeed we already
 have remarked (Example (2) in Section 3) that Rieffel's
Morita equivalence is an example of our weak Morita equivalence. The
following gives the converse, and more:

\begin{theorem} \label{alsodin}
Let $(M,N,X,Y)$  be a
 weak* Morita context where $N$ is a  $W^*$-algebra. Then $M$ is
 a $W^*$-algebra, and there is a
 completely isometric isomorphism $i : \overline{X} \rightarrow Y$ such
 that $X$ becomes a  $W^{*}$-equivalence $M$-$N$-bimodule (see e.g.\
{\rm 8.5.12} in \cite{BLM} with inner
 products defined by the formulas
$_M\langle x_{1} ,x_{2} \rangle $ = $(x_{1},i(\bar{x_{2}}))$ and
$\langle  x_{1} , x_{2}\rangle_N $ = $\lbrack i(\bar{x_{1}}),x_{2}
\rbrack$.
\end{theorem}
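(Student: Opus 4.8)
The plan is to reduce the statement to the assertion that, in a suitable concrete picture, $Y$ is exactly the space of Hilbert-space adjoints of elements of $X$, and then to read off the $W^*$-equivalence bimodule structure. First I would realize the weak linking algebra concretely, as in Section 4: represent $M$ on $H$, set $K = Y \otimes^{\sigma h}_M H^c$, and form $\mathcal{R} \subseteq B(H \oplus K)$, so that $M,N,X,Y$ appear as the four corners of $\mathcal{R}$, the pairings become operator products $(x,y)=xy$ and $[y,x]=yx$, and (by Theorem \ref{hmod}) $N$ is faithfully and normally represented as a von Neumann subalgebra of $B(K)$. Under this identification the conjugate operator space $\overline{X}$ is canonically $X^{*}\subseteq B(H,K)$ via $\bar{x}\mapsto x^{*}$, so the natural candidate for $i$ is the adjoint map, and with this choice the two formulas become ${}_M\langle x_1,x_2\rangle = x_1 x_2^{*}$ and $\langle x_1,x_2\rangle_N = x_1^{*}x_2$. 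Thus the whole theorem amounts to showing $X^{*}=Y$, equivalently that $\mathcal{R}$ is selfadjoint, hence a von Neumann algebra.

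Granting $X^{*}=Y$, the remaining verifications are routine and I would dispatch them quickly. The compatibility ${}_M\langle x_1,x_2\rangle x_3 = x_1\langle x_2,x_3\rangle_N$ is just $x_1 x_2^{*}x_3 = x_1 x_2^{*}x_3$, which is also exactly the relation $(x,y)x'=x[y,x']$ of Theorem \ref{neweq}. Conjugate symmetry is immediate, and in particular ${}_M\langle x_2,x_1\rangle = (x_1 x_2^{*})^{*}$ shows that $M=\overline{XX^{*}}^{\,w^{*}}$ is selfadjoint, so $M$, being a unital weak*-closed selfadjoint subalgebra of $B(H)$, is a $W^{*}$-algebra. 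Positivity is automatic, since $x^{*}x$ and $xx^{*}$ are positive Hilbert-space operators lying in $N$ and $M$ respectively; and fullness follows from $\overline{\mathrm{span}\,X^{*}X}^{\,w^{*}}=\overline{YX}^{\,w^{*}}=N$ together with the symmetric statement, using the nets of (\ref{ffa}) and (\ref{efa}). Hence $X$ is a $W^{*}$-equivalence $M$-$N$-bimodule in the sense of \emph{8.5.12} in \cite{BLM}.

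It remains to prove the crux, $X^{*}=Y$, and this is where the hypothesis that $N$ is a $W^{*}$-algebra must enter. I would first observe that it suffices to prove the two inclusions $X^{*}X\subseteq N$ and $YY^{*}\subseteq N$, both of which take values in $B(K)$, where the $W^{*}$-algebra lives. Indeed, if $X^{*}X\subseteq N$ then for $x\in X$ we may write $x^{*}=\lim_t x^{*}e_t = \lim_t \sum_i (x^{*}x_i^{t})\,y_i^{t}$, a weak* limit of elements of $NY\subseteq Y$, so $x^{*}\in Y$; symmetrically $YY^{*}\subseteq N$ gives $y^{*}=\lim_t \sum_i x_i^{t}(y_i^{t}y^{*})\in X$, so $Y^{*}\subseteq X$, and together these yield $X^{*}=Y$. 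Purely algebraic manipulation with the nets of (\ref{efa}) and (\ref{ffa}) is circular here, so the genuine content is that these two products are $N$-valued, i.e.\ that $x_1^{*}x_2$ and $y_1 y_2^{*}$ are \emph{inner products} realizing $X$ as a right, and $Y$ as a left, selfdual $W^{*}$-Hilbert $N$-module.

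I expect this positivity/self-duality step to be the main obstacle. My plan for it is to transplant the argument of \cite[Section 6]{BMP}: there the $C^{*}$-identity in the coefficient $C^{*}$-algebra, together with an approximate identity of the special factored form in (\ref{ffa}), is used to prove that the relevant pairing is positive and that the module is a $C^{*}$-imprimitivity bimodule. The same computation, carried out with the net $f_s=\sum_i [y_i^{s},x_i^{s}]\to 1_N$ and with norms interpreted in the $W^{*}$-algebra $N$, should show directly that $y_1 y_2^{*}$ and $x_1^{*}x_2$ lie in $N$ and are positive; the only new point is to verify weak* continuity and to replace norm closures by weak* closures throughout, which is routine given Corollary \ref{mazur} and the density results of Section 2. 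The delicate part is confirming that it is the self-duality of $W^{*}$-modules (equivalently, the monotone completeness of $N$) that forces these products into $N$, rather than merely into the von Neumann algebra generated by $\mathcal{R}$; once that is secured, the reduction above closes the proof.
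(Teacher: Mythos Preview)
Your overall plan matches the paper's: represent the weak linking algebra concretely in $B(H\oplus K)$, identify $i$ with the Hilbert-space adjoint $\bar x\mapsto x^{*}$, and reduce everything to the single assertion $X^{*}=Y$. Once that is known, the verifications you list (selfadjointness of $M$, positivity and fullness of the inner products, the compatibility relation) are indeed routine, and the paper treats them the same way.

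The gap is exactly in what you call ``the delicate part.'' You propose to obtain $X^{*}X\subseteq N$ and $YY^{*}\subseteq N$ by transplanting the argument of \cite[Section 6]{BMP}, claiming that ``the only new point is to verify weak* continuity and to replace norm closures by weak* closures throughout, which is routine.'' It is not. The computation in \cite[Theorem 6.2]{BMP} uses that the factored nets are genuine approximate identities: one needs norm (equivalently, strong) convergence so that quantities like $e_t^{*}e_t$ and $f_s^{*}f_s$ tend to the identity. In the weak* Morita setting you only have $e_t\to 1_M$ and $f_s\to 1_N$ weak*, and weak* convergence of $(e_t)$ does \emph{not} imply $e_t^{*}e_t\to I_H$, so the BMP computation breaks down. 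Your appeal to ``self-duality of $W^{*}$-modules'' is circular here, since one does not yet know that $X$ carries an $N$-valued inner product.

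The missing idea, which the paper supplies, is a Mazur-type upgrade: pass to convex combinations of the $e_t$ (and of the $f_s$). The factored form \eqref{efa} is stable under convex combinations, and since the WOT-closure and the SOT-closure of a convex set in $B(H)$ coincide, one can rechoose the nets so that $e_t\to I_H$ and $f_s\to I_K$ \emph{strongly}. Then $e_t^{*}e_t\to I_H$ weak*, and now the argument of \cite[Theorem 6.2]{BMP} goes through verbatim to give $Y\subseteq X^{*}$ and $X\subseteq Y^{*}$ directly, hence $X^{*}=Y$. Note that the paper does not first prove $X^{*}X\subseteq N$ as you propose; that inclusion is a consequence, not an input.
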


\begin{proof}  First we represent the linking algebra on a Hilbert
space $H \oplus K$ as above.  We rechoose the net $(e_t)$ such that
$e_t \to I_H$ strongly, so that $e_t^* e_t \to I_H$
thus weak*, and similarly for the net $(f_s)$.   To accomplish this,
note that the WOT-closure of the convex hull of the $(e_t)$ equals
the SOT-closure, by elementary operator theory.  However it is easy
to see that the form in (\ref{ffa}) is preserved if we
replace  $e_s$ by convex combinations of the $e_t$.  Now one can
follow the proof of \cite[Theorem 6.2]{BMP} to deduce that the
adjoint of any $y \in Y$ is a limit of terms in $X$. That is $Y
\subset X^*$.  Similarly, $X \subset Y^*$. So $X = Y^*$, and so it
follows that $M$ is a $W^*$-algebra, and $X$ is a WTRO (this term
was defined in the list of examples in Section 3) setting up a
$W^*$-algebra Morita equivalence.  We leave the rest as an exercise.
\end{proof}

The following is the nonselfadjoint analogue of a theorem of
Rieffel.   A special case of it is mentioned, with a
proof sketch, at the end of \cite{BM2}.

\begin{theorem} \label{didn} Let $H$ be a universal normal representation for $M$,
and let $K$ be the induced representation of $N$ studied above. Then
$M' \cong N'$; that is there is a completely isometric
$w^*$-continuous isomorphism $\theta : B_M(H) \cong B_N(K)$. Writing
${\mathcal R}$ for either of these commutants, we have $X \cong \,
B_{\mathcal R}(K,H)$ and $Y  \cong \, B_{\mathcal R}(H,K)$
completely isometrically and as dual operator bimodules.
\end{theorem}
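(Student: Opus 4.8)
The plan is to work concretely on $H \oplus K$, using the representation of the weak linking algebra constructed just above, in which $\Psi : X \to B(K,H)$ and $\Phi : Y \to B(H,K)$ are the $w^*$-continuous completely isometric bimodule maps with weak*-closed ranges, and to identify $X$ and $Y$ with the off-diagonal intertwiner spaces of the common commutant. First I would produce the isomorphism $\theta : M' \to N'$. Since $K = Y \otimes^{\sigma h}_M H^c$ is precisely the image $\mathcal{F}(H)$ of $H$ under the equivalence functor $\mathcal{F} = Y \otimes^{\sigma h}_M -$ of Theorem~\ref{hmod}, and an equivalence of categories is bijective on endomorphism spaces, the assignment $\theta(T) = I_Y \otimes_M T$ carries $M' = B_M(H)$ bijectively onto $N' = B_N(K)$. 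It is a unital homomorphism by functoriality, and it is a complete isometry because both $\mathcal{F}$ and its quasi-inverse $\mathcal{G}$ are completely contractive on morphism spaces while $\mathcal{G}\mathcal{F} \cong \mathrm{Id}$. A unital completely isometric algebra isomorphism of von Neumann algebras is a $*$-isomorphism, hence $w^*$-continuous; this already yields the first assertion. I write $\mathcal{R}$ for this common commutant, acting on $H$ as $M'$ and on $K$ as $N'$ through $\theta$.

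Next comes the easy inclusion. A direct check on elementary tensors, extended by weak* density, gives $p\,\Psi(x) = \Psi(x)\,\theta(p)$ and $\theta(p)\,\Phi(y) = \Phi(y)\,p$ for all $p \in M'$, $x \in X$, $y \in Y$; for instance $\Psi(x)\theta(p)(y \otimes \zeta) = (x,y)\,p\zeta = p\,(x,y)\zeta = p\,\Psi(x)(y \otimes \zeta)$, since $(x,y) \in M$ commutes with $p$. Hence $\Psi(X) \subseteq B_{\mathcal{R}}(K,H)$ and $\Phi(Y) \subseteq B_{\mathcal{R}}(H,K)$; as $\Psi$ and $\Phi$ are already $w^*$-continuous complete isometries and $M$-$N$-bimodule maps, everything reduces to proving these inclusions are onto.

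The crux is the surjectivity, and here the main tool will be an asymptotic factorization of $I_K$. Using the net $(f_s)$ of (\ref{ffa}), $f_s = \sum_i [y_i^s, x_i^s] \to 1_N$ weak*, together with $[y_i^s, x_i^s] \cdot (y \otimes \zeta) = y_i^s \otimes (x_i^s, y)\zeta = \Phi(y_i^s)\Psi(x_i^s)(y \otimes \zeta)$, one sees that $\sum_i \Phi(y_i^s)\Psi(x_i^s)$ is the action of $f_s$ on $K$ and so converges weak* to $I_K$. Writing $\varphi_s = [\Psi(x_i^s)]_i : K \to C_{n_s}(H)$ and $\psi_s = [\Phi(y_i^s)]_i : C_{n_s}(H) \to K$, these are uniformly bounded by (\ref{ffa}) and the complete isometry of $\Psi,\Phi$, and $\psi_s\varphi_s \to I_K$ weak*. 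Now take $T \in B_{\mathcal{R}}(K,H)$. Each $T\Phi(y_i^s) : H \to H$ commutes with $M'$ (from $T\theta(p) = pT$ and $\Phi(y_i^s)p = \theta(p)\Phi(y_i^s)$), hence lies in the von Neumann algebra $M''$ generated by $M$ on $H$. Since $\Psi$ is a left $M$-module map, $\Psi(X)$ is invariant under left multiplication by the image of $M$, and being weak*-closed it is therefore a left $M''$-module. Consequently each $T\psi_s\varphi_s = \sum_i T\Phi(y_i^s)\Psi(x_i^s)$ lies in $\Psi(X)$; since left multiplication by $T$ is weak*-continuous, $T\psi_s\varphi_s \to T$ weak*, and weak*-closedness of $\Psi(X)$ forces $T \in \Psi(X)$. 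Thus $\Psi(X) = B_{\mathcal{R}}(K,H)$.

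Finally, $Y \cong B_{\mathcal{R}}(H,K)$ follows by the symmetric argument, factoring $I_H$ through rows and columns via the net $(e_t)$ of (\ref{efa}) and the weak*-closed right $M''$-module $\Phi(Y)$; the completely isometric and dual-bimodule assertions are then read off from the properties of $\Psi$ and $\Phi$ recorded in Section~4. I expect the surjectivity step to be the main obstacle: the delicate points are confirming that $T\Phi(y_i^s)$ genuinely lands in $M''$ rather than merely in $B(H)$, and that $\Psi(X)$ is stable under the left $M''$-action, so that the weak* limit of the factorization remains inside the weak*-closed space $\Psi(X)$. The universality of $H$ enters to ensure that $M''$ is all of $W^*_{\rm max}(M)$ and, by symmetry, that $K$ is a universal representation of $N$, which is what makes the two commutant descriptions match canonically.
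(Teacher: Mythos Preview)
Your argument for the isomorphism $\theta : M' \to N'$ and for the easy inclusions $\Psi(X) \subseteq B_{\mathcal R}(K,H)$, $\Phi(Y) \subseteq B_{\mathcal R}(H,K)$ is correct and essentially matches the paper. The surjectivity step, however, contains a genuine gap.

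The problematic sentence is: ``Since $\Psi$ is a left $M$-module map, $\Psi(X)$ is invariant under left multiplication by the image of $M$, and being weak*-closed it is therefore a left $M''$-module.'' This implication is false in general. Take $H = K = \Cdb^2$, let $M$ be the upper triangular $2 \times 2$ matrices, and let $V = M \subset B(K,H)$. Then $V$ is weak*-closed and $M V \subseteq V$, but $M' = \Cdb I$, so $M'' = M_2$, and $e_{21} \cdot e_{11} = e_{21} \notin V$. Thus weak*-closedness plus $M$-invariance does \emph{not} force $M''$-invariance when $M$ is non-selfadjoint; the weak* closure of $M$ is $M$ itself, not $M''$. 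Since every term $T\Phi(y_i^s)\Psi(x_i^s)$ in your approximation is of the form $a \cdot \Psi(x)$ with $a \in M''$ but not obviously in $M$, you cannot conclude it lies in $\Psi(X)$, and the limit argument collapses. You correctly flag this as ``the delicate point,'' but the justification offered does not survive.

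The paper circumvents this by passing to the weak linking algebra ${\mathcal L}^w$ acting on $H \oplus K$. Because $M$ and ${\mathcal L}^w$ are weak* Morita equivalent and $H$ is normal universal for $M$, the induced representation of ${\mathcal L}^w$ on $H \oplus K$ is again normal universal (Theorem~\ref{hmod} preserves this). The double commutant theorem of \cite{BSo} then applies to ${\mathcal L}^w$, and computing $({\mathcal L}^w)'$ as the diagonal copy $\{p \oplus \theta(p) : p \in M'\}$ and then $({\mathcal L}^w)''$ corner by corner, as in 8.5.32 of \cite{BLM}, identifies the off-diagonal corners with $B_{\mathcal R}(K,H)$ and $B_{\mathcal R}(H,K)$. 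Universality of $H$ is thus not an afterthought but the mechanism that makes $({\mathcal L}^w)''$ computable; your direct factorization approach does not exploit it, which is precisely why it cannot close the gap.
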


\begin{proof}
One uses the equivalence of categories to see that $B_M(H) \cong
B_N(\mathcal{F}(H)) = B_N(K)$ completely isometrically, in the
notation of Theorem \ref{omod}.   That is, $M' \cong N'$ as
asserted, and it is easy to argue that if $\theta$ is this
isomorphism then $\Phi(y) T = \theta(T) \Phi(y)$ for all $y \in Y, T
\in M'$.  Here $\Phi$ is as in the discussion at the start of
Section 4.  Now mimic the proof of 8.5.32 and 8.5.37 in \cite{BLM}.
The main point to bear in mind is that since $M$ is weak* Morita
equivalent to the weak linking algebra ${\mathcal L}^w$, the induced
representation of ${\mathcal L}^w$ is also a universal normal
representation, by easy category theoretic arguments.   Thus by
\cite{BSo} it satisfies the double commutant theorem.  Carefully
computing the first, and then the second, commutants of ${\mathcal
L}^w$ as in 8.5.32 in \cite{BLM}, and using the double commutant
theorem, gives the result.
\end{proof}

\example  If $M$ and $N$ are {\em finite dimensional} then weak*
Morita equivalence equals strong Morita equivalence, and coincides
also with the equivalence considered in \cite{Elef1,Elef2}, that is,
weak* stable isomorphism \cite{EP}. Indeed if $(M, N, X, Y)$ is a
weak* Morita context, then it is clearly a strong Morita context,
and by \cite[Lemma 2.8]{BMP} we can actually factor the identity map
$I_Y$ through $C_n(M)$ for some $n \in \Ndb$, so that $Y$ is finite
dimensional. Similarly, $X$ is finite dimensional.   To see that
this implies that $M$ and $N$ are weak* stably isomorphic, note that
in this situation, since
 $M \cong X \otimes^{\sigma h}_{N} Y $, there is
 a norm $1$ element in $X \otimes_h Y$ mapping to $1_M$.
 Similarly for $1_N$, and it is evident that
 one has  what is called a `quasi-unit of norm $1$' in
\cite[Section 7]{BMP}.  By \cite[Corollary 7.9]{BMP}, $M$ and $N$
are stably isomorphic, and taking second duals and using e.g.\
(1.62) in \cite{BLM}, we see that they are weak* stably isomorphic.
In the infinite dimensional case however, all these notions are distinct.

\section{Morita equivalence of generated $W^{*}$-algebras}

From \cite{BMN} or \cite{DB2}, we know that a strong Morita
equivalence of operator algebras in the
 sense of \cite{BMP} `dilates' to, or is a subcontext of,
a strong
 Morita equivalence in the sense of Rieffel, of  containing
 $C^*$-algebras.  This happens in a very tidy way.  More
 particularly, suppose that $ (A, B , X, Y )$
is a strong Morita context of operator algebras $A$ and $B$.
 Then  any $C^*$-algebra $C$ generated by $A$ induces
 a $C^*$-algebra $D$ generated by $B$, and $C$ and $D$ are
  strongly
 Morita equivalent in the sense of Rieffel \cite{RF1}, with
 equivalence bimodule the `$C^*$-dilation' (see \cite{DB6})
 $C \otimes_{hA} X$.  Moreover the linking algebra
 for $A$ and $B$ is (completely isometrically)
a subalgebra of the linking $C^*$-algebra
 for $C$ and $D$.   We see next that all of this, and the accompanying
 theory, will extend to
 our present setting.  Although one may use any `$W^*$-cover' in
the arguments below, for specificity, the maximal $W^{*}$-algebra $W^*_{\rm
 max}(M)$ from \cite{BSo} will take the place of $C$ above, and
the `maximal $W^*$-dilation'
 $W^*_{\rm max}(M) \otimes^{\sigma h}_M X$ will play the role of
 the $C^*$-dilation.   One can develop a theory for this
`$W^*$-dilation' in a general setting analogously to \cite{DB6,BMN},
but we shall not take the time to do this here (see \cite{UK}). We
will however state that just as in \cite{DB6}, any (left, say)
dual operator
$M$-module
is completely isometrically embedded in its `maximal
$W^*$-dilation', via the $M$-module map $x \mapsto 1 \otimes x$, which is weak*
continuous.

Throughout this section again, $(M,N,X,Y)$ is a
 weak* Morita context.
We shall show that the `left' and `right'
 $W^*$-dilations coincide, and constitutes a bimodule implementing
 the $W^*$-algebraic Morita  equivalence between $W^*_{\rm
max}(M)$ and $W^*_{\rm max}(N)$.

\begin{theorem} \label{wiscs}
The $W^*$-dilation $Y \otimes^{\sigma h}_M \, W^*_{\rm max}(M)$ is a
right $C^*$-module over $W^*_{\rm max}(M)$.
\end{theorem}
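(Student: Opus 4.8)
The plan is to realise the dilation concretely on Hilbert space and read off the $C^*$-module inner product from the operator adjoint. First I would represent $M$ on a universal normal representation $H$, so that by Lemma \ref{hi} the canonical extension $\tilde{\pi} : W^*_{\rm max}(M)\to B(H)$ is one-to-one and normal; hence I may identify $W^*_{\rm max}(M)$ with the von Neumann algebra $M''\subseteq B(H)$ (the bicommutant of $\tilde{\pi}(M)$). Let $K = Y\otimes^{\sigma h}_M H^c$ and let $\Phi : Y\to B(H,K)$, $\Psi : X\to B(K,H)$ be the weak* continuous complete isometries from the start of Section 4, so that $(x,y) = \Psi(x)\Phi(y)$ as operators on $H$. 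The bilinear map $(y,w)\mapsto \Phi(y)\tilde{\pi}(w)$ from $Y\times W^*_{\rm max}(M)$ into $B(H,K)$ is separately weak* continuous, completely contractive, and $M$-balanced, so it induces a weak* continuous complete contraction $\bar{\Phi} : Y\otimes^{\sigma h}_M W^*_{\rm max}(M)\to B(H,K)$ with $\bar{\Phi}(y\otimes_M w) = \Phi(y)\tilde{\pi}(w)$.

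Next I would show $\bar{\Phi}$ is a complete isometry onto the weak*-closed right $W^*_{\rm max}(M)$-submodule $Z_0 := [\Phi(Y)\, W^*_{\rm max}(M)]^{-w^{*}}$, mimicking the asymptotic-factorization argument of Lemma \ref{ball} and Theorem \ref{hmod}. Using the net $(f_s)$ of the form (\ref{ffa}), set $\Xi_s = [\Psi(x_i^s)]_i : K\to H^{(n_s)}$ and $\psi_s([m_i]) = \sum_i y_i^s m_i$; the relations of Theorem \ref{neweq} give $\psi_s(\Xi_s\Phi(y)) = f_s\, y$, and by Corollary \ref{F} and Lemma \ref{C} these extend to maps on $Z:= Y\otimes^{\sigma h}_M W^*_{\rm max}(M)$ with $\psi_s(\Xi_s\,\bar{\Phi}(u)) = f_s\cdot u\to u$ weak*. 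Since (\ref{ffa}) forces $\Vert\psi_s\Vert_{cb}\,\Vert\Xi_s\Vert < 1$, weak* lower semicontinuity of the norm yields $\Vert u\Vert\leq\Vert\bar{\Phi}(u)\Vert$ at every matrix level, so $\bar{\Phi}$ is a complete isometry and $Z\cong Z_0$.

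The conceptual heart, and the step I expect to be the main obstacle, is to show that the operator inner product takes values in $W^*_{\rm max}(M)$, i.e.\ that $\Phi(y_1)^*\Phi(y_2)\in M''$ for all $y_1,y_2\in Y$. By the bicommutant theorem it is enough to check that $\Phi(y_1)^*\Phi(y_2)$ commutes with every $T\in M' = B_M(H)$. Here I would invoke the commutant relation $\Phi(y)T = \theta(T)\Phi(y)$ of Theorem \ref{didn}, where $\theta : M'\to N'$ is the (unital, completely isometric, hence $*$-) isomorphism of von Neumann algebras. Taking adjoints gives $T\Phi(y_1)^* = \Phi(y_1)^*\theta(T)$, whence
$$\Phi(y_1)^*\Phi(y_2)\,T = \Phi(y_1)^*\theta(T)\Phi(y_2) = T\,\Phi(y_1)^*\Phi(y_2),$$
as required. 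The delicate point is to make certain that $\theta$ is genuinely a $*$-isomorphism (so that $\theta(T)^* = \theta(T^*)$); this follows since a unital completely isometric algebra isomorphism between the von Neumann algebras $M'$ and $N'$ is automatically a $*$-isomorphism.

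Finally I would assemble the pieces. Since $W^*_{\rm max}(M) = M''$ is an algebra and is weak* closed, while multiplication and the adjoint on $B(H)$ are separately weak* continuous, the inclusion $\Phi(Y)^*\Phi(Y)\subseteq M''$ propagates to $Z_0^*Z_0\subseteq W^*_{\rm max}(M)$. Thus $\langle z_1,z_2\rangle := z_1^*z_2$ defines a $W^*_{\rm max}(M)$-valued inner product on the weak*-closed (hence norm-complete) right module $Z_0$, positive definite because $\Phi$ is isometric, and inducing the given norm via $\Vert z\Vert^2 = \Vert z^*z\Vert$. Transporting this structure back along the complete isometry $\bar{\Phi}$ exhibits $Y\otimes^{\sigma h}_M W^*_{\rm max}(M)$ as a right $C^*$-module (indeed a $W^*$-module) over $W^*_{\rm max}(M)$.
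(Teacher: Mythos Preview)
Your overall architecture matches the paper's: realise $Z = Y\otimes^{\sigma h}_M W^*_{\rm max}(M)$ inside $B(H,K)$ via a map $\bar\Phi$, use the asymptotic factorisation coming from (\ref{ffa}) to see that $\bar\Phi$ is a complete isometry, and then define the inner product by $\langle z_1,z_2\rangle = \bar\Phi(z_1)^*\bar\Phi(z_2)$ and argue via the double commutant theorem that this lands in $W^*_{\rm max}(M)$. Up to notation this is exactly the paper's route.

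There is, however, a genuine gap at what you rightly call the conceptual heart. You assert that $M' = B_M(H)$ and $N' = B_N(K)$ are von Neumann algebras and that the completely isometric isomorphism $\theta$ of Theorem \ref{didn} is therefore a $*$-isomorphism. But $M$ is a nonselfadjoint dual operator algebra, so $M'$ is only a weak* closed unital subalgebra of $B(H)$; in general it is \emph{not} selfadjoint, and for $T\in M'$ one need not have $T^*\in M'$. Thus $\theta(T^*)$ is not defined by Theorem \ref{didn}, and your adjoint manipulation ``$T\Phi(y_1)^* = \Phi(y_1)^*\theta(T)$'' (which implicitly uses $\Phi(y_1)T^* = \theta(T^*)\Phi(y_1)$ together with $\theta(T^*)^* = \theta(T)$) is unjustified. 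Relatedly, identifying $W^*_{\rm max}(M)$ with ``$M''$'' is ambiguous: the literal double commutant $(M')'$ of a nonselfadjoint $M$ is not the same object as the von Neumann algebra $\mathcal R$ generated by $M$, and it is membership in $\mathcal R$ that you need.

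The paper's fix is exactly to avoid this trap. One observes that $\mathcal R' = M'\cap(M')^* = \Delta(M')$, the diagonal of $M'$, which \emph{is} a von Neumann algebra; a unital complete isometry between operator algebras restricts to a $*$-isomorphism on diagonals (2.1.2 in \cite{BLM}), so $\theta|_{\Delta(M')}$ is a $*$-isomorphism onto $\Delta(N')$. For $T\in\Delta(M')$ one then has both $T^*\in M'$ and $\theta(T)^* = \theta(T^*)$, so your computation goes through verbatim and yields $\bar\Phi(z)^*\bar\Phi(w)\in(\Delta(M'))' = \mathcal R'' = \mathcal R = W^*_{\rm max}(M)$. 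Replacing ``$T\in M'$'' by ``$T\in\Delta(M') = \mathcal R'$'' throughout your third paragraph repairs the argument and brings it in line with the paper's proof.
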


\begin{proof}   With $H$ a normal universal Hilbert $M$-module as usual, we
may view $W^*_{\rm max}(M)$ as the von Neumann algebra ${\mathcal
R}$ generated by $M$ in $B(H)$.  Let $K = Y \otimes^{\sigma h}_M \,
H^c$ as usual, and let $Z = Y \otimes^{\sigma h}_M \, W^*_{\rm
max}(M)$.  Note that
$$Z \otimes^{\sigma h}_{W^*_{\rm max}(M)} H^c \cong Y
\otimes^{\sigma h}_M \, W^*_{\rm max}(M) \otimes^{\sigma
h}_{W^*_{\rm max}(M)} H^c \cong Y \otimes^{\sigma h}_M \, H^c = K
.$$ This allows us to define a completely contractive
weak$^{*}$-continuous $\phi : Z \to B(H,K)$ given by $\phi(y \otimes
a)(\zeta) = y \otimes a \zeta$, for $y \in Y, a \in {\mathcal R},
\zeta \in H$. Note that $\phi$ restricted to the copy of $Y$ is just
the map $\Phi$ at the start of Section 4.
  We are following the ideas of \cite[p.\
286-288]{DB1}.  It is clear that $\phi$ is a ${\mathcal R}$-module
map.   By the discussion just below Corollary  \ref{isws}, combined with Corollary \ref{F},
there are nets of maps $\varphi_s \otimes I : Z \to C_{n_s}(M)
\otimes^{\sigma h}_M \, W^*_{\rm max}(M) \cong C_{n_s}(W^*_{\rm
max}(M))$, and maps $\psi_s \otimes I$, with $(\psi_s  \otimes
I)(\varphi_s \otimes I) (z) = f_s z \to z$ weak* for all $z \in Z$.
Here $(f_s)$ is as in (\ref{ffa}), and the last convergence follows
from e.g.\ \cite[Lemma 2.3]{EP}. We have $\Vert [ f_s z_{ij} ] \Vert
\leq \Vert [(\varphi_s \otimes I)(z_{ij})] \Vert \leq \Vert
[\phi(z_{ij})] \Vert$.  This follows, as in \cite[p.\ 287]{DB1},
from the fact that there is a sequence of weak* continuous complete
contractions
$$B(H,K) \to B(H,C_{n_t}(M) \otimes^{\sigma h}_M \,
W^*_{\rm max}(M) \otimes^{\sigma h}_{W^*_{\rm max}(M)} H^c) \cong
B(H,C_{n_t}(H^c))$$ that maps $\phi(y \otimes a)$ to $\varphi_s(y)
a$, for $y \in Y, a \in {\mathcal R}$, and hence maps $\phi(z)$ for
 $z \in Z$, to $(\varphi_s \otimes I)(z)$.  As in \cite[p.\ 287]{DB1},
 it follows that $\phi$ is a complete
isometry.

Define $\langle z , w \rangle = \phi(z)^* \phi(w)$ for $z, w \in Z$.
To see that this is a ${\mathcal R}$-valued inner product on $Z$, we
will use von Neumann's double commutant theorem (this is a well
known idea). Note that if $\Delta(A) = A \cap A^*$ is the `diagonal'
of a subalgebra of $B(H)$, then ${\mathcal R}' = \Delta(M')$, the
`prime' denoting commutants.   The proof of Theorem \ref{didn} shows
that there is a completely isometric isomorphism $\theta : M' \to
N'$, such that $\Phi(y)T = \theta(T) \Phi(y)$ for $y \in Y, T \in
M'$, where $\Phi(y)(\zeta) = y \otimes \zeta \in K$, for $\zeta \in
H$. By 2.1.2 in \cite{BLM}, $\theta$ restricts to a $*$-isomorphism
from $\Delta(M') = {\mathcal R}'$ onto $\Delta(N')$. It follows
that, in the notation of Theorem 5.1, if $y \in Y, a \in {\mathcal
R}, \zeta \in H, T \in M'$ that
$$\phi(y \otimes a)(T \zeta) = y \otimes a T \zeta = y \otimes T a
\zeta = \Phi(y) T (a \zeta) = \theta(T)  \Phi(y) (a \zeta) =
\theta(T) \phi(y \otimes a)(\zeta) .$$  Hence if $w, z \in
Z$ then
$$\phi(z)^* \phi(w) T = \phi(z)^* \theta(T) \phi(w) =
(\theta(T^*) \phi(z))^* \phi(w) = (\phi(z) T^*)^* \phi(w) = T
\phi(z)^* \phi(w) ,$$ so that $\phi(z)^* \phi(w) \in {\mathcal R}''
= {\mathcal R}$.

Thus $Z$ is a right $C^*$-module over $W^*_{\rm max}(M)$, completely
isometrically isomorphic to the WTRO Ran$(\phi)$.
\end{proof}

\begin{theorem} \label{wismo}
Suppose that $(M,N,X,Y)$ is a weak* Morita context.  Then
$W^{*}_{\rm max}(M)$ and $W^{*}_{\rm max}(N)$ are Morita equivalent
$W^{*}$-algebras in the sense of Rieffel, and the associated
equivalence bimodule is $Y \otimes^{\sigma h}_M \, W^*_{\rm
max}(M)$.  Moreover, $Y \otimes^{\sigma h}_M \, W^*_{\rm max}(M)
\cong W^*_{\rm max}(N) \otimes^{\sigma h}_N Y$ completely
isometrically.  Analogous assertions hold with $Y$ replaced by $X$.
 Finally, the $W^*$-algebra linking algebra for this Morita
equivalence contains completely isometrically as a subalgebra the
linking algebra ${\mathcal L}^w$ defined earlier for the context
$(M,N,X,Y)$.
\end{theorem}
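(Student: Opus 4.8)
The plan is to realize the equivalence bimodule as a concrete weak$^*$-closed TRO (WTRO) and then verify the two weak$^*$-density conditions in the WTRO description of Rieffel's $W^*$-equivalence recalled in Example (4) of Section 3. I keep the notation of the proof of Theorem \ref{wiscs}: $H$ is a universal normal Hilbert $M$-module, $\mathcal{R} = W^*_{\rm max}(M)$ is the von Neumann algebra generated by $M$ in $B(H)$, $K = Y \otimes^{\sigma h}_M H^c$ carries the induced normal representation $\rho$ of $N$, and $\phi : Z \to B(H,K)$ is the completely isometric weak$^*$-continuous map identifying $Z = Y \otimes^{\sigma h}_M \mathcal{R}$ with the WTRO $\mathrm{Ran}(\phi)$. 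Write $\Phi = \phi|_Y$ and let $\Psi : X \to B(K,H)$ be the map from Section 4. First I would record the two basic operator identities $\Psi(x)\Phi(y) = (x,y) \in M \subseteq \mathcal{R}$ and $\Phi(y)\Psi(x) = \rho([y,x]) \in \rho(N)$, which follow from the definitions and the relations $(x,y)x' = x[y,x']$ and $y'(x,y) = [y',x]y$. Since $\mathrm{span}\,(X,Y)$ and $\mathrm{span}\,[Y,X]$ are weak$^*$-dense in $M$ and $N$, and since $K$ is a universal normal representation of $N$ (this I would first confirm, using that the category equivalence of Theorem \ref{hmod} carries universal normal modules to universal normal modules, as in the proof of Theorem \ref{didn}), the weak$^*$-closed spans of these products generate $\mathcal{R}$ and $W^*_{\rm max}(N)$ as von Neumann algebras.

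The crux is to show that $Z$ coincides with the intertwiner WTRO
\[ \mathcal{T} \;=\; \{\, W \in B(H,K) : W S = \theta(S) W \text{ for all } S \in \Delta(M') \,\}, \]
where $\theta : M' \to N'$ and the identification $\mathcal{R}' = \Delta(M')$ are those produced in the proofs of Theorems \ref{didn} and \ref{wiscs}; recall $\theta$ restricts to a $*$-isomorphism of $\Delta(M')$ onto $\Delta(N')$. The inclusion $Z \subseteq \mathcal{T}$ is already in the proof of Theorem \ref{wiscs}, where $\phi(z)S = \theta(S)\phi(z)$ is shown for $S \in M'$. For the reverse inclusion I would first check that $\Psi(x)^* \in \mathcal{T}$ for every $x$: a short computation on simple tensors $y \otimes \zeta$ gives $\Psi(x)\theta(S) = S\Psi(x)$ for $S \in \Delta(M')$ (both sides send $y \otimes \zeta$ to $(x,y)S\zeta$, as $S$ commutes with $(x,y) \in \mathcal{R}$), and taking adjoints, using that $\theta$ is a $*$-map on $\Delta(M')$, yields $\theta(S)\Psi(x)^* = \Psi(x)^* S$, i.e. $\Psi(x)^* \in \mathcal{T}$ and $\Psi(x) \in \mathcal{T}^*$. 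A parallel TRO computation shows $\mathcal{T}^* \mathcal{T} \subseteq (\mathcal{R}')' = \mathcal{R}$. Now for any $W \in \mathcal{T}$, since $\rho(f_s) \to 1_{B(K)}$ weak$^*$ with $\rho(f_s) = \sum_j \Phi(y^s_j)\Psi(x^s_j)$, one has $W = \mathrm{w}^*\text{-}\lim_s \rho(f_s) W = \mathrm{w}^*\text{-}\lim_s \sum_j \Phi(y^s_j)\bigl(\Psi(x^s_j) W\bigr)$; each $\Psi(x^s_j) W \in \mathcal{T}^*\mathcal{T} \subseteq \mathcal{R}$, so $\rho(f_s) W \in \overline{\Phi(Y)\mathcal{R}}^{w^*} = Z$, and weak$^*$-closedness of $Z$ gives $W \in Z$. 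Thus $\mathcal{T} = Z$.

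With $Z = \mathcal{T}$, fullness is immediate: $(x,y) = \Psi(x)\Phi(y) \in \mathcal{T}^*\mathcal{T} = Z^*Z$ gives $M \subseteq \overline{Z^*Z}^{w^*}$, whence $\overline{Z^*Z}^{w^*} = \mathcal{R} = W^*_{\rm max}(M)$, and symmetrically $\overline{ZZ^*}^{w^*} = W^*_{\rm max}(N)$. By the WTRO characterization of Rieffel equivalence, $W^*_{\rm max}(M)$ and $W^*_{\rm max}(N)$ are $W^*$-Morita equivalent with equivalence bimodule $Z = Y \otimes^{\sigma h}_M W^*_{\rm max}(M)$. To identify this with the left dilation $W^*_{\rm max}(N) \otimes^{\sigma h}_N Y$, I would examine the canonical weak$^*$-continuous complete contraction $W^*_{\rm max}(N) \otimes^{\sigma h}_N Y \to Z$, $c \otimes y \mapsto c\,\Phi(y)$ (well-defined into $Z$ since $\overline{ZZ^*}^{w^*}Z \subseteq Z$); the same factorization trick shows its range is $\overline{W^*_{\rm max}(N)\Phi(Y)}^{w^*} = Z$, and complete isometry follows from an asymptotic factorization through the $\varphi_s,\psi_s$ of the paragraph below Corollary \ref{isws} (so that the left and right $W^*$-dilations coincide). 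The statements with $X$ follow by the symmetry $M \leftrightarrow N$, $X \leftrightarrow Y$, the relevant WTRO being $Z^* \cong X \otimes^{\sigma h}_N W^*_{\rm max}(N)$. Finally, the $W^*$-linking algebra of the equivalence is the linking von Neumann algebra of $Z$, i.e. the von Neumann algebra generated on $H \oplus K$ by the weak linking algebra $\mathcal{L}^w$ (its representation there being universal normal, as in the proof of Theorem \ref{didn}); since $M$, $\rho(N)$, $\Phi(Y)$, and $\Psi(X)$ lie completely isometrically in the corners $\mathcal{R}$, $W^*_{\rm max}(N)$, $Z$, $Z^*$, the embedding of $\mathcal{L}^w$ in these $2\times 2$ matrices is the asserted completely isometric inclusion.

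I expect the main obstacle to be the reverse inclusion $\mathcal{T} \subseteq Z$ — pinning the adjoints $\Psi(x)^*$, and then all intertwiners, back inside $Z$. This is precisely where the coincidence of the left and right $W^*$-dilations is used, and it rests on both the spatial $*$-isomorphism $\theta$ on the diagonals and the approximate-identity factorization $\rho(f_s) \to 1$. The bimodule-isomorphism and linking-algebra claims are then mostly bookkeeping, modulo the complete-isometry verifications, which follow the asymptotic-factorization template already established in the paper.
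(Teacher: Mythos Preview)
Your argument is essentially correct, but it takes a genuinely different route from the paper's proof. The paper does not work with the intertwiner TRO $\mathcal{T}$ at all. Instead, it writes down two explicit $2\times 2$ matrix subalgebras of $B(H\oplus K)$: one with $W^*_{\rm max}(M)$ in the $1{-}1$ corner and $\overline{YW^*_{\rm max}(M)}^{w^*}$, $\overline{W^*_{\rm max}(M)X}^{w^*}$, $\overline{YW^*_{\rm max}(M)X}^{w^*}$ in the others, and a symmetric one built from $W^*_{\rm max}(N)$. Each is the weak linking algebra of a weak* Morita context in which one corner is a $W^*$-algebra, so Theorem~\ref{alsodin} forces both to be genuine $W^*$-algebras. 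Since each is visibly the von Neumann algebra generated by $\mathcal{L}^w$, they coincide; comparing the $2{-}1$ corners gives $\overline{YW^*_{\rm max}(M)}^{w^*}=\overline{W^*_{\rm max}(N)Y}^{w^*}$ at one stroke, and the map $\phi$ of Theorem~\ref{wiscs} then identifies each tensor product with this common space. Everything else (Rieffel equivalence, the linking-algebra inclusion) is read off from this single equality $\mathcal{L}_1=\mathcal{L}_2$.

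Your approach trades this one clever application of Theorem~\ref{alsodin} for a direct double-commutant/intertwiner argument: you pin $Z$ down as exactly the space of operators intertwining $\mathcal{R}'=\Delta(M')$ and $\theta(\Delta(M'))=\Delta(N')$, and then use the approximate identities $(f_s)$ and $(e_t)$ to pull arbitrary intertwiners back into $Z$ and to establish fullness. This is more hands-on but also more self-contained, since it does not invoke Theorem~\ref{alsodin}. The one place your sketch is thin is the complete isometry of $W^*_{\rm max}(N)\otimes^{\sigma h}_N Y\to Z$: the relevant asymptotic factorization here is of $I_Y$ through $R_n(N)$ (coming from $(e_t)$ and the identity $ye_t=\sum_i[y,x^t_i]\,y^t_i$), not the $\varphi_s,\psi_s$ you cite, and you should run the mirror image of the $\phi$-argument from Theorem~\ref{wiscs} on the left rather than the right. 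The paper sidesteps this entirely because $\mathcal{L}_1=\mathcal{L}_2$ already gives the equality of the two concrete corners.
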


\begin{proof}  We use the idea in \cite[p.\ 406-407]{DB2}
and \cite[p.\ 585-586]{BMN}. Let $H, K$ be as in the proof of
Theorem \ref{wiscs}.  We consider the following subalgebras of $B(H
\oplus K)$:
 $$  \left[
\begin{array}{ccl}
W^*_{\rm max}(M) & W^*_{\rm max}(M) X \\
Y W^*_{\rm max}(M) & Y W^*_{\rm max}(M) X
\end{array}
\right]  \; \; \; \; ,  \; \; \; \;  \left[
\begin{array}{ccl}
X W^*_{\rm max}(N) Y & X W^*_{\rm max}(N)  \\
 W^*_{\rm max}(N) Y &  W^*_{\rm max}(N)
\end{array}
\right] \; .
   $$
Let ${\mathcal L}_1$ and ${\mathcal L}_2$ denote the weak* closures
of these two subalgebras.   These are dual operator algebras which
are the linking algebras for a Morita equivalence of the type in the
present paper. Thus by Theorem \ref{alsodin}, they are actually
selfadjoint. Moreover both of these can now be seen to equal the von
Neumann algebra generated by ${\mathcal L}^w$, and so they are equal
to each other.    Now it is clear that, for example, the weak*
closures of $Y W^*_{\rm max}(M)$ and $W^*_{\rm max}(N) Y$ coincide,
and this constitutes an equivalence bimodule (or WTRO) setting up a
$W^{*}$-algebraic Morita equivalence between $W^{*}_{\rm max}(M)$
and $W^{*}_{\rm max}(N)$. The $W^{*}$-algebraic linking algebra here
is just ${\mathcal L}_1 = {\mathcal L}_2$, and this clearly contains
the algebra we called ${\mathcal R}$ in the discussion in the
beginning of Section 4, that is, ${\mathcal L}^w$, as a subalgebra.

Finally, notice that the map $\phi$ in the proof of the last theorem
is a completely isometric $W^*_{\rm max}(M)$-module map from $Z =  Y
\otimes^{\sigma h}_M \, W^*_{\rm max}(M)$ onto the weak* closure $W$
of $Y W^*_{\rm max}(M)$ in $B(H,K)$. Similar considerations, or
symmetry, shows that $V = W^*_{\rm max}(N) \otimes^{\sigma h}_N Y$
agrees with the weak* closure of $W^*_{\rm max}(N) Y$, which by the
above equals $W$, and thus agrees with $Z$.  Similarly for the
modules involving $X$.
\end{proof}

{\bf Remark.} \  Theorems 4 and 5 of \cite{BMN} have obvious variants
valid in our setting, with arbitrary $W^*$-dilations in place
of $W^*_{\rm max}(M)$.   Similarly, one can show as in
\cite{BMN} that $W^*_{\rm max}({\mathcal
L}^w) = {\mathcal L}_1$.  See \cite{UK} for details.

\medskip

{\bf Acknowledgements.}  The present paper is a second version of a 18 page
preprint distributed
on September 7, 2007, which only discussed `weak Morita equivalence'.   
Some days after this, we realized that all of the results 
and nearly all of the proofs worked
 in the more general setting of weak* Morita equivalence, 
and an update was immediately distributed informally.       The present version
(September 24, 2007) is a merging of the original paper and this update.

Just before submitting this revision, Vern Paulsen suggested we look again at
Example 3.7 in \cite{Enew} (which was previously part of \cite{Elef1}), and 
indeed this is clearly a weak Morita equivalence (see Example (10) in Section 3)
which is not a weak* stable isomorphism (by  \cite{Enew,EP}).   Thus Eleftherakis'
`Delta-equivalence' and the relations considered in our paper are distinct;
 each seem to have their own distinct
 advantages (see for
example the discussion on the first page of our paper).

\end{document}